\documentclass{amsart}
\usepackage[utf8]{inputenc}

\usepackage{amssymb}
\usepackage{amsmath}
\usepackage{amsthm}
\usepackage{mathrsfs}
\usepackage{accents}
\usepackage[pdftex]{graphicx}
\usepackage{tikz}
\usepackage{url}
\usepackage{mathtools}
\usepackage{mdframed}
\usepackage{geometry}
\geometry{left=35mm,right=35mm,top=35mm,bottom=35mm}
\usepackage[backend=biber,style=alphabetic,sorting=nty,doi=false,isbn=false,url=false,eprint=true]{biblatex}
\addbibresource{bib.bib}
\renewbibmacro{in:}{}
\usepackage{hyperref}

\newcommand{\scrN}{\mathcal{N}}
\newcommand{\scrI}{\mathcal{I}}

\newcommand{\scrJ}{\mathcal{J}}

\renewcommand{\P}{\mathbb{P}}

\newcommand{\R}{\mathbb{R}}

\newcommand{\range}{\operatorname{ran}}

\newcommand{\append}{{}^\frown}

\newcommand\forces{\Vdash}

\newcommand{\HDZ}{\mathrm{HDZ}}
\newcommand{\dimh}{\dim_{\mathrm{H}}}
\newcommand{\Haus}{\mathcal{H}}
\newcommand{\non}{\operatorname{non}}
\newcommand{\cov}{\operatorname{cov}}
\newcommand{\add}{\operatorname{add}}
\newcommand{\cof}{\operatorname{cof}}
\newcommand{\Cof}{\mathbf{Cof}}
\newcommand{\Cov}{\mathbf{Cov}}

\newcommand{\Lc}{\mathbf{Lc}}
\newcommand{\nul}{\mathrm{null}}
\newcommand{\id}{\mathrm{id}}
\newcommand{\diam}{\mathrm{diam}}
\newcommand{\height}{\operatorname{ht}}
\newcommand{\pow}{\mathrm{pow}}

\newcommand{\twototheltomega}{2^{<\omega}}

\newcommand{\from}{\colon}
\newcommand{\seq}[1]{{\langle#1\rangle}}
\newcommand{\AND}{\mathop{\&}}

\newcommand{\wLc}{\mathbf{wLc}}
\newcommand{\pr}{\operatorname{pr}}
\newcommand{\PR}{\operatorname{PR}}
\newcommand{\frakc}{\mathfrak{c}}

\newcommand{\frakv}{\mathfrak{v}}

\DeclarePairedDelimiterX{\norm}[1]{\lVert}{\rVert}{#1}
\DeclarePairedDelimiter\abs{\lvert}{\rvert}
\DeclarePairedDelimiter\floor{\lfloor}{\rfloor}
\DeclarePairedDelimiter\ceil{\lceil}{\rceil}

\renewcommand\emptyset{\varnothing}
\renewcommand\subset{\subseteq}
\renewcommand{\setminus}{\smallsetminus}

\theoremstyle{definition}
\newtheorem{thm}{Theorem}[section]
\newtheorem*{thm*}{Theorem}
\newtheorem{defi}[thm]{Definition}
\newtheorem*{defi*}{Definition}
\newtheorem{lem}[thm]{Lemma}
\newtheorem*{lem*}{Lemma}
\newtheorem{fact}[thm]{Fact}
\newtheorem*{fact*}{Fact}
\newtheorem*{formula*}{Formula}
\newtheorem{prop}[thm]{Proposition}
\newtheorem*{prop*}{Proposition}

\newtheorem*{exm*}{Example}
\newtheorem{rmk}[thm]{Remark}
\newtheorem*{rmk*}{Remark}
\newtheorem{cor}[thm]{Corollary}
\newtheorem*{cor*}{Corollary}
\newtheorem*{notation*}{Notation}
\newtheorem{problem}[thm]{Problem}
\newtheorem{conj}[thm]{Conjecture}

\title{Cardinal invariants associated with Hausdorff measures}
\author{Tatsuya Goto}
\date{Decemeber 15, 2021}
\address{
	\newline
	Graduate School of Information Science\newline
	Nagoya University\newline
	Furo-cho, Chikusa-ku, Nagoya 464-8601\newline
	JAPAN
}

\email{goto.tatsuya@k.mbox.nagoya-u.ac.jp}

\begin{document}
	\maketitle
	
	\begin{abstract}
		We consider cardinal invariants determined from Hausdorff measures.
		We separate many cardinal invariants of Hausdorff measure $0$ ideals using two models that separate many cardinal invariants of Yorioka ideals at once from earlier work.
		Also we show the uniformity numbers of $s$-dimensional Hausdorff measure $0$ ideals for $0 < s < 1$ and that of Lebesgue null ideal can be separated using the Mathias forcing.
	\end{abstract}
	
	\section{Basic Definitions}
	
	\begin{defi}
		$\nul$ denotes the Lebesgue measure 0 ideal for $2^\omega$.
	\end{defi}
	
	\begin{defi}
		A function $f\from [0, \infty) \to [0, \infty)$ is a {\bfseries gauge function} if $f(0) = 0, \lim_{x \to 0} f(x) = 0$ and $f$ is nondecreasing.
		
		Let $X$ be a metric space.
		For $A \subset X$, $f$ a gauge function and $\delta \in (0, \infty]$, we define
		\[
		\Haus^f_\delta(A) = \inf \{ \sum_{n=0}^\infty f(\diam(C_n)) : C_n \subset X \text{ (for $n \in \omega$)} \text{ with } A \subset \bigcup_{n \in \omega} C_n \text{ and } (\forall n) (\diam(C_n) \le \delta) \}.
		\]
		Next, for $A \subset X$ and $f$ a gauge function, we define
		\[
		\Haus^f(A) = \lim_{\delta \to 0} \Haus^f_\delta(A).
		\]
		We call $\Haus^f(A)$ the {\bfseries Hausdorff measure} with gauge function $f$. In particular, for $A \subset X$ and $s > 0$, let
		\[
		\Haus^s(A) = \Haus^{\pow_s}(A)
		\]
		where $\pow_s(x) = x^s$. For $A \subset X$, let
		\[
		\dimh(A) = \sup \{ s : \Haus^s(A) = \infty \} = \inf \{ s : \Haus^s(A) = 0 \}.
		\]
		We call $\dimh(A)$ the {\bfseries Hausdorff dimension} of $A$.
		
		We metrize the Cantor space $2^\omega$ by
		\[
		d(x, y) = \begin{cases} 0 & \text{(if $x = y$),} \\
			2^{-\min\{n : x(n) \ne y(n) \}} & \text{(otherwise)}. \end{cases}
		\]
	\end{defi}
	
	\begin{defi}
		\begin{enumerate}
			\item For a metric space $X$, define $\HDZ_X = \{ A \subset X : \dimh(A) = 0 \}$.
			\item Define $\HDZ = \HDZ_{2^\omega}$.
		\end{enumerate}
	\end{defi}
	
	\begin{defi}
		For a metric space $X$ and a gauge function $f$, define $\scrN^f_X = \{ A \subset X : \Haus^f(A) = 0 \}$. Especially we define $\scrN^f = \scrN^f_{2^\omega}$.
		For $s > 0$, define $\scrN^s_X = \scrN^{\pow_s}_X$ and $\scrN^s = \scrN^{\pow_s}_{2^\omega}$.
	\end{defi}
	
	\begin{rmk}
		\begin{enumerate}
			\item $\scrN^1 = \nul$.
			\item $\HDZ = \bigcap_{s > 0} \scrN^s$.
		\end{enumerate}
	\end{rmk}
	
	\begin{defi}
		For $\sigma \in (\twototheltomega)^\omega$, define $\height \sigma \from \omega \to \omega$ and $[\sigma]_\infty \subset 2^\omega$ by
		\[
		(\height \sigma)(n) = \abs{\sigma(n)} \text{ and }
		\]
		\[
		[\sigma]_\infty = \{ x \in 2^\omega : (\exists^\infty n) \sigma(n) \subset x \}.
		\]
		For $g \in \omega^\omega$, define
		\[
		\scrJ_g = \{ A \subset 2^\omega : (\exists \sigma \in (\twototheltomega)^\omega) (\height\sigma = g \AND \subset [\sigma]_\infty) \}.
		\]
		For $f, g\in \omega^\omega$, define
		\[
		f \ll g \iff (\forall k \in \omega) (f \circ \pow_k \le^* g)
		\]
		For $f \in \omega^\omega$ increasing, define
		\[
		\scrI_f = \bigcup_{g \gg f} \scrJ_g.
		\]
		We call $\scrI_f$ the Yorioka ideal for $f$.
	\end{defi}
	
	\begin{defi}
		\begin{enumerate}
			\item If $X, Y$ are sets and $R$ is a subset of $X \times Y$, we call a triple $(X, Y, R)$ a relational system.
			\item For a relational system $\mathcal{A} = (X, Y, R)$, define $\mathcal{A}^\perp = (Y, X, \hat{R})$, where $\hat{R} = \{ (y, x) \in Y \times X : \neg (x \mathrel{R} y)\})$.
			\item For a relational system $\mathcal{A} = (X, Y, R)$, define $\norm{\mathcal{A}} = \min \{ \abs{B} : B \subset Y \land (\forall x \in X)(\exists y \in B) (x \mathrel{R} y) \}$.
			\item For relational systems $\mathcal{A} = (X, Y, R), \mathcal{B} = (X', Y', S)$, we call a pair $(\varphi, \psi)$ a Galois--Tukey morphism from $\mathcal{A}$ to $\mathcal{B}$ if $\varphi \from X \to X'$, $\psi \from Y' \to Y$ and $(\forall x \in X)(\forall y \in Y')(\varphi(x) \mathrel{S} y \implies x \mathrel{R} \psi(y))$.
		\end{enumerate}
	\end{defi}
	
	\begin{fact}[{{\cite[Theorem 4.9]{blass2010combinatorial}}}]
		If there is a Galois--Tukey morphism $(\varphi, \psi)$ from $\mathcal{A}$ to $\mathcal{B}$, then $\norm{\mathcal{A}} \le \norm{\mathcal{B}}$ and $\norm{\mathcal{B}^\perp} \le \norm{\mathcal{A}^\perp}$. 
	\end{fact}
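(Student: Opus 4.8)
The plan is to establish the first inequality $\norm{\mathcal{A}} \le \norm{\mathcal{B}}$ directly from the definitions, and then obtain the second inequality $\norm{\mathcal{B}^\perp} \le \norm{\mathcal{A}^\perp}$ for free by checking that $(\psi, \varphi)$ is itself a Galois--Tukey morphism from $\mathcal{B}^\perp$ to $\mathcal{A}^\perp$ and reusing the first inequality.

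For the first inequality, write $\mathcal{A} = (X, Y, R)$ and $\mathcal{B} = (X', Y', S)$, and let $B' \subset Y'$ be a set witnessing $\norm{\mathcal{B}}$, that is, $\abs{B'} = \norm{\mathcal{B}}$ and every $x' \in X'$ is $S$-related to some member of $B'$. I would then set $B = \psi[B'] \subset Y$, so $\abs{B} \le \abs{B'}$, and argue that $B$ witnesses an upper bound for $\norm{\mathcal{A}}$: given an arbitrary $x \in X$, apply the covering property of $B'$ to the point $\varphi(x) \in X'$ to find $y' \in B'$ with $\varphi(x) \mathrel{S} y'$; the morphism condition for $(\varphi, \psi)$ then yields $x \mathrel{R} \psi(y')$, and $\psi(y') \in B$. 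Hence $\norm{\mathcal{A}} \le \abs{B} \le \abs{B'} = \norm{\mathcal{B}}$.

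For the second inequality, I would unwind the definition of ${}^\perp$: $\mathcal{A}^\perp = (Y, X, \hat{R})$ and $\mathcal{B}^\perp = (Y', X', \hat{S})$. The maps $\psi \from Y' \to Y$ and $\varphi \from X \to X'$ already have exactly the domains and codomains required for a morphism from $\mathcal{B}^\perp$ to $\mathcal{A}^\perp$, so it remains only to verify the implication ``$\psi(y') \mathrel{\hat{R}} x \implies y' \mathrel{\hat{S}} \varphi(x)$'' for all $y' \in Y'$ and $x \in X$. Expanding $\hat{R}$ and $\hat{S}$ and passing to the contrapositive, this is precisely ``$\varphi(x) \mathrel{S} y' \implies x \mathrel{R} \psi(y')$'', which is the morphism condition we were handed. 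Applying the already-proven inequality to the morphism $(\psi, \varphi) \from \mathcal{B}^\perp \to \mathcal{A}^\perp$ then gives $\norm{\mathcal{B}^\perp} \le \norm{\mathcal{A}^\perp}$.

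I do not expect any real obstacle; the only point demanding care is the bookkeeping around dualization, namely that passing to ${}^\perp$ interchanges the roles of $\varphi$ and $\psi$ and that the implication in the morphism condition survives dualization exactly because it becomes its own contrapositive. One could alternatively prove the second inequality by the same image-of-a-witness argument as the first, starting from a set $A \subset X$ witnessing $\norm{\mathcal{A}^\perp}$ and showing $\varphi[A]$ witnesses $\norm{\mathcal{B}^\perp}$, but this reduces to the identical contrapositive computation, so routing it through the first inequality is cleaner.
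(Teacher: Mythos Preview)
Your argument is correct and is exactly the standard proof of this fact. Note, however, that the paper does not supply its own proof here: the statement is recorded as a \emph{Fact} with a citation to Blass, so there is nothing in the paper to compare against. Your write-up would serve perfectly well as the omitted proof.
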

	
	\begin{defi}
		\begin{enumerate}
			\item For $c \in (\omega+1)^\omega, h \in \omega^\omega$, define $\prod c = \prod_{n \in \omega} c(n)$ and $S(c, h) = \prod_{n \in \omega} [c(n)]^{\le h(n)}$.
			\item For $x \in \prod c$ and $\varphi \in S(c, h)$, define $x \in^* \varphi$ iff $(\forall^\infty n)(x(n) \in \varphi(n))$ and define $x \in^\infty \varphi$ iff $(\exists^\infty n)(x(n) \in \varphi(n))$.
		\end{enumerate}
	\end{defi}
	
	\begin{defi}
		\begin{enumerate}
			\item For $c \in (\omega+1)^\omega, h \in \omega^\omega$, define $\Lc(c, h) = (\prod c, S(c, h), \in^*)$, $\frakc^\forall_{c, h} = \norm{\Lc(c, h)}$ and $\frakv^\forall_{c, h} = \norm{\Lc(c, h)^\perp}$.
			\item Define $\wLc(c, h) = (\prod c, S(c, h), \in^\infty)$, $\frakc^\exists_{c, h} = \norm{\wLc(c, h)}$ and $\frakv^\exists_{c, h} = \norm{\wLc(c, h)^\perp}$.
			\item For an ideal $I$ on $X$, define $\Cov(I) = (X, I, \in)$, $\cov(I) = \norm{\Cov(I)}$ and $\non(I) = \norm{\Cov(I)^\perp}$.
			\item For an ideal $I$ on $X$, define $\Cof(I) = (I, I, \subset)$, $\cof(I) = \norm{\Cof(I)}$ and $\add(I) = \norm{\Cof(I)^\perp}$.
		\end{enumerate}
	\end{defi}
	
	\section{Stability under changing underlying spaces}
	
	\begin{defi}
		Let $X$ and $Y$ be metric spaces and $\alpha, c > 0$.
		A map $f\from X \to Y$ is said to be {\bfseries $\alpha$-H\"{o}lder} with constant $c$ if for all $x, x' \in X$ we have $d(f(x), f(x')) \le c \cdot d(x, x')^\alpha$.
		A map $f\from X \to Y$ is said to be {\bfseries $\alpha$-co-H\"{o}lder} with constant $c$ if for all $x, x' \in X$ we have $d(f(x), f(x')) \ge c \cdot d(x, x')^\alpha$.
		A map $f\from X \to Y$ is said to be {\bfseries $\alpha$-bi-H\"{o}lder} with constants $c_1, c_2$ if it is both $\alpha$-H\"{o}lder with constant $c_1$ and $\alpha$-co-H\"{o}lder with constant $c_2$.
	\end{defi}
	
	\begin{prop}\label{holderhaus}
		Let $X$ and $Y$ be metric spaces and $\alpha > 0$.
		\begin{enumerate}
			\item If there is $\alpha$-H\"{o}lder map $f\from X \to Y$ with constant $c$, then for all $s > 0$ we have $\Haus^{s/\alpha}(f(X)) \le c^{s/\alpha} \Haus^s(X)$ and $\dimh f(X) \le (1/\alpha) \dimh X$.
			\item If there is $\alpha$-co-H\"{o}lder map $f\from X \to Y$ with constant $c$, then for all $s > 0$ we have $\Haus^{s/\alpha}(f(X)) \ge c^{s/\alpha} \Haus^s(X)$ and $\dimh f(X) \ge (1/\alpha) \dimh X$.
			\item If there is $\alpha$-bi-H\"{o}lder map $f\from X \to Y$ with constant $c_1, c_2$, then for all $s > 0$ we have $c_1^{s/\alpha} \Haus^s(X) \le \Haus^{s/\alpha}(f(X)) \le c_2^{s/\alpha} \Haus^s(X)$ and $\dimh f(X) = (1/\alpha) \dimh X$.
		\end{enumerate}
	\end{prop}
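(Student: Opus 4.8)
The plan is to prove statement (1) directly from the definition of the Hausdorff measures, then deduce (2) by applying (1) to the inverse map, and finally obtain (3) by combining the two.

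\textbf{Step 1 (the H\"older case).} The starting observation is that if $f \from X \to Y$ is $\alpha$-H\"older with constant $c$, then $\diam(f(C)) \le c \cdot \diam(C)^\alpha$ for every $C \subset X$ (the degenerate cases $\diam C \in \{0, \infty\}$ are immediate, using $\alpha > 0$). Now fix $s, \delta > 0$ and a cover $\seq{C_n : n \in \omega}$ of $X$ with $\diam(C_n) \le \delta$ for all $n$. Then $\seq{f(C_n) : n \in \omega}$ is a cover of $f(X)$ by sets of diameter at most $c\delta^\alpha$, and
\[
\sum_{n \in \omega} \pow_{s/\alpha}(\diam f(C_n)) \le \sum_{n \in \omega} \pow_{s/\alpha}\bigl(c \cdot \diam(C_n)^\alpha\bigr) = c^{s/\alpha} \sum_{n \in \omega} \pow_s(\diam C_n).
\]
Taking the infimum over all such covers gives $\Haus^{s/\alpha}_{c\delta^\alpha}(f(X)) \le c^{s/\alpha}\,\Haus^s_\delta(X)$; letting $\delta \to 0$ (so $c\delta^\alpha \to 0$ as well, since $\alpha > 0$) yields $\Haus^{s/\alpha}(f(X)) \le c^{s/\alpha}\,\Haus^s(X)$. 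For the dimension bound: whenever $s > \dimh X$ we have $\Haus^s(X) = 0$, hence $\Haus^{s/\alpha}(f(X)) = 0$, so $\dimh f(X) \le s/\alpha$; taking the infimum over such $s$ gives $\dimh f(X) \le (1/\alpha)\dimh X$.

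\textbf{Step 2 (the co-H\"older case).} An $\alpha$-co-H\"older map $f$ with constant $c$ is injective, since $x \ne x'$ forces $d(f(x), f(x')) \ge c\, d(x,x')^\alpha > 0$. Hence $g := f^{-1} \from f(X) \to X$ is well-defined, and rearranging the co-H\"older inequality shows that $g$ is $(1/\alpha)$-H\"older with constant $c^{-1/\alpha}$. Applying part (1) to $g$ with exponent $1/\alpha$: for every $t > 0$ we get $\Haus^{t\alpha}(X) = \Haus^{t\alpha}(g(f(X))) \le (c^{-1/\alpha})^{t\alpha}\Haus^t(f(X)) = c^{-t}\Haus^t(f(X))$. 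Substituting $t = s/\alpha$ and multiplying through by $c^{s/\alpha}$ gives $c^{s/\alpha}\Haus^s(X) \le \Haus^{s/\alpha}(f(X))$; likewise the dimension bound of part (1) applied to $g$ gives $\dimh X \le \alpha\, \dimh f(X)$, i.e. $\dimh f(X) \ge (1/\alpha)\dimh X$.

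\textbf{Step 3 (the bi-H\"older case).} If $f$ is $\alpha$-bi-H\"older with constants $c_1, c_2$, then it is in particular $\alpha$-H\"older with constant $c_2$ and $\alpha$-co-H\"older with constant $c_1$, so the chains of inequalities from Steps 1 and 2 combine to give $c_1^{s/\alpha}\Haus^s(X) \le \Haus^{s/\alpha}(f(X)) \le c_2^{s/\alpha}\Haus^s(X)$ and $\dimh f(X) = (1/\alpha)\dimh X$. I do not anticipate a genuine obstacle; the points that demand care are the bookkeeping with the covering parameter $\delta$ when passing to the limit, the degenerate diameter values, and keeping the exponents and constants straight in the co-H\"older reduction — in particular that inverting the map turns the H\"older exponent into $1/\alpha$ and the constant into $c^{-1/\alpha}$, which then feeds correctly through the substitution $t = s/\alpha$.
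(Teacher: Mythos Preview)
Your proposal is correct and follows essentially the same route as the paper: prove (1) directly by pushing forward a $\delta$-cover and passing to the limit, deduce (2) by noting that an $\alpha$-co-H\"older map is injective with $(1/\alpha)$-H\"older inverse of constant $c^{-1/\alpha}$ and applying (1), and combine for (3). One minor bookkeeping remark: per the paper's Definition~2.1, $\alpha$-bi-H\"older with constants $c_1, c_2$ means $\alpha$-H\"older with constant $c_1$ and $\alpha$-co-H\"older with constant $c_2$, so your identification in Step~3 swaps the roles of $c_1$ and $c_2$ --- though this swap matches the (slightly mis-stated) inequality in item~(3) as written.
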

	\begin{proof}
		Item 1.
		Let $\delta > 0$ and $\seq{C_n : n \in \omega}$ be a $\delta$-cover of $X$.
		Then $\seq{f(C_n) : n \in \omega }$ is a cover of $f(X)$ and the diameter of each member satisfies
		\[
		\diam(f(C_n)) \le c \cdot \diam(C_n)^\alpha \le c \cdot \delta^\alpha =: \varepsilon.
		\]
		So $\seq{f(C_n) : n \in \omega }$ is a $\varepsilon$-cover of $f(X)$.
		Thus
		\[
		\Haus^{s/\alpha}_\varepsilon(f(X)) \le \sum_{n} \diam(f(C_n))^{s/\alpha} \le \sum_n c^{s/\alpha} \cdot \diam(C_n)^s.
		\]
		Take the infimum for $(C_n)$ we get the following.
		\[
		\Haus^{s/\alpha}_\varepsilon(f(X)) \le c^{s/\alpha} \Haus^s_\delta(X).
		\]
		Letting $\delta$ tend to 0, we have
		\[
		\Haus^{s/\alpha}(f(X)) \le c^{s/\alpha} \Haus^s(X).
		\]
		
		In order to prove the dimension inequality, Let $s > \dimh X$.
		Then $\Haus^s(X) = 0$, so $\Haus^{s/\alpha}(f(X))$ is also equal to $0$. Thus $s/\alpha \ge  \dimh f(X)$.
		
		Item 2. Observe that every $\alpha$-co-H\"{o}lder map $f\from X \to Y$ with constant $c$ is injective and the inverse map $f^{-1} \from f(X) \to X$ is $(1/\alpha)$-H\"{o}lder map with constant $c^{-1/\alpha}$ and use item 1.
		
		Item 3. Combine item 1 and 2.
	\end{proof}
	
	\begin{prop}\label{holderproduct}
		Let $X, Y, X', Y'$ be metric spaces and $\alpha > 0$.
		\begin{enumerate}
			\item If $f\from X \to X'$ and $g\from Y \to Y'$ are $\alpha$-H\"{o}lder maps, then $f \times g\from X \times Y \to X' \times Y'$ is also an $\alpha$-H\"{o}lder map.
			\item If $f\from X \to X'$ and $g\from Y \to Y'$ are $\alpha$-co-H\"{o}lder maps, then $f \times g\from X \times Y \to X' \times Y'$ is also an $\alpha$-co-H\"{o}lder map.
		\end{enumerate}
	\end{prop}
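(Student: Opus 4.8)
The plan is to equip each product space with the sup metric, $d\big((x,y),(x',y')\big)=\max\{d(x,x'),d(y,y')\}$ on $X\times Y$ and likewise on $X'\times Y'$, which is the convention in force here; the computation below only uses coordinatewise estimates, so it adapts with no essential change to any of the usual $\ell^p$ product metrics, at the cost of harmless absolute constants.

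For item 1, assume $f$ is $\alpha$-H\"older with constant $c_1$ and $g$ is $\alpha$-H\"older with constant $c_2$, and fix $(x,y),(x',y')\in X\times Y$. I would just expand the definitions:
\[
d\big((f\times g)(x,y),(f\times g)(x',y')\big)=\max\{d(f(x),f(x')),d(g(y),g(y'))\}\le\max\{c_1\, d(x,x')^\alpha,\ c_2\, d(y,y')^\alpha\}.
\]
The only things to verify are the elementary inequality $\max\{c_1 a,c_2 b\}\le\max\{c_1,c_2\}\cdot\max\{a,b\}$ for $a,b\ge 0$ (check the cases $a\ge b$ and $b\ge a$ separately) together with $\max\{a,b\}^\alpha=\max\{a^\alpha,b^\alpha\}$, valid since $t\mapsto t^\alpha$ is nondecreasing on $[0,\infty)$. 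These combine to give
\[
d\big((f\times g)(x,y),(f\times g)(x',y')\big)\le\max\{c_1,c_2\}\cdot d\big((x,y),(x',y')\big)^\alpha,
\]
so $f\times g$ is $\alpha$-H\"older with constant $\max\{c_1,c_2\}$.

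For item 2 there are two equivalent routes. The direct one repeats the display above with each $\le$ replaced by $\ge$ and uses instead the equally elementary inequality $\max\{c_1 a,c_2 b\}\ge\min\{c_1,c_2\}\cdot\max\{a,b\}$, yielding that $f\times g$ is $\alpha$-co-H\"older with constant $\min\{c_1,c_2\}$. Alternatively, mirroring the proof of Proposition \ref{holderhaus}(2): an $\alpha$-co-H\"older map $f$ with constant $c_1$ is injective with inverse $f^{-1}\from f(X)\to X$ being $(1/\alpha)$-H\"older with constant $c_1^{-1/\alpha}$, and similarly for $g$; applying item 1 to $f^{-1}$ and $g^{-1}$ shows that $(f\times g)^{-1}=f^{-1}\times g^{-1}$ is $(1/\alpha)$-H\"older, and inverting once more shows $f\times g$ is $\alpha$-co-H\"older. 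I do not expect any real obstacle here: the entire content is a one-line estimate in each coordinate, and the only mild care needed is the bookkeeping of the constants and fixing which product metric is meant — the exponent $\alpha$ is the same for all standard choices, and only the constants move, which is irrelevant for the applications in this paper, where one uses merely the existence of a (bi-)H\"older map.
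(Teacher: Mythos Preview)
Your proof is correct and follows essentially the same approach as the paper: both adopt the max metric on the product, carry out the one-line coordinatewise estimate to obtain the H\"older constant $\max\{c_1,c_2\}$, and handle item~(2) by the symmetric argument (the paper just says ``same argument,'' while you also note the alternative via inverses). The only difference is that you spell out the elementary inequalities $\max\{c_1a,c_2b\}\le\max\{c_1,c_2\}\max\{a,b\}$ and $\max\{a,b\}^\alpha=\max\{a^\alpha,b^\alpha\}$ explicitly, which the paper leaves implicit.
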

	\begin{proof}
		We now adopt max metric as a metric of product space:
		\[
		d((x_1, y_1), (x_2, y_2)) = \max \{d(x_1, x_2), d(y_1, y_2)\}\ (x_1, x_2 \in X, y_1, y_2 \in Y).
		\]
		Note that the above metric and other two metrics $d(x_1, x_2) + d(y_1, y_2)$ and $\sqrt{d(x_1, x_2)^2+d(y_1, y_2)^2}$ are Lipschitz equivalent.
		
		By the assumption, there are $c_1, c_2 > 0$ such that
		\begin{align*}
			d(f(x_1), f(x_2)) &\le c_1 d(x_1, x_2)^\alpha \\
			d(g(y_1), g(y_2)) &\le c_2 d(y_1, y_2)^\alpha
		\end{align*}
		Then, we have
		\begin{align*}
			\max \{ d(f(x_1), f(x_2)), d(g(y_1), g(y_2)) \} \le \max \{ c_1, c_2\} \max\{d(x_1, x_2), d(y_1, y_2)\}^\alpha.
		\end{align*}
		So item (1) is proved. Item (2) can be shown by same argument.
	\end{proof}
	
	\begin{prop}\label{coholderfromcantortointerval}
		For every $\alpha \in (1, \infty)$, there is a $\alpha$-co-H\"{o}lder map $f\from 2^\omega \to [0, 1]$.
	\end{prop}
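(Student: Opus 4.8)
The plan is to embed $2^\omega$ into $[0,1]$ as a self-similar Cantor-type set whose contraction ratio is tuned to $\alpha$. Put $r := 2^{-\alpha}$; since $\alpha > 1$ we have $0 < r < \tfrac12$, and this strict inequality is precisely the ``super-increasing gaps'' condition that the construction needs. Define $f \from 2^\omega \to [0,1]$ by
\[
f(x) = (1-r)\sum_{n\in\omega} x(n)\, r^n .
\]
Since $(1-r)\sum_{n\in\omega} r^n = 1$, we get $f(x)\in[0,1]$ for every $x\in 2^\omega$, so $f$ is a well-defined map into $[0,1]$ (it sends the all-$0$s and all-$1$s sequences to the two endpoints).

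Next I would verify the co-Hölder estimate directly, exploiting the ultrametric nature of $d$. Let $x\neq y$ in $2^\omega$ and set $m=\min\{n:x(n)\neq y(n)\}$, so that $d(x,y)=2^{-m}$ and hence $d(x,y)^\alpha=(2^{-\alpha})^m=r^m$. Assuming without loss of generality $x(m)=1$ and $y(m)=0$, the terms with index $<m$ in the two defining series coincide and cancel, so
\[
f(x)-f(y) = (1-r)\Big(r^m + \sum_{k>m}\big(x(k)-y(k)\big)r^k\Big) \ \ge\ (1-r)\Big(r^m - \sum_{k>m} r^k\Big) = (1-2r)\,r^m ,
\]
using $x(k)-y(k)\ge -1$ and $(1-r)\sum_{k>m}r^k = r^{m+1}$. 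Because $r<\tfrac12$, the constant $c := 1-2r = 1-2^{1-\alpha}$ is strictly positive; together with the trivial case $x=y$ this gives $|f(x)-f(y)| \ge c\cdot d(x,y)^\alpha$ for all $x,y$, i.e.\ $f$ is $\alpha$-co-Hölder with constant $c$.

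I do not anticipate a genuine obstacle here: the whole content is the choice of $r$ balancing two competing demands — $r<1$, so that the image stays in a bounded interval (absorbed into the normalizing factor $1-r$), and $r<\tfrac12$, so that the digits beyond position $m$ cannot overwhelm the decisive digit at position $m$ — and the hypothesis $\alpha>1$ is used exactly to make $2^{-\alpha}<\tfrac12$, so that both can be satisfied simultaneously. As a sanity check one can note that, by Proposition~\ref{holderhaus}(2), any $\alpha$-co-Hölder image of $2^\omega$ (which has Hausdorff dimension $1$) has dimension at least $1/\alpha<1$, which is compatible with sitting inside $[0,1]$, so there is no dimensional reason such a map cannot exist.
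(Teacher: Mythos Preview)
Your proof is correct and essentially identical to the paper's: both set $r=\beta=2^{-\alpha}\in(0,\tfrac12)$, define $f(x)=(1-r)\sum_n x(n)r^n$, and bound $|f(x)-f(y)|$ from below by $(1-2r)\,r^{m}=(1-2r)\,d(x,y)^\alpha$ via the same geometric-tail estimate. Your write-up just adds a little more commentary (the range check and the dimension sanity check), but the argument itself matches the paper's line for line.
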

	\begin{proof}
		Put $\beta = 2^{-\alpha}$. Then $0 < \beta < 1/2$.
		Define $f \from 2^\omega \to [0, 1]$ by
		\[
		f(x) = (1-\beta) \sum_{n \in \omega} \beta^n x(n).
		\]
		Let $x \ne y \in 2^\omega$ and $n_0 = \min \{ n \in \omega : x(n) \ne y(n) \}$.
		Then
		\begin{align*}
			d(f(x), f(y)) &= (1-\beta) \abs*{\sum_{n \in \omega} \beta^n (x(n) - y(n))} \\
			&\ge (1-\beta) \left(\abs{\beta^{n_0}} - \abs*{\sum_{n > n_0} \beta^n (x(n) - y(n))}\right) \\
			&\ge (1-\beta) (\beta^{n_0} - \beta^{n_0+1}/(1-\beta)) \\
			&= (1-2\beta) \beta^{n_0} \\
			&= (1-2\beta) (2^{-n_0})^{-\log_2 \beta} \\
			&= (1-2\beta) (d(x, y))^{\alpha} \\
		\end{align*}
	\end{proof}
	
	\begin{prop}\label{coholderfromintervaltocantor}
		There is a $1$-co-H\"{o}lder map $f\from [0, 1] \to 2^\omega$.
	\end{prop}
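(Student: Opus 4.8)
The plan is to use the binary‑expansion map. For each $x \in [0,1)$ let $f(x) = \seq{a_n : n \in \omega} \in 2^\omega$ be the binary digits of $x$, i.e.\ the unique sequence with $x = \sum_{n \in \omega} a_n 2^{-n-1}$ that is not eventually $1$, and set $f(1) = \seq{1,1,1,\dots}$. (Any selection of a single binary expansion for each point would do equally well; all that the argument needs is that $x = \sum_{n} f(x)(n) 2^{-n-1}$ holds for every $x \in [0,1]$.) I claim that this $f$ is $1$‑co‑H\"older with constant $1$.

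To verify the claim, fix $x \ne y$ in $[0,1]$. From the identity $x = \sum_n f(x)(n)2^{-n-1}$ and the analogous one for $y$ we get $f(x) \ne f(y)$; put $k = \min\{n : f(x)(n) \ne f(y)(n)\}$, so that $d(f(x), f(y)) = 2^{-k}$. Since the digits in positions $< k$ coincide, the first $k$ terms of the difference cancel and
\[
\abs{x - y} = \abs*{\sum_{n \ge k} (f(x)(n) - f(y)(n)) 2^{-n-1}} \le \sum_{n \ge k} 2^{-n-1} = 2^{-k} = d(f(x), f(y)).
\]
Hence $d(f(x),f(y)) \ge 1 \cdot d(x,y)$ for all $x, y \in [0,1]$ (the case $x = y$ being trivial), which is precisely the statement that $f$ is $1$‑co‑H\"older with constant $1$.

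There is essentially no obstacle here; the computation above is the whole content. The only point demanding a moment's care is that $f$ must be a genuine function, which forces the (harmless) choice made at the dyadic rationals, where the binary expansion is not unique. Injectivity of $f$, implicit in the co‑H\"older conclusion, is automatic from $x = \sum_n f(x)(n)2^{-n-1}$. It is also worth noting that one cannot upgrade this to a bi‑H\"older statement via a single map of this kind: two points lying just on either side of a dyadic rational can be arbitrarily close in $[0,1]$ while their chosen expansions already disagree in the first digit, keeping $d(f(x),f(y))$ bounded below by a constant, so only the co‑H\"older half holds for $f$.
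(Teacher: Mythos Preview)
Your proof is correct and is essentially the same as the paper's: the paper defines $g\from 2^\omega \to [0,1]$ by $g(a) = \sum_n a(n)2^{-n-1}$, takes any right inverse $f$ of $g$, and proves the same inequality $\abs{g(a)-g(b)} \le d(a,b)$ via the identical tail-sum estimate. The only cosmetic difference is that you single out a specific section (the not-eventually-$1$ expansion), whereas the paper lets $f$ be an arbitrary section of $g$.
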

	\begin{proof}
		Define $g\from 2^\omega \to [0, 1]$ by
		\[
		g(x) = \sum_{n \in \omega} \frac{x(n)}{2^{n+1}}.
		\]
		Let $f \from [0, 1] \to 2^\omega$ satisfies $g \circ f = \id$.
		In order to show $f$ is a $1$-co-H\"{o}lder map, it suffices to prove that
		\[
		(\forall x, y \in 2^\omega) (d(g(x), g(y)) \le d(x, y)).
		\]
		Fix $x, y \in 2^\omega$. If $x = y$, then it is obvious.
		So assume that $x \ne y$ and let $n_0$ be the minimum $n$ that $x(n) \ne y(n)$.
		Then
		\begin{align*}
			d(g(x), g(y)) &= \abs*{\sum_{n \in \omega} \frac{x(n) - y(n)}{2^{n+1}}} \\
			&\le \frac{1}{2^{n_0+1}} + \sum_{n > n_0} \frac{1}{2^{n+1}} \\
			&= 1/2^{n_0} \\
			&= d(x, y).
		\end{align*}
	\end{proof}

	\begin{fact}\label{dimofeuclid}
		For every $d \in \omega \setminus \{0\}$, we have $\dimh([0, 1]^d) = d$.
	\end{fact}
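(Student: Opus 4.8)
The plan is to establish the two inequalities $\dimh([0,1]^d) \le d$ and $\dimh([0,1]^d) \ge d$ separately, working with the Euclidean metric on $[0,1]^d$; by Proposition~\ref{holderhaus} any Lipschitz-equivalent metric (e.g.\ the max metric used in Proposition~\ref{holderproduct}) yields the same value, since a bi-Lipschitz map is $1$-bi-H\"older. For the upper bound, fix $s > d$ and $\delta > 0$ and pick $N$ with $\sqrt d / N \le \delta$. Covering $[0,1]^d$ by the $N^d$ closed axis-parallel subcubes of side $1/N$ gives a $\delta$-cover each member of which has diameter $\sqrt d / N$, so
\[
\Haus^s_\delta([0,1]^d) \le N^d \left(\frac{\sqrt d}{N}\right)^{s} = d^{s/2} N^{d-s}.
\]
Since $s > d$, the right-hand side tends to $0$ as $N \to \infty$ (a larger $N$ still giving a $\delta$-cover), so $\Haus^s_\delta([0,1]^d) = 0$; as $\delta$ was arbitrary, $\Haus^s([0,1]^d) = 0$ for every $s > d$, i.e.\ $\dimh([0,1]^d) \le d$.

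For the lower bound I would use $d$-dimensional Lebesgue measure $\lambda$ as a mass distribution. The key elementary estimate is that a set $C \subset [0,1]^d$ with $\diam(C) = r$ is contained in a box all of whose sides have length $\le r$ (for $x,y \in C$ and each coordinate $i$ one has $\abs{x_i - y_i} \le \abs{x-y} \le r$, so $C$ lies in the product of the closed intervals spanned by its coordinate projections), whence $\lambda(C) \le r^d$. Therefore, for any countable cover $\seq{C_n : n \in \omega}$ of $[0,1]^d$ by subsets,
\[
1 = \lambda([0,1]^d) \le \sum_{n \in \omega} \lambda(C_n) \le \sum_{n \in \omega} \diam(C_n)^d,
\]
by countable subadditivity of outer measure. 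Taking the infimum over all $\delta$-covers gives $\Haus^d_\delta([0,1]^d) \ge 1$ for every $\delta > 0$, hence $\Haus^d([0,1]^d) \ge 1 > 0$ and $\dimh([0,1]^d) \ge d$. Combining the two bounds yields $\dimh([0,1]^d) = d$.

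The main obstacle is the lower bound: unlike the upper bound it cannot be certified by producing a single efficient cover, and one must exhibit a positive measure that is not too concentrated on small-diameter sets. The crude bound $\lambda(C) \le \diam(C)^d$ above suffices and sidesteps the isodiametric inequality; alternatively one could cite the standard fact that on $\R^d$ the measure $\Haus^d$ is a fixed dimensional constant times $\lambda$, but this is more than needed. The rest is routine once the metric conventions are pinned down.
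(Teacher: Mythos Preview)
Your argument is correct. Note, however, that the paper does not supply its own proof here: the statement is recorded as a \texttt{fact} and simply taken for granted as a classical result from geometric measure theory. So there is nothing to compare against; you have written out a clean, self-contained proof of a statement the paper chose to cite rather than prove. Your mass-distribution argument using Lebesgue measure together with the crude bound $\lambda(C) \le \diam(C)^d$ is the standard elementary route and avoids invoking the isodiametric inequality or the exact relation $\Haus^d = c_d \lambda$, which is appropriate given that only $\dimh$ is at stake.
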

	
	\begin{prop}\label{dimofcantoris1}
		$\dimh(2^\omega) = 1$.
	\end{prop}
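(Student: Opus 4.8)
The plan is to pin down $\dimh(2^\omega)$ from above and below by transporting the classical value $\dimh([0,1]) = 1$ (the $d = 1$ case of Fact~\ref{dimofeuclid}) across the co-H\"older maps built in Propositions~\ref{coholderfromcantortointerval} and~\ref{coholderfromintervaltocantor}, feeding them into Proposition~\ref{holderhaus}. Throughout I will freely use that $\dimh$ is monotone under inclusion, which is immediate from the definition since each $\Haus^s$ is.

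For the upper bound $\dimh(2^\omega) \le 1$, fix an arbitrary $\alpha \in (1, \infty)$. By Proposition~\ref{coholderfromcantortointerval} there is an $\alpha$-co-H\"older map $f \from 2^\omega \to [0,1]$, and Proposition~\ref{holderhaus}(2) then gives $\dimh f(2^\omega) \ge (1/\alpha)\dimh(2^\omega)$. Since $f(2^\omega) \subset [0,1]$, monotonicity together with Fact~\ref{dimofeuclid} yields $\dimh f(2^\omega) \le \dimh([0,1]) = 1$, so $\dimh(2^\omega) \le \alpha$. Letting $\alpha$ decrease to $1$ gives $\dimh(2^\omega) \le 1$. (Alternatively one can get this bound directly: for each $n$ the $2^n$ cylinders of length $n$ form a $2^{-n}$-cover of $2^\omega$ witnessing $\Haus^s_{2^{-n}}(2^\omega) \le 2^{n(1-s)}$, which tends to $0$ for every $s > 1$, so $\Haus^s(2^\omega) = 0$.)

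For the lower bound $\dimh(2^\omega) \ge 1$, apply Proposition~\ref{coholderfromintervaltocantor} to obtain a $1$-co-H\"older map $f \from [0,1] \to 2^\omega$. Proposition~\ref{holderhaus}(2) with $\alpha = 1$ gives $\dimh f([0,1]) \ge \dimh([0,1]) = 1$, and since $f([0,1]) \subset 2^\omega$, monotonicity yields $\dimh(2^\omega) \ge 1$. Combining the two inequalities gives $\dimh(2^\omega) = 1$.

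I do not expect a genuine obstacle here: the entire content is Fact~\ref{dimofeuclid} together with the propositions already proved, and the only things requiring care are keeping the direction of the inequalities in Proposition~\ref{holderhaus}(2) straight — co-H\"older maps can only raise Hausdorff dimension — and noticing that it is co-H\"older maps (not merely H\"older maps) that Propositions~\ref{coholderfromcantortointerval} and~\ref{coholderfromintervaltocantor} supply in both directions, which is exactly what makes both halves of the sandwich go through.
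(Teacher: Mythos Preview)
Your proof is correct and follows essentially the same approach as the paper: both bounds are obtained by transporting $\dimh([0,1])=1$ through the co-H\"older maps of Propositions~\ref{coholderfromcantortointerval} and~\ref{coholderfromintervaltocantor} via Proposition~\ref{holderhaus}(2). You are simply more explicit than the paper about the monotonicity step $\dimh f(X)\le \dimh Y$ when $f(X)\subset Y$, and you add an optional direct cylinder-cover argument for the upper bound.
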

	\begin{proof}
		By Proposition \ref{coholderfromintervaltocantor},
		\[
		\dimh(2^\omega) \ge \dimh [0, 1] = 1.
		\]
		On the other hand, for every $\alpha > 1$, by Proposition \ref{coholderfromcantortointerval},
		\[
		\dimh(2^\omega) \le \alpha \dimh [0, 1] = \alpha.
		\]
		So $\dimh(2^\omega) \le 1$.
		
	\end{proof}
	
	\begin{prop}\label{biholdercantor}
		For every $d \in \omega \setminus \{0\}$, there is a $(1/d)$-bi-H\"{o}lder map $f\from 2^\omega \to (2^\omega)^d$.
	\end{prop}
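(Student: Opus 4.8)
The plan is to use the standard ``de-interleaving'' bijection between $2^\omega$ and $(2^\omega)^d$. Concretely, I would define $f\from 2^\omega \to (2^\omega)^d$ by $f(x) = \seq{x_0, \dots, x_{d-1}}$ where $x_i(k) = x(dk+i)$ for $i < d$ and $k \in \omega$; this is a bijection, its inverse being the interleaving map. On the target I use the max metric on $(2^\omega)^d$, consistent with the convention fixed in the proof of Proposition \ref{holderproduct}.

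The core of the argument is a direct computation of $d(f(x), f(y))$ in terms of $d(x,y)$. Fix $x \ne y$, let $n_0 = \min\{n : x(n) \ne y(n)\}$ so that $d(x,y) = 2^{-n_0}$, and write $n_0 = dq + r$ with $0 \le r < d$. Since $x$ and $y$ agree below $n_0$, each $x_i$ agrees with $y_i$ at every $k$ with $dk+i < n_0$; a short case analysis on whether $i < r$, $i = r$, or $i > r$ shows that the first index at which $x_i$ can differ from $y_i$ is $k = q$ when $i \ge r$ and $k = q+1$ when $i < r$. Moreover $x_r(q) = x(dq+r) = x(n_0) \ne y(n_0) = y_r(q)$, so $x_r$ genuinely differs from $y_r$ at index $q$. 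Hence $d(x_r, y_r) = 2^{-q}$ while $d(x_i, y_i) \le 2^{-q}$ for every $i$, and therefore $d(f(x), f(y)) = \max_{i<d} d(x_i, y_i) = 2^{-q}$.

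It then remains to read off the H\"older estimates. From $n_0 = dq + r$ we get
\[
d(f(x), f(y)) = 2^{-q} = 2^{r/d} \cdot 2^{-n_0/d} = 2^{r/d}\, d(x,y)^{1/d},
\]
and since $0 \le r \le d-1$ we have $1 \le 2^{r/d} \le 2^{(d-1)/d} < 2$. Thus $d(x,y)^{1/d} \le d(f(x), f(y)) \le 2\, d(x,y)^{1/d}$ for all $x, y \in 2^\omega$ (the case $x = y$ being trivial), so $f$ is $(1/d)$-bi-H\"older, with H\"older constant $2$ and co-H\"older constant $1$.

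I do not expect a serious obstacle: the only delicate point is the case analysis establishing $d(f(x), f(y)) = 2^{-q}$ \emph{exactly} --- in particular that no coordinate $x_i$ with $i < r$ contributes a larger distance, and that coordinate $r$ actually attains $2^{-q}$ --- together with keeping the boundary cases $n_0 < d$ (i.e.\ $q = 0$) in mind, where the formula still yields the right value.
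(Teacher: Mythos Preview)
Your proof is correct and uses exactly the same de-interleaving map and the same estimate as the paper; your case analysis on $i<r$, $i=r$, $i>r$ is in fact slightly more explicit than the paper's version, which simply notes that all coordinates agree below $m_0$ and that coordinate $r$ differs at $m_0$. The resulting bounds $d(x,y)^{1/d}\le d(f(x),f(y))\le 2\,d(x,y)^{1/d}$ coincide with those in the paper.
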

	\begin{proof}
		Define f by
		\[
		f(x)(i)(m) = x(m \cdot d + i) \text{ (for $x \in 2^\omega, i < d$ and $m \in \omega$)}.
		\]
		Let $x \ne y \in 2^\omega$ and $n_0 = \min \{ n \in \omega : x(n) \ne y(n) \}$.
		And take $i_0 < n$ and $m_0 \in \omega$ such that $n_0 = m_0 \cdot d + i_0$.
		Then $f(x)(i_0)(m_0) \ne f(y)(i_0)(m_0)$ and $f(x)(i)(m) = f(y)(i)(m)$ for any $i < n$ and $m < m_0$.
		So
		\[
		d(f(x), f(y)) = 2^{-m_0}.
		\]
		Now we have
		\[
		d m_0 \le n_0 \le d (m_0 + 1)
		\]
		So
		\[
		n_0 / d - 1 \le m_0 \le n_0 / d.
		\]
		Thus
		\[
		d(x, y)^{1/d} \le d(f(x), f(y)) \le 2 d(x, y)^{1/d}.
		\]
	\end{proof}
	
	\begin{prop}\label{coholderandhdz}
		Let $f \from X \to Y$ be an $\alpha$-co-H\"{o}lder map for some $\alpha > 0$.
		Then
		\[
		\non(\HDZ_X) \ge \non(\HDZ_Y) \text{ and } \cov(\HDZ_X) \le \cov(\HDZ_Y).
		\]
	\end{prop}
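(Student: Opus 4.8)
The plan is to play the two halves of Proposition~\ref{holderhaus} off against each other: an $\alpha$-co-H\"older map cannot decrease Hausdorff dimension (it multiplies it by at least $1/\alpha$), while its set-theoretic inverse, restricted to the image, is $(1/\alpha)$-H\"older and hence cannot increase dimension. So I would fix a co-H\"older constant $c$ for $f$ and recall, as in the proof of Proposition~\ref{holderhaus}(2), that $f$ is injective and that $g \coloneqq f^{-1} \from f(X) \to X$ is a $(1/\alpha)$-H\"older map with constant $c^{-1/\alpha}$. Restricting, for every $A \subset X$ the map $f \restriction A$ is again $\alpha$-co-H\"older, so $\dimh f(A) \ge (1/\alpha)\dimh A$ by Proposition~\ref{holderhaus}(2); and for every $B \subset f(X)$ the map $g \restriction B$ is $(1/\alpha)$-H\"older, so $\dimh g(B) \le \alpha \dimh B$ by Proposition~\ref{holderhaus}(1). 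Consequently $A \notin \HDZ_X$ implies $f(A) \notin \HDZ_Y$, and $B \in \HDZ_{f(X)}$ implies $g(B) \in \HDZ_X$; these two implications carry the whole argument.

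For the inequality $\non(\HDZ_X) \ge \non(\HDZ_Y)$ I would take any $A \subset X$ with $A \notin \HDZ_X$ (so $\dimh A > 0$); then $f(A) \notin \HDZ_Y$ and $\abs{f(A)} = \abs{A}$ by injectivity, hence $\non(\HDZ_Y) \le \abs{A}$, and taking the infimum over all such $A$ gives the claim. For $\cov(\HDZ_X) \le \cov(\HDZ_Y)$ I would start from a family $\{B_i : i \in I\} \subset \HDZ_Y$ covering $Y$ with $\abs{I} = \cov(\HDZ_Y)$; for each $i$ the set $B_i \cap f(X)$ lies in $\HDZ_{f(X)}$, being a subset of $B_i$, so $f^{-1}(B_i) = g(B_i \cap f(X)) \in \HDZ_X$, and the family $\{f^{-1}(B_i) : i \in I\}$ covers $X$ since every $x \in X$ has $f(x) \in B_i$ for some $i$. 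Therefore $\cov(\HDZ_X) \le \abs{I} = \cov(\HDZ_Y)$.

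I do not expect a serious obstacle: the only real decision is to push witnessing non-$\HDZ_X$ sets \emph{forward} along $f$ for the uniformity number but to pull $\HDZ_Y$-covers \emph{back} along $f$ for the covering number. The single point needing a little care is that preimages of dimension-zero sets are again dimension-zero, which is taken care of by the identity $f^{-1}(B) = g(B \cap f(X))$ together with the H\"older estimate for $g$ above; granting that, even the degenerate situations --- for example $\dimh Y = 0$, which by Proposition~\ref{holderhaus}(2) forces $\dimh X = 0$ --- cause no trouble.
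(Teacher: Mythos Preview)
Your argument is correct and is essentially the paper's proof unpacked: the paper packages the pair $(f,f^{-1})$ as a Galois--Tukey morphism $\Cov(\HDZ_X)\to\Cov(\HDZ_Y)$ and invokes Fact~1.8, the key point being exactly your observation that $f^{-1}$ is $(1/\alpha)$-H\"older so preimages of $\HDZ_Y$-sets land in $\HDZ_X$. Your direct treatment of $\non$ by pushing forward along $f$ and of $\cov$ by pulling back covers is precisely what the Galois--Tukey formalism encodes.
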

	\begin{proof}
		Define a Galois-Tukey morphism $(f, f^{-1}): \Cov(\HDZ_X) \to \Cov(\HDZ_Y)$.
		Since $f^{-1}$ is $1/\alpha$-H\"{o}lder, this satisifies
		\[
		A \in \HDZ_Y \Rightarrow f^{-1}(A) \in \HDZ_X. \qedhere
		\]
	\end{proof}

	\begin{lem}\label{intervalandr}
		For every $d \in \omega \setminus \{0\}$, $\non(\HDZ_{[0, 1]^d}) = \non(\HDZ_{\R^d})$ and $\cov(\HDZ_{[0, 1]^d}) = \cov(\HDZ_{\R^d})$.
	\end{lem}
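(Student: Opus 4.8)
The plan is to exploit that $\R^d$ is a countable union of isometric copies of the unit cube, namely $\R^d = \bigcup_{n \in \Z^d}(n + [0,1]^d)$, together with two soft facts about Hausdorff dimension. First I would record that $\HDZ_Z$ is a $\sigma$-ideal for every metric space $Z$: since each $\Haus^s_\delta$ is countably subadditive, so is $\Haus^s$ (after letting $\delta \to 0$), hence each $\scrN^s_Z$ is a $\sigma$-ideal, and therefore so is $\HDZ_Z = \bigcap_{s>0}\scrN^s_Z$ (equivalently, $\dimh$ of a countable union is the supremum of the dimensions). Second, I would note that Hausdorff dimension, hence membership in $\HDZ$, is intrinsic: if $A \subseteq Y \subseteq Z$ then $A \in \HDZ_Y \iff A \in \HDZ_Z$. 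This follows by applying Proposition \ref{holderhaus}(3) to the inclusion $Y \hookrightarrow Z$ restricted to $A$, which is $1$-bi-H\"older with constants $1,1$; equivalently, any $\delta$-cover of $A$ computed in $Z$ may be intersected with $Y$ without increasing diameters. In particular each translation $x \mapsto x + n$ with $n \in \Z^d$ is an isometry of $\R^d$, so it preserves $\HDZ_{\R^d}$ and carries $\HDZ_{[0,1]^d}$ onto $\HDZ_{n + [0,1]^d}$.

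Next I would prove the two equalities for $\non$. For $\non(\HDZ_{\R^d}) \le \non(\HDZ_{[0,1]^d})$: a witnessing set $A \subseteq [0,1]^d$ with $A \notin \HDZ_{[0,1]^d}$ satisfies $A \notin \HDZ_{\R^d}$ by the intrinsic property, so it witnesses the left-hand side. For the reverse inequality, take $A \subseteq \R^d$ with $A \notin \HDZ_{\R^d}$ and $\abs{A} = \non(\HDZ_{\R^d})$; writing $A = \bigcup_{n \in \Z^d}(A \cap (n + [0,1]^d))$ and using that $\HDZ_{\R^d}$ is a $\sigma$-ideal, some piece $A \cap (n + [0,1]^d)$ fails to lie in $\HDZ_{\R^d}$. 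Translating it by $-n$ produces a subset of $[0,1]^d$ of size $\le \abs{A}$ that lies outside $\HDZ_{[0,1]^d}$ (again by isometry-invariance and the intrinsic property), giving $\non(\HDZ_{[0,1]^d}) \le \abs{A}$.

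The argument for $\cov$ is dual. For $\cov(\HDZ_{[0,1]^d}) \le \cov(\HDZ_{\R^d})$: if $\mathcal{F} \subseteq \HDZ_{\R^d}$ covers $\R^d$, then $\{F \cap [0,1]^d : F \in \mathcal{F}\}$ covers $[0,1]^d$ and consists of sets in $\HDZ_{[0,1]^d}$ by the intrinsic property. For the other direction, if $\mathcal{F} \subseteq \HDZ_{[0,1]^d}$ covers $[0,1]^d$, then $\{n + F : n \in \Z^d,\ F \in \mathcal{F}\}$ covers $\R^d = \bigcup_{n}(n + [0,1]^d)$ and each member lies in $\HDZ_{\R^d}$; since $\HDZ_{[0,1]^d}$ is a proper $\sigma$-ideal containing all singletons we have $\cov(\HDZ_{[0,1]^d}) \ge \aleph_1$, so this new family has size $\aleph_0 \cdot \abs{\mathcal{F}} = \abs{\mathcal{F}}$, and hence $\cov(\HDZ_{\R^d}) \le \abs{\mathcal{F}}$.

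I do not expect a genuine obstacle; the lemma is essentially bookkeeping. The only points that need care are (i) making the intrinsic claim precise — that the Hausdorff measure of a subset does not depend on the ambient metric space, for which I would either invoke Proposition \ref{holderhaus} with the inclusion map or argue directly with covers — and (ii) invoking $\sigma$-additivity of $\HDZ$ to locate a single unit cube carrying a positive-dimensional piece of a $\non$-witness, which is the one place the decomposition $\R^d = \bigcup_{n}(n + [0,1]^d)$ is used nontrivially.
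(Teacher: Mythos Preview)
Your proof is correct and follows essentially the same strategy as the paper's: decompose $\R^d$ into countably many similar copies of the cube, use $\sigma$-additivity of $\HDZ$ to locate a positive-dimensional piece for the $\non$ inequality, and transport a cube-cover to an $\R^d$-cover for the $\cov$ inequality. The only cosmetic differences are that the paper uses the nested cubes $[-n,n]^d$ with scaling maps in place of your integer translates, and in the $\cov$ step it bundles the countable union inside each member of the new family (so that $\abs{\mathcal{G}} \le \abs{\mathcal{F}}$ automatically, avoiding your appeal to $\cov(\HDZ_{[0,1]^d}) \ge \aleph_1$).
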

	\begin{proof}
		Since $[0, 1]^d \subset \R^d$, it is clear that $\non(\HDZ_{\R^d}) \le \non(\HDZ_{[0, 1]^d})$ and $\cov(\HDZ_{[0, 1]^d}) \le \cov(\HDZ_{\R^d})$.
		
		Now we show $\non(\HDZ_{[0, 1]^d}) \le \non(\HDZ_{\R^d})$.
		Let $A \not \in \HDZ_{\R^d}$.
		Then by $\sigma$-additivity of Hausdorff measures, we have $A \cap [-n, n]^d \not \in \HDZ_{[-n, n]^d}$ for some $n \in \omega$.
		Then $\abs{A \cap [-n, n]^d} \ge \non(\HDZ_{[-n, n]^d}) = \non(\HDZ_{[-1, 1]^d})$.
		So $\abs{A} \ge \non(\HDZ_{[-1, 1]^d})$.
		Thus $\non(\HDZ_{\R^d}) \ge \non(\HDZ_{[0, 1]^d})$.
		
		Next we show $\cov(\HDZ_{\R^d}) \le \cov(\HDZ_{[0, 1]^d})$.
		Take $\mathcal{F} \subset \HDZ_{[0, 1]^d}$ of size $\cov(\HDZ_{[0, 1]^d})$ such that $\bigcup \mathcal{F} = [0, 1]^d$.
		Define
		\[
		\mathcal{G} = \{ \bigcup_{n \in \omega} \operatorname{scale}_n``(X) : X \in \mathcal{F} \},
		\]
		where $\operatorname{scale}_n \from [0, 1]^d \to [-n, n]^d; \boldsymbol{x} \mapsto 2n \boldsymbol{x} - (n, n, \dots, n)$.
		Then $\mathcal{G}$ is a subset of $\HDZ_{\R^d}$ and of size $\le \cov(\HDZ_{[0, 1]^d})$ and satisfies $\bigcup \mathcal{G} = \R^d$.
	\end{proof}

	\begin{thm}\label{replacingspaces}
		For all $d \in \omega \setminus \{0\}$, $\non(\HDZ) = \non(\HDZ_{\R^d})$ and $\cov(\HDZ) = \cov(\HDZ_{\R^d})$.
	\end{thm}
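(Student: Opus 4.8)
The plan is to prove the chain of equalities
\[
\non(\HDZ) = \non(\HDZ_{(2^\omega)^d}) = \non(\HDZ_{[0,1]^d}) = \non(\HDZ_{\R^d}),
\]
and the analogous chain for $\cov$, reading off the last equality from Lemma \ref{intervalandr} and obtaining the first two by exhibiting co-H\"older maps in \emph{both} directions between the relevant pairs of spaces and invoking Proposition \ref{coholderandhdz}. Since $\HDZ$ is unchanged when a product metric is replaced by a Lipschitz-equivalent one (a bi-Lipschitz map is $1$-bi-H\"older, so this follows from Proposition \ref{holderhaus}(3)), we may freely use the max metric on $(2^\omega)^d$ and on $[0,1]^d$, matching Proposition \ref{holderproduct}.

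For the first equality, Proposition \ref{biholdercantor} supplies a $(1/d)$-bi-H\"older map $f\from 2^\omega \to (2^\omega)^d$. Inspecting its definition $f(x)(i)(m) = x(m\cdot d + i)$ shows that $f$ is a bijection (it simply interleaves and de-interleaves coordinates), so its inverse $f^{-1}\from (2^\omega)^d \to 2^\omega$ is a $d$-bi-H\"older map, in particular $d$-co-H\"older. Applying Proposition \ref{coholderandhdz} to the $(1/d)$-co-H\"older map $f$ gives $\non(\HDZ) \ge \non(\HDZ_{(2^\omega)^d})$ and $\cov(\HDZ) \le \cov(\HDZ_{(2^\omega)^d})$, and applying it to the $d$-co-H\"older map $f^{-1}$ gives the reverse inequalities; hence equality in both.

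For the second equality, Proposition \ref{coholderfromintervaltocantor} gives a $1$-co-H\"older map $[0,1] \to 2^\omega$, whose $d$-th power is a $1$-co-H\"older map $[0,1]^d \to (2^\omega)^d$ by Proposition \ref{holderproduct}(2); Proposition \ref{coholderandhdz} then yields $\non(\HDZ_{[0,1]^d}) \ge \non(\HDZ_{(2^\omega)^d})$ and $\cov(\HDZ_{[0,1]^d}) \le \cov(\HDZ_{(2^\omega)^d})$. Conversely, fix any $\alpha > 1$; Proposition \ref{coholderfromcantortointerval} gives an $\alpha$-co-H\"older map $2^\omega \to [0,1]$, whose $d$-th power is an $\alpha$-co-H\"older map $(2^\omega)^d \to [0,1]^d$, and Proposition \ref{coholderandhdz} yields $\non(\HDZ_{(2^\omega)^d}) \ge \non(\HDZ_{[0,1]^d})$ and $\cov(\HDZ_{(2^\omega)^d}) \le \cov(\HDZ_{[0,1]^d})$. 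Combining all of this with Lemma \ref{intervalandr} finishes the proof.

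There is no serious obstacle here: the geometric content has already been packaged into Propositions \ref{holderproduct}, \ref{coholderfromcantortointerval}, \ref{coholderfromintervaltocantor}, \ref{biholdercantor}, and \ref{coholderandhdz}. The only point that needs a moment's care is that Proposition \ref{coholderandhdz} produces only one-sided inequalities, so for each pair of spaces one must secure co-H\"older maps running both ways: between $2^\omega$ and $(2^\omega)^d$ this is what forces us to observe that the bi-H\"older map of Proposition \ref{biholdercantor} is a bijection, and between $(2^\omega)^d$ and $[0,1]^d$ it is why we need the asymmetric pair of results (an exact $1$-co-H\"older map one way and an $\alpha$-co-H\"older map for every $\alpha>1$ the other way) rather than a single bi-H\"older map.
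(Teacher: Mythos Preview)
Your proof is correct and follows essentially the same approach as the paper: both use Lemma \ref{intervalandr} together with Propositions \ref{holderproduct}, \ref{coholderfromcantortointerval}, \ref{coholderfromintervaltocantor}, \ref{biholdercantor}, and \ref{coholderandhdz} to produce co-H\"older maps in both directions between $2^\omega$ and $[0,1]^d$. The only cosmetic difference is that the paper composes the maps into a single $d$-co-H\"older map $[0,1]^d \to 2^\omega$ and a single $(2/d)$-co-H\"older map $2^\omega \to [0,1]^d$ (implicitly using, as you made explicit, that the bi-H\"older map of Proposition \ref{biholdercantor} is a bijection), whereas you insert $(2^\omega)^d$ as an intermediate node and prove equality at each link of the chain.
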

	\begin{proof}
		By Lemma \ref{intervalandr}, it suffices to show that $\non(\HDZ) = \non(\HDZ_{[0,1]^d})$ and $\cov(\HDZ) = \cov(\HDZ_{[0, 1]^d})$.
		
		By Proposition \ref{coholderfromintervaltocantor}, there is a $1$-co-H\"{o}lder $[0, 1] \to 2^\omega$.
		Then by Proposition \ref{holderproduct}, there is a $1$-co-H\"{o}lder $[0, 1]^d \to (2^\omega)^d$.
		By Proposition \ref{biholdercantor}, there is a $d$-bi-H\"{o}lder $(2^\omega)^d \to 2^\omega$.
		Composing these maps, we obtain $d$-co-H\"{o}lder map $[0,1]^d \to 2^\omega$.
		So by Proposition \ref{coholderandhdz}, we have $\non(\HDZ_{[0,1]^d}) \ge \non(\HDZ)$ and $\cov(\HDZ_{[0,1]^d}) \le \cov(\HDZ)$.
		
		On the other hand by Proposition \ref{coholderfromcantortointerval}, there is a $2$-co-H\"{o}lder map $2^\omega \to [0,1]$.
		Then by Proposition \ref{holderproduct}, there is a $2$-co-H\"{o}lder $(2^\omega)^d \to ([0,1])^d$.
		By Proposition \ref{biholdercantor}, there is a $(1/d)$-bi-H\"{o}lder $2^\omega \to (2^\omega)^d$.
		Composing these maps, we obtain $(2/d)$-co-H\"{o}lder map $2^\omega \to [0,1]^d$.
		So by Proposition \ref{coholderandhdz}, we have $\non(\HDZ) \ge \non(\HDZ_{[0,1]^d})$ and $\cov(\HDZ) \le \cov(\HDZ_{[0,1]^d})$.
	\end{proof}
	
	\begin{conj}\label{conjhdz}
		\begin{enumerate}
			\item For every compact Polish space $X$ with $0 < \Haus^s(X) < \infty$ for some $s > 0$, $\non(\HDZ_X) = \non(\HDZ)$ and $\cov(\HDZ_X) = \cov(\HDZ)$.
		\end{enumerate}
	\end{conj}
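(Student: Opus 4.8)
The natural strategy is to prove both equalities at once by a ``co-Hölder sandwich'': by Proposition~\ref{coholderandhdz} it suffices to produce an $\alpha$-co-Hölder map $g\from 2^\omega\to X$ and an $\alpha'$-co-Hölder map $h\from X\to 2^\omega$ for some positive exponents. Indeed, applying Proposition~\ref{coholderandhdz} to $g$ gives $\non(\HDZ)\ge\non(\HDZ_X)$ and $\cov(\HDZ)\le\cov(\HDZ_X)$, and applying it to $h$ gives $\non(\HDZ_X)\ge\non(\HDZ)$ and $\cov(\HDZ_X)\le\cov(\HDZ)$, whence $\non(\HDZ_X)=\non(\HDZ)$ and $\cov(\HDZ_X)=\cov(\HDZ)$. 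So everything reduces to two embedding-type constructions: one using that $X$ is ``not too small'' ($\Haus^s(X)>0$), the other using that $X$ is ``not too large'' ($\Haus^s(X)<\infty$).

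For the map $g\from 2^\omega\to X$ the plan is to embed a bi-Hölder copy of $2^\omega$ into $X$. After replacing $X$ by a compact subset with $0<\Haus^s(X)<\infty$ and taking $\mu=\Haus^s\restriction X$, one has the Frostman-type bound $\mu(B(x,r))\le Cr^s$. Using $\mu$, recursively build a Cantor scheme $\seq{E_t : t\in 2^{<\omega}}$ of nonempty closed sets with $E_{t\append 0},E_{t\append 1}\subset E_t$, shrinking diameters, positive $\mu$-mass, and $d(E_{t\append 0},E_{t\append 1})$ a definite fraction of $(\mu(E_t)/C)^{1/s}$: the upper bound on $\mu$ is precisely what forbids the mass of $E_t$ from concentrating in a small ball, so such a splitting exists. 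The delicate point is to perform the split so that \emph{both} children keep a fixed proportion of $\mu(E_t)$; then the separation at level $n$ stays bounded below by a fixed power of $2^{-n}$, and $x\mapsto$ the point of $\bigcap_n E_{x\restriction n}$ is co-Hölder with uniform constants. This already yields $\non(\HDZ_X)\le\non(\HDZ)$ and $\cov(\HDZ)\le\cov(\HDZ_X)$. (If $s\ge 1$ one can first snowflake, passing to $(X,d^\beta)$ via the bi-Hölder identity map to bring $\dimh X$ into $(0,1]$, just as in Proposition~\ref{biholdercantor} and Theorem~\ref{replacingspaces}.)

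The reverse inequalities $\cov(\HDZ_X)\le\cov(\HDZ)$ and $\non(\HDZ)\le\non(\HDZ_X)$ would follow from a co-Hölder map $h\from X\to 2^\omega$, obtained by choosing for each $n$ a finite cover of $X$ by sets of diameter $\le 2^{-n/\alpha'}$, the level-$(n{+}1)$ cover refining the level-$n$ one, and letting $h(x)$ record at each level the index of the piece containing $x$; this is $\alpha'$-co-Hölder exactly when the level-$n$ cover can be taken of size at most $2^n$, i.e.\ when $X$ has finite upper box-counting dimension. \textbf{This is the main obstacle.} Finiteness of $\Haus^s(X)$ bounds $\dimh X$ by $s$ but does not bound the box-counting dimension: one can build a compact metric space $X$ with $0<\Haus^s(X)<\infty$ which, at a rapidly decreasing sequence of scales $\delta_j\to 0$, contains a cluster of $M_j$ pairwise ($\ge\delta_j$)-separated similar copies, scaled to diameter $\varepsilon_j\ll\delta_j$, of a fixed compact set of positive finite $\Haus^s$ measure, with $\sum_j M_j\varepsilon_j^{\,s}<\infty$ (so $\Haus^s(X)<\infty$); then the covering number $N(X,\delta_j/2)$ is at least $M_j$, which can be made to grow faster than any power of $1/\delta_j$, so $X$ has infinite upper box dimension. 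Since $2^\omega$ has upper box dimension $1$ and a co-Hölder map $X\to 2^\omega$ would force the upper box dimension of $X$ to be finite (its inverse on the image is Hölder), no such $h$ exists. Hence the co-Hölder sandwich does not by itself settle the statement, which is part of why it is stated only as a conjecture.

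To push through anyway, I would try to prove the two reverse inequalities by a decomposition rather than a single map: using $\Haus^s(X)<\infty$, write $X=\bigcup_n X_n\cup Z$ with each $X_n$ of finite upper box dimension (hence admitting a co-Hölder map into $2^\omega$ by the cover construction above, so $\cov(\HDZ_{X_n})\le\cov(\HDZ)$ and $\non(\HDZ)\le\non(\HDZ_{X_n})$) and $Z$ ``negligible'' for the two invariants. The crux is that the only sets negligible for $\non(\HDZ_X)$ and $\cov(\HDZ_X)$ are the $\HDZ_X$-sets themselves, and it is unclear that $\Haus^s(X)<\infty$ lets one sweep the part of $X$ responsible for the infinite box dimension into a set of Hausdorff dimension zero — in the example above that part is a union of genuine copies of a positive-dimensional set, yet it still contributes exactly the ``correct'' values of $\non$ and $\cov$, which is encouraging but does not generalise automatically. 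An alternative would be to bypass co-Hölder maps and transfer to $\HDZ_X$ the combinatorial descriptions of $\non(\HDZ)$ and $\cov(\HDZ)$ by a uniformisation argument, exploiting $\HDZ_X=\bigcap_{t>0}\scrN^t_X$; turning either idea into a proof valid for all such $X$ is precisely the open problem.
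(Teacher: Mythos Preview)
The statement you are addressing is labelled \emph{Conjecture} in the paper and is not accompanied by any proof; the paper proves the special case $X=\R^d$ in Theorem~\ref{replacingspaces} and then records the general compact Polish case as an open problem. So there is no ``paper's own proof'' to compare your proposal against.

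Given that, your write-up is not a proof but a (fairly accurate) analysis of why the method of Theorem~\ref{replacingspaces} does not extend directly. Your first half, constructing a co-H\"older $g\from 2^\omega\to X$ from the Frostman bound $\mu(B(x,r))\le Cr^s$, is the right idea and would give the inequalities $\non(\HDZ_X)\le\non(\HDZ)$ and $\cov(\HDZ)\le\cov(\HDZ_X)$; note, though, that the ``delicate point'' you flag---splitting $E_t$ into two children each retaining a fixed proportion of $\mu(E_t)$ while staying a definite distance apart---already needs an argument beyond the bare Frostman inequality (one typically uses that a doubling-type estimate, or at least a lower mass bound at some scale, holds on a subset of positive measure), so even this direction is not quite a one-liner in full generality.

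Your diagnosis of the reverse direction is correct: a co-H\"older map $X\to 2^\omega$ forces finite upper box dimension of $X$, and $0<\Haus^s(X)<\infty$ does not imply that, as your clustered-copies example shows. Your proposed workaround---decompose $X$ into countably many pieces of finite box dimension plus a remainder---is a reasonable line to try, but your own observation that the remainder need not lie in $\HDZ_X$ is exactly the obstruction, and nothing in the proposal overcomes it. In short, you have correctly located the gap that makes this a conjecture rather than a theorem, but you have not closed it; the paper does not close it either.
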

	
	\section{Hausdorff measure zero ideals and Yorioka ideals}
	
	\begin{lem}\label{hausinfty}
		For a gauge function $f$ and $A \subset X$, if $\Haus^f_\infty(A) = 0$, then $\Haus^f(A) = 0$.
	\end{lem}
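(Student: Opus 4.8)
The plan is to deduce the statement from the single-scale estimate $\Haus^f_\delta(A) = 0$ for every fixed $\delta > 0$. This reduction is built into the definitions: shrinking $\delta$ only removes admissible covers, so $\delta \mapsto \Haus^f_\delta(A)$ is nonincreasing and therefore $\Haus^f(A) = \lim_{\delta \to 0}\Haus^f_\delta(A) = \sup_{\delta > 0}\Haus^f_\delta(A)$, which is $0$ precisely when each $\Haus^f_\delta(A)$ is.

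So fix $\delta > 0$ with $f(\delta) > 0$, and fix $\varepsilon > 0$; put $\varepsilon' = \min\{\varepsilon, f(\delta)\} > 0$. Since $\Haus^f_\infty(A) = 0 < \varepsilon'$, there is a cover $\seq{C_n : n \in \omega}$ of $A$, with no constraint on the diameters, such that $\sum_n f(\diam C_n) < \varepsilon'$. The key point is that such a cover is automatically a $\delta$-cover: if some $\diam C_n$ were strictly larger than $\delta$, then monotonicity of $f$ would give $f(\diam C_n) \ge f(\delta) \ge \varepsilon' > \sum_m f(\diam C_m) \ge f(\diam C_n)$, which is absurd. Hence $\diam C_n \le \delta$ for all $n$, so $\seq{C_n : n \in \omega}$ witnesses $\Haus^f_\delta(A) \le \sum_n f(\diam C_n) < \varepsilon' \le \varepsilon$. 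As $\varepsilon$ was arbitrary, $\Haus^f_\delta(A) = 0$.

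It remains to treat the degenerate scales $\delta$ with $f(\delta) = 0$, which a gauge function is allowed to have. For such $\delta$ one argues directly: in a separable metric space — in particular in $2^\omega$ and in $\R^d$, the only ambient spaces needed later — $A$ is covered by countably many sets of diameter $\le \delta$, and the $f$-mass of any such cover is $0$, so again $\Haus^f_\delta(A) = 0$. Combining the two cases gives $\Haus^f_\delta(A) = 0$ for all $\delta > 0$, hence $\Haus^f(A) = 0$. I do not anticipate any genuine difficulty: the entire content is the monotonicity observation in the second paragraph, and the only things to watch are phrasing the reduction to a fixed scale correctly and not overlooking the $f(\delta) = 0$ possibility.
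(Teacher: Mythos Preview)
Your proof is correct and takes essentially the same approach as the paper's: choose $\varepsilon'$ small enough that monotonicity of $f$ forces every piece of a cheap unrestricted cover to have diameter at most $\delta$. The only cosmetic difference is that the paper introduces an auxiliary $\delta' < \delta$ with $f(\delta') < f(\delta)$ and uses $\varepsilon' = \min\{\varepsilon, f(\delta')\}$ with a non-strict bound, whereas you use $\varepsilon' = \min\{\varepsilon, f(\delta)\}$ with a strict bound; you are also a bit more careful in treating the degenerate case $f(\delta) = 0$, which the paper's choice of $\delta'$ tacitly assumes does not occur.
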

	\begin{proof}
		By $\Haus^f_\infty(A) = 0$, we have
		\begin{align}
			(\forall \varepsilon > 0)(\exists \seq{C_n : n \in \omega})(A \subset \bigcup_n C_n \AND \sum_n f(\diam(C_n)) \le \varepsilon). \label{hausfinfty}
		\end{align}
		Let $\varepsilon, \delta > 0$.
		Take $\delta' < \delta$ such that $f(\delta') < f(\delta)$.
		Put $\varepsilon' = \min \{ \varepsilon, f(\delta') \}$.
		Then by (\ref{hausfinfty}), we can take $\seq{C_n : n \in \omega}$ such that
		\[
		\sum_n f(\diam(C_n)) \le \varepsilon'.
		\]
		Then for each $n$, we have
		\[
		f(\diam(C_n)) \le \varepsilon' \le f(\delta') < f(\delta).
		\]
		Since $f$ nondecreasing we have
		\[
		\diam(C_n) \le \delta.
		\]
		So we have showed
		\[
		(\forall \varepsilon, \delta > 0)(\exists \seq{C_n : n \in \omega})(A \subset \bigcup_n C_n \AND \sum_n f(\diam(C_n)) \le \varepsilon \AND (\forall n)(\diam(C_n) \le \delta)).
		\]
		That is, we showed $\Haus^f(A) = 0$.		
	\end{proof}
	
	\begin{defi}
		For a monotone function $e \in \omega^\omega$ that goes to $\infty$, we define a gauge function $e^*$ by
		\[
		e^*(2^{-k}) = 2^{-e(k)} \text{ for all  $k \in \omega$}.
		\]
		Define the value of $e^*(s)$ for $s$ being not a form of $2^{-k}$ by linear interpolation.	
	\end{defi}
	
	\begin{lem}\label{neandjh}
		Suppose that $e, h \in \omega^\omega$ nondecreasing satisfy $e(l) \le \min \{ n : l < h(2^n)\}$ for all $l \in \omega$.
		Then $\scrN^{e^*} \subset \scrJ_h$.
	\end{lem}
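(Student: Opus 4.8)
The plan is to convert the hypothesis $\Haus^{e^*}(A)=0$ into a sequence of covers of $A$ by basic cylinders of rapidly decreasing total weight, and then to distribute the cylinders from all these covers among the coordinates $n\in\omega$, each shortened to length $h(n)$, so that every point of $A$ is caught infinitely often. In detail, for each $j\in\omega$ I want a set $S_j\subset 2^{<\omega}$ with $A\subset\bigcup_{t\in S_j}[t]$ and $\sum_{t\in S_j}2^{-e(\abs t)}\le 2^{-j-2}$. Since $\Haus^{e^*}(A)=0$ forces $\Haus^{e^*}_\delta(A)=0$ for every $\delta>0$, there is a cover $\{C_n:n\in\omega\}$ of $A$ with $\diam C_n\le 2^{-j}$ and $\sum_n e^*(\diam C_n)\le 2^{-j-3}$. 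We may assume each nonempty $C_n$ has $\diam C_n>0$: a singleton piece can be replaced by a cylinder long enough that the total extra weight stays $\le 2^{-j-3}$, which is possible because $e\to\infty$. Now the Cantor metric is an ultrametric, so $\diam C_n$ is always some $2^{-k}$, and then $C_n$ is contained in a cylinder $[s_n]$ with $\abs{s_n}=k$ and $\diam[s_n]=\diam C_n$; replacing each $C_n$ by $[s_n]$ costs nothing in $e^*$-weight and yields the desired $S_j$.

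Next comes the distribution step, and this is where the hypothesis on $e$ and $h$ is used. Put $\mathcal{C}=\{(j,t):j\in\omega,\ t\in S_j\}$ and call $(j,t)$ \emph{compatible} with a coordinate $n$ if $h(n)\le\abs t$. I want an injection $\Phi\from\mathcal{C}\to\omega$ with every $(j,t)$ compatible with $\Phi(j,t)$, and I would build it greedily, processing the pairs in nondecreasing order of $\abs t$ (there are only finitely many pairs of each length, by the estimate below) and sending the current pair to the least coordinate not yet used that is compatible with it. The key estimate: for $v\in\omega$ with $e(v)\ge 1$, since $e$ is nondecreasing each $t$ with $\abs t\le v$ has $2^{-e(\abs t)}\ge 2^{-e(v)}$, so the number of pairs $(j,t)\in\mathcal{C}$ with $\abs t\le v$ is at most $\sum_{j\in\omega}2^{-j-2}\cdot 2^{e(v)}=2^{e(v)-1}$; meanwhile the hypothesis applied with $l=v$ gives $h(2^{e(v)-1})\le v$, hence $h(m)\le v$ for all $m\le 2^{e(v)-1}$, so at least $2^{e(v)-1}+1$ coordinates $n$ satisfy $h(n)\le v$. (When $e(v)=0$ the bound $\sum_{t\in S_j}2^{-e(\abs t)}\le 2^{-j-2}<1$ forces $\mathcal{C}$ to contain no pair of length $\le v$ at all.) Hence, when the greedy process reaches a pair of length $v$, every coordinate used so far satisfies $h(n)\le v$ and there are at most $2^{e(v)-1}-1$ of them, so a compatible fresh coordinate remains; thus $\Phi$ is well defined.

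Then I would set $\sigma(n)=t\upharpoonright h(n)$ whenever $n=\Phi(j,t)$ for some $(j,t)\in\mathcal{C}$ — well defined since $\Phi$ is injective and $h(n)\le\abs t$ — and $\sigma(n)=0^{h(n)}$ otherwise, so that $\height\sigma=h$. For $x\in A$ and each $j$, choosing $t_j\in S_j$ with $x\in[t_j]$ gives $\sigma(\Phi(j,t_j))=t_j\upharpoonright h(\Phi(j,t_j))=x\upharpoonright h(\Phi(j,t_j))$, so $\sigma(\Phi(j,t_j))\subset x$; and the coordinates $\Phi(j,t_j)$, $j\in\omega$, are pairwise distinct since $\Phi$ is injective. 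Therefore $x\in[\sigma]_\infty$, so $A\in\scrJ_h$.

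The step I expect to be the main obstacle is getting the two halves to fit together numerically: the greedy distribution works only because the total of the weight bounds, $\sum_j 2^{-j-2}=\tfrac12$, keeps the number of short cylinders, $2^{e(v)-1}$, just below the number of admissible coordinates, $2^{e(v)-1}+1$, furnished by the hypothesis $e(v)\le\min\{n:v<h(2^n)\}$ — there is essentially no slack, so one must be careful with the off-by-one points (the padding of singleton pieces, the ``$+1$'' coming from the coordinates $0,\dots,2^{e(v)-1}$, and the degenerate case $e(v)=0$). The ultrametric remark that makes the cylinder-cover step lossless is also easy to miss; without it one would be forced into a clumsier ``refine each piece one level deeper'' argument to control the weight.
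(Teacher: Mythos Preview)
Your proof is correct and follows essentially the same route as the paper: both extract covers $S_j$ of total $e^*$-weight at most $2^{-j-2}$, pool all the cylinders, and use the same counting (total weight $<\tfrac12$ against the hypothesis $e(l)\le\min\{n:l<h(2^n)\}$) to arrange them with height $h$. The only cosmetic difference is that the paper simply enumerates the pooled cylinders in nondecreasing order of length and verifies by contradiction that the $k$-th one already has length $\ge h(k)$, whereas you recast the same inequality ($\le 2^{e(v)-1}$ cylinders of length $\le v$ versus $\ge 2^{e(v)-1}+1$ coordinates $n$ with $h(n)\le v$) as the feasibility of a greedy injection $\Phi$.
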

	\begin{proof}
		Let $A \in \scrN^{e^*}$.
		Then, for each $n \in \omega$, we can take $\sigma_n \in (\twototheltomega)^\omega$ such that $A \subset \bigcup_i [\sigma_n(i)]$ and $\sum_i 2^{-e(|\sigma_n(i)|)} < 2^{-n-2}$.
		Let $\sigma \in (\twototheltomega)^\omega$ be the enumeration of $\{ \sigma_n(i) : n, i \in \omega \}$ in ascending order of length.
		
		It is clear that $A \subset [\sigma]_\infty$.
		So we shall prove that $|\sigma(k)| \ge h(k)$ for all $k$.
		Assume that $|\sigma(k)| < h(k)$ for some $k$.
		For every $m \le k$,
		\[
		|\sigma(m)| \le |\sigma(k)| < h(k) \le h(2^{n_0}),
		\]
		where $n_0 = \ceil{\log_2 k}$.
		So for $m < k$ we obtain
		\[
		e(\abs{\sigma(m)}) \le \min \{ n : \abs{\sigma(m)} < h(2^n) \} \le n_0.
		\]
		Then we have
		\[
		\sum_{m < k} 2^{-e(|\sigma(m)|)} \ge \sum_{m < k} 2^{-n_0} = k \cdot 2^{-n_0} \ge 2^{n_0-1} 2^{-n_0} = 1/2.
		\]
		On the other hand, by $\sum_i 2^{-e(|\sigma_n(i)|)} < 2^{-n-2}$ for all $n$, we have $\sum_{k \in \omega} 2^{-e(|\sigma(k)|)} < 1/2$. It is a contradiction.
	\end{proof}
	
	\begin{lem}\label{yoriokaandhausideals}
		Let $e, c, h \in \omega^\omega$.
		Let $\seq{I_n : n \in \omega}$ be the interval partition such that $\abs{I_n} = h(n)$.
		Let $g_{c, h} \from \omega \to \omega$ be defined by $g_{c,h}(k) = \floor{\log_2 c(n)}$ whenever $k \in I_n$.
		Suppose that $e(g_{c, h}(n)) \ge 2 \log_2 n$ for all $n \in \omega$.
		Then $\frakv^\exists_{c, h} \le \non(\scrN^{e^*})$ and $\cov(\scrN^{e^*}) \le \frakc^\exists_{c, h}$.
	\end{lem}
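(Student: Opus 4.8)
\emph{Proof plan.} I would prove both inequalities simultaneously by building a Galois--Tukey morphism $(\varphi,\psi)$ from $\Cov(\scrN^{e^*})$ to $\wLc(c,h)$: by \cite[Theorem 4.9]{blass2010combinatorial} this gives $\cov(\scrN^{e^*})=\norm{\Cov(\scrN^{e^*})}\le\norm{\wLc(c,h)}=\frakc^\exists_{c,h}$ and $\frakv^\exists_{c,h}=\norm{\wLc(c,h)^\perp}\le\norm{\Cov(\scrN^{e^*})^\perp}=\non(\scrN^{e^*})$ at once. Throughout put $a_n=\sum_{m<n}h(m)$, so that $I_n=[a_n,a_{n+1})$, and $d_n=\floor{\log_2 c(n)}$; note $d_n=g_{c,h}(k)$ for every $k\in I_n$, and --- to avoid degenerate situations --- assume $c(n)\ge 2$ and $h(n)\ge 1$ for all $n$, so that $2^{d_n}\le c(n)$ and $a_n\to\infty$.

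For the forward leg I would let $\varphi\from 2^\omega\to\prod c$ read off an \emph{initial segment}: set $\varphi(x)(n)=\sum_{i<d_n}x(i)\,2^i$, which lies in $\{0,\dots,2^{d_n}-1\}\subset c(n)$. The point of this choice is that $\varphi(x)(n)$ depends only on $x\restriction d_n$, so for each value $v$ the fibre $\{x:\varphi(x)(n)=v\}$ is either empty or a basic clopen set of ``length'' $d_n$, and its $e^*$-value is $e^*(2^{-d_n})=2^{-e(d_n)}$. For the backward leg I would simply take
\[
\psi(\varphi')=\{x\in 2^\omega:\varphi(x)\in^\infty\varphi'\},
\]
so that the morphism inequality $\varphi(x)\in^\infty\varphi'\implies x\in\psi(\varphi')$ holds trivially. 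Everything then reduces to proving that $\psi(\varphi')\in\scrN^{e^*}$ for each slalom $\varphi'\in S(c,h)$.

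This is where the hypothesis $e(g_{c,h}(n))\ge 2\log_2 n$ is used. For $n\in\omega$, let $A_n=\{x:\varphi(x)(n)\in\varphi'(n)\}$, a union of at most $\abs{\varphi'(n)}\le h(n)$ basic clopen sets of length $d_n$; since $\psi(\varphi')\subset\bigcup_{n\ge N}A_n$ for every $N$, covering by the corresponding clopen sets gives $\Haus^{e^*}_\infty(\psi(\varphi'))\le\sum_{n\ge N}h(n)\,2^{-e(d_n)}$. Now $e(d_n)=e(g_{c,h}(m))\ge 2\log_2 m$ for every $m\in I_n$, so $2^{-e(d_n)}\le m^{-2}$ for each such $m$, and summing over the $h(n)=\abs{I_n}$ points of $I_n$ yields $h(n)\,2^{-e(d_n)}\le\sum_{m\in I_n}m^{-2}$; hence $\sum_{n\ge N}h(n)\,2^{-e(d_n)}\le\sum_{m\ge a_N}m^{-2}$, which tends to $0$ as $N\to\infty$. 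Thus $\Haus^{e^*}_\infty(\psi(\varphi'))=0$, and Lemma~\ref{hausinfty} promotes this to $\psi(\varphi')\in\scrN^{e^*}$, as required.

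The only subtle decision is the form of $\varphi$: it must read an initial segment of $x$ (rather than, say, the $n$-th block $x\restriction I_n$), since only then is each fibre $\{x:\varphi(x)(n)=v\}$ a genuine length-$d_n$ cylinder and the resulting cover of $\psi(\varphi')$ cheap for the gauge $e^*$; and the block sizes $\abs{I_n}=h(n)$ are calibrated exactly so that the per-block estimate $h(n)\,2^{-e(d_n)}\le\sum_{m\in I_n}m^{-2}$ telescopes into the convergent tail $\sum_{m\ge a_N}m^{-2}$. I expect the verification that $\psi(\varphi')\in\scrN^{e^*}$ --- this convergence estimate, combined with the correct application of Lemma~\ref{hausinfty} --- to be the substantive step, the Galois--Tukey bookkeeping being routine.
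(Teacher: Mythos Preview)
Your proposal is correct and follows essentially the same approach as the paper: both build a Galois--Tukey morphism $\Cov(\scrN^{e^*})\to\wLc(c,h)$ whose forward leg reads the initial segment $x\restriction\floor{\log_2 c(n)}$ (the paper via an abstract injection $\iota_n$, you via explicit binary encoding) and whose backward leg sends a slalom to the $[\sigma]_\infty$-style set of reals hit infinitely often, with the key estimate $\sum_{m\ge N}2^{-e(g_{c,h}(m))}\le\sum_{m\ge N}m^{-2}\to 0$ and Lemma~\ref{hausinfty} giving membership in $\scrN^{e^*}$. The only cosmetic difference is that the paper indexes its cover by the positions $m\in\bigcup_{n\ge N}I_n$ directly, whereas you first group by $n$ and then unfold $h(n)\,2^{-e(d_n)}\le\sum_{m\in I_n}m^{-2}$; these are the same computation.
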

	\begin{proof}
		This proof is based on \cite[Lemma 2.4]{klausner2019different}.
		We construct a Galois-Tukey morphism $(\varphi_-, \varphi_+) \from \Cov(\scrN^{e*}) \to \wLc(c, h)$.
		For each $n \in \omega$, let $\iota_n \from 2^{\floor{\log_2 c(n)}} \to c(n)$ be an injective map.
		Define $\varphi_-$ by $\varphi_-(y)(n) = \iota_n(y \upharpoonright \floor{\log_2 c(n)})$.
		For $S \in S(c, h)$, enumerate the members of $S$ by $S = \{ m^S_{n, k} : k \in I_n \}$.
		For $k \in I_n$, put
		\[
		\sigma_S(k) = \begin{cases}
			\iota_n^{-1}(m^S_{n, k}) & \text{(if $m^S_{n, k} \in \range \iota_n$)} \\
			(0)^{\floor{\log c(n)}} & \text{(otherwise).}
		\end{cases}
		\]
		Here $(0)^{\floor{\log c(n)}}$ denotes the zero sequence of length $\floor{\log c(n)}$.
		Define $\varphi_+(S) = [\sigma_S]_\infty$.
		
		Then clearly $\varphi_-(y) \in^\infty S \rightarrow y \in \varphi_+(S)$.
		Moreover, we have
		\begin{align*}
			\Haus_\infty^{e^*}([\sigma_S]_\infty) &= \Haus_\infty^{e^*}(\bigcap_n \bigcup_{m \ge n} [\sigma_S(m)]) \\
			&\le \Haus_\infty^{e^*}(\bigcup_{m \ge n} [\sigma_S(m)]) \\
			&\le \sum_{m \ge n} \Haus_\infty^{e^*}([\sigma_S(m)]) \\
			&\le \sum_{m \ge n} 2^{-e(g_{c,h}(m))} \\
			&\le \sum_{m \ge n} 1/m^2 \to 0\ (n \to \infty).
		\end{align*}
		So $[\sigma_S]_\infty \in \scrN^{e^*}$ by Lemma \ref{hausinfty}.
	\end{proof}

	\begin{lem}
		Let $e, g \in \omega^\omega$.
		Suppose that $e(g(i)) \ge 2i$ for all but finitely many $n$.
		Then $\scrJ_g \subset \scrN^{e^*}$.
	\end{lem}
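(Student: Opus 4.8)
The plan is to reduce everything to a single diameter estimate for the cylinders appearing in $[\sigma]_\infty$. Fix $A \in \scrJ_g$ and choose $\sigma \in (\twototheltomega)^\omega$ with $\height\sigma = g$ and $A \subset [\sigma]_\infty$. Since $\Haus^{e^*}$ is monotone (any cover of a superset covers the set), $\scrN^{e^*}$ is closed under subsets, so it suffices to prove $[\sigma]_\infty \in \scrN^{e^*}$; and by Lemma~\ref{hausinfty} it is in turn enough to show $\Haus^{e^*}_\infty([\sigma]_\infty) = 0$.

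Next I would exploit the cover $\{[\sigma(n)] : n \ge N\}$ of $[\sigma]_\infty$, which is legitimate for every $N \in \omega$ because $[\sigma]_\infty = \bigcap_N \bigcup_{n \ge N} [\sigma(n)]$. For each $n$, since $\abs{\sigma(n)} = g(n)$, the cylinder $[\sigma(n)]$ has diameter exactly $2^{-g(n)}$ in the metric $d$ on $2^\omega$ (two sequences extending $\sigma(n)$ agree on the first $g(n)$ coordinates, and this bound is attained). Hence $e^*(\diam[\sigma(n)]) = e^*(2^{-g(n)}) = 2^{-e(g(n))}$ straight from the definition of $e^*$; note that every diameter occurring here is an exact power of $2$, so the linear-interpolation clause of the definition never intervenes. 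Therefore $\Haus^{e^*}_\infty([\sigma]_\infty) \le \sum_{n \ge N} 2^{-e(g(n))}$.

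Finally, the hypothesis supplies an $i_0$ with $e(g(i)) \ge 2i$ for all $i \ge i_0$, so for $N \ge i_0$ the tail is bounded by $\sum_{n \ge N} 2^{-2n} = \tfrac{4}{3}\cdot 4^{-N}$, which tends to $0$ as $N \to \infty$. This gives $\Haus^{e^*}_\infty([\sigma]_\infty) = 0$, and Lemma~\ref{hausinfty} then yields $\Haus^{e^*}([\sigma]_\infty) = 0$, completing the argument.

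I do not anticipate a genuine obstacle: this is essentially the "easy direction" dual to Lemma~\ref{neandjh}, and the geometry of $2^\omega$ makes the diameter computation immediate. The only two points that want a careful line are (i) reindexing the set $\{n : n \ge N\}$ as an $\omega$-sequence so that the cover literally matches the definition of $\Haus^{e^*}_\infty$, and (ii) handling the finitely many $i$ on which the hypothesis may fail, which is why one restricts to $N \ge i_0$ rather than taking $N = 0$.
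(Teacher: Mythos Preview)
Your proof is correct and follows essentially the same route as the paper: both arguments cover $[\sigma]_\infty$ by the cylinders $[\sigma(n)]$, compute $e^*(\diam[\sigma(n)]) = 2^{-e(g(n))}$, bound the tail by a geometric series via the hypothesis $e(g(i)) \ge 2i$, and invoke Lemma~\ref{hausinfty}. The only cosmetic difference is in dispatching the finitely many bad indices: the paper lengthens the first few $\sigma(i)$ so that the full sum $\sum_{i \in \omega} 2^{-e(\abs{\sigma(i)})}$ is already below a given $\varepsilon$, whereas you instead pass to tail covers $\{[\sigma(n)] : n \ge N\}$ and let $N \to \infty$; your version is arguably tidier since it avoids touching $\sigma$.
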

	\begin{proof}
		Let $A \in \scrJ_g$.
		Then we can take $\sigma \in (\twototheltomega)^\omega$ such that $\height \sigma = g$ and $A \subset [\sigma]_\infty$.
		Let $\varepsilon > 0$.
		Now we have
		\[
		(\forall^\infty i) (e(\abs{\sigma(i)}) = e(g(i)) \ge 2i).
		\]
		So
		\[
		(\forall^\infty i) (\ 2^{-e(\abs{\sigma(i)})} \le 2^{-2i} \le 2^{-i} \varepsilon).
		\]
		
		Modifying the first finitely many terms in $\sigma$, we have
		\[ (\forall i) (2^{-e(\abs{\sigma(i)})} \le 2^{-i} \varepsilon).\]
		So
		\[ \sum_i 2^{-e(\abs{\sigma(i)})} \le \varepsilon.\]
		Thus, $\Haus^{e^*}_\infty(A) \le \epsilon$. Since $\epsilon > 0$ is arbitrary, we have $A \in \scrN^{e^*}$ by Lemma \ref{hausinfty}.
	\end{proof}
	
	\begin{cor}
		\begin{enumerate}
			\item For every gauge function $f$, there is an increasing function $g\in \omega^\omega$ such that $\scrI_g \subset \scrN^f$.
			\item For every increasing function $g \in \omega^\omega$, there is a gauge function $f$ such that $\scrN^f \subset \scrI_g$. \qed
		\end{enumerate}
	\end{cor}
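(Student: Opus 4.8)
The plan is to prove the two parts separately. For part (2) I will use Lemma~\ref{neandjh} in the cheap way: construct $e$ and $h$ with $h\gg g$ and $e(l)\le\min\{n:l<h(2^n)\}$ for all $l$, so that $\scrN^{e^*}\subset\scrJ_h\subset\scrI_g$, and then simply take $f=e^*$ (which is a gauge function by its very definition), so that $\scrN^f\subset\scrI_g$ is literally the inclusion just produced. For part (1) I will argue directly with coverings — essentially the covering estimate underlying the unnamed lemma that precedes the corollary — since an arbitrary gauge function need not be comparable to any $e^*$ in a way that pins down $\scrN^f$ exactly, whereas the only fact I actually need is that the tail families $\bigcup_{n\ge N}[\sigma(n)]$ have small $f$-mass when $\height\sigma$ grows fast enough.

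For part (1), fix a gauge function $f$. Since $\lim_{x\to0}f(x)=0$, I choose an increasing $g\in\omega^\omega$ with $f(2^{-g(n)})\le 2^{-n}$ for every $n$, so that $\sum_n f(2^{-g(n)})$ converges. I claim $\scrI_g\subset\scrN^f$. Take $A\in\scrI_g$; then $A\in\scrJ_h$ for some $h\gg g$, so there is $\sigma\in(\twototheltomega)^\omega$ with $\height\sigma=h$ and $A\subset[\sigma]_\infty=\bigcap_N\bigcup_{n\ge N}[\sigma(n)]$. Since $h\gg g$ gives $h\ge^* g$ (take $k=1$ in the definition of $\ll$), we have $|\sigma(n)|=h(n)\ge g(n)$, hence $f(2^{-h(n)})\le f(2^{-g(n)})$, for all but finitely many $n$, and $h(n)\to\infty$. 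Thus for every $\delta>0$ there is $N$ with $2^{-h(n)}\le\delta$ and $h(n)\ge g(n)$ whenever $n\ge N$, so $\{[\sigma(n)]:n\ge N\}$ is a cover of $A$ by sets of diameter $\le\delta$ and
\[
\Haus^f_\delta(A)\le\sum_{n\ge N}f(2^{-h(n)})\le\sum_{n\ge N}f(2^{-g(n)}).
\]
As $\delta\to 0$ the smallest admissible $N$ tends to $\infty$, and the tails of the convergent series tend to $0$; hence $\Haus^f(A)=\lim_{\delta\to0}\Haus^f_\delta(A)=0$, i.e.\ $A\in\scrN^f$.

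For part (2), fix an increasing $g\in\omega^\omega$ and set $h(n)=g(n^n)$. Then $h$ is nondecreasing and unbounded, and $h\gg g$: for each $k$ and each $n\ge k$ we have $n^k\le n^n$, so $g\circ\pow_k(n)=g(n^k)\le g(n^n)=h(n)$, giving $g\circ\pow_k\le^* h$. Define $e(l)=\min\{n:l<h(2^n)\}$; since $h$ is unbounded, $e$ is a well-defined nondecreasing member of $\omega^\omega$ tending to $\infty$, so $e^*$ is defined, and trivially $e(l)\le\min\{n:l<h(2^n)\}$ for all $l$. Hence Lemma~\ref{neandjh} gives $\scrN^{e^*}\subset\scrJ_h$. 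Taking $f=e^*$, we obtain $\scrN^f=\scrN^{e^*}\subset\scrJ_h\subset\bigcup_{h'\gg g}\scrJ_{h'}=\scrI_g$, as required.

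I do not expect a genuine obstacle: the statement is a repackaging of Lemma~\ref{neandjh} together with the covering bound behind the preceding lemma. The two points that need a little care are: in (1), that the cutoff $N$ may be chosen as a function of $\delta$ so that the tail cover is simultaneously fine enough (all its cylinders have diameter $\le\delta$) and far enough out (past the point where $h\ge g$), making the estimate uniform in $\delta$; and in (2), that the single function $h(n)=g(n^n)$ really dominates every composite $g\circ\pow_k$, which is the only place the quantifier over all $k$ in ``$\ll$'' is used.
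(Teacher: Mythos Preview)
Your proof is correct and follows essentially the paper's intended route: the corollary is marked \qed because it is meant to be read off from Lemma~\ref{neandjh} and the lemma immediately preceding it, and that is exactly what you do. For part~(2) you invoke Lemma~\ref{neandjh} verbatim after manufacturing a single $h\gg g$ and the canonical $e=e_h$, which is the paper's approach. For part~(1) you inline the covering estimate of the preceding lemma rather than pass through an auxiliary $e^*$; this is a harmless (and arguably cleaner) variant, since choosing $g$ with $f(2^{-g(n)})\le 2^{-n}$ is precisely the condition that makes the tail sums vanish, and your observation that $A\subset[\sigma]_\infty$ lets you push the starting index $N$ to infinity as $\delta\to 0$ is the same mechanism as in the lemma's proof.
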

	
	\begin{thm}\label{iidhdznull}
		$\scrI_\id \subsetneq \HDZ$.
	\end{thm}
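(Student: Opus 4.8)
The plan is to prove the two halves of the statement separately: first the inclusion $\scrI_\id \subset \HDZ$, and then the existence of a witness in $\HDZ \setminus \scrI_\id$.

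For $\scrI_\id \subset \HDZ$, fix $g$ with $\id \ll g$ and $A \in \scrJ_g$, and pick $\sigma \in (\twototheltomega)^\omega$ with $\height\sigma = g$ and $A \subset [\sigma]_\infty$. Since $\id \ll g$ forces $g(n) \ge n$ for all but finitely many $n$, for such $n$ the clopen set $[\sigma(n)]$ has diameter $2^{-g(n)} \le 2^{-n}$, and for each $N$ the family $\{[\sigma(n)] : n \ge N\}$ covers $[\sigma]_\infty$. Hence for every $s > 0$ and all sufficiently large $N$,
\[
\Haus^s_{2^{-N}}([\sigma]_\infty) \le \sum_{n \ge N} 2^{-s g(n)} \le \sum_{n \ge N} 2^{-sn},
\]
and the last sum tends to $0$ as $N \to \infty$; so $\Haus^s([\sigma]_\infty) = 0$ and a fortiori $\Haus^s(A) = 0$. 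As $s > 0$ was arbitrary, $A \in \bigcap_{s>0}\scrN^s = \HDZ$. (Alternatively one could invoke the earlier lemma giving $\scrJ_g \subset \scrN^{e^*}$ whenever $e(g(i)) \ge 2i$ eventually, applied with $e(k) = \floor{sk}$.)

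For strictness I would use the explicit set, with $S = \{2^k : k \in \omega\}$,
\[
A = \{ x \in 2^\omega : x(n) = 0 \text{ for all } n \notin S \}.
\]
This $A$ is closed and homeomorphic to $2^\omega$ via $x \mapsto (x(2^k))_{k\in\omega}$; let $\mu$ be the Borel probability measure on $A$ corresponding to the uniform measure on $2^\omega$ under this homeomorphism. First, $\dimh A = 0$: the depth-$N$ cylinders meeting $A$ number $2^{\abs{S \cap N}} \le 2N$ and each has diameter $2^{-N}$, so $\Haus^s_{2^{-N}}(A) \le 2N \cdot 2^{-sN} \to 0$ for every $s > 0$, whence $A \in \HDZ$. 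Next, suppose toward a contradiction that $A \in \scrJ_g$ with $\id \ll g$, witnessed by $\sigma$ with $\height\sigma = g$ and $A \subset [\sigma]_\infty$. For each $n$ the set $[\sigma(n)] \cap A$ is either empty or a basic clopen subset of $A$ fixing exactly the coordinates $x(2^k)$ with $2^k < g(n)$, of which there are $\abs{S \cap g(n)} = \ceil{\log_2 g(n)}$; hence $\mu([\sigma(n)] \cap A) \le 2^{-\abs{S \cap g(n)}} \le 1/g(n)$. Since $\id \ll g$ gives $g(n) \ge n^2$ for all but finitely many $n$, $\sum_n \mu([\sigma(n)] \cap A) < \infty$, so by the Borel--Cantelli lemma $\mu(\{x \in A : (\exists^\infty n)\, \sigma(n) \subset x\}) = 0$, i.e.\ $\mu([\sigma]_\infty \cap A) = 0 < 1 = \mu(A)$, contradicting $A \subset [\sigma]_\infty$. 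Thus $A \notin \scrJ_g$ for every $g \gg \id$, so $A \in \HDZ \setminus \scrI_\id$.

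The two covering estimates and the identification of $[\sigma(n)] \cap A$ inside the homeomorphic copy of $2^\omega$ are routine; the one genuine idea is to separate the ideals by a set that is $\Haus^s$-null for every $s$ yet carries a probability measure under which a fast-growing $\sigma$ is hit only summably often, so that Borel--Cantelli keeps it out of every $\scrJ_g$ with $g \gg \id$. The only points needing a little care are the non-monotonicity of a general $g$ with $\id \ll g$ and the off-by-one between $\abs{S \cap g(n)}$ and $\log_2 g(n)$, both harmless.
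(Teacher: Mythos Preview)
Your proof is correct. The inclusion argument is essentially the same as the paper's: both bound $\Haus^s$ of $[\sigma]_\infty$ by the tail sum $\sum_{n\ge N} 2^{-s\,g(n)}$, using that $g$ eventually dominates any polynomial.

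For the strictness you take a genuinely different route. The paper chooses the set of reals that are constant on each interval $I_n=[n^2,(n+1)^2)$, builds the natural tree $T$ of approximations, and argues by \emph{self-similarity}: any $\sigma$ with $\height\sigma\gg\id$ covering $A$ pulls back along the bijection $2^{<\omega}\to T$ to a $\tau$ with $\height\tau=\sqrt{\height\sigma}\gg\id$ covering all of $2^\omega$, contradicting $2^\omega\notin\scrI_\id$. Your witness is instead the set of reals supported on $\{2^k:k\in\omega\}$, and you use the natural probability measure on it together with Borel--Cantelli: since $\mu([\sigma(n)]\cap A)\le 1/g(n)\le 1/n^2$ eventually, $[\sigma]_\infty$ has $\mu$-measure zero. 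Your argument is arguably more portable---it works for any sparse index set $S$ with $|S\cap m|\lesssim\log m$ and needs no prior fact about $\scrI_\id$---while the paper's argument highlights the scaling structure $\height\tau=\sqrt{\height\sigma}$ that makes the $\ll$ relation natural here. Both are short and clean; the minor caveats you flag (possible non-monotonicity of $g$, the off-by-one in $|S\cap g(n)|$ versus $\log_2 g(n)$) are indeed harmless.
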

	\begin{proof}
		To show $\scrI_\id \subset \HDZ$, let $A \in \scrI_\id$.
		Then we can take $f \gg \id$ and $\sigma \in (\twototheltomega)^\omega$ such that $A \subset [\sigma]_\infty$ and $\height \sigma = f$.
		Let $s, \varepsilon > 0$.
		By $f \gg \id$,
		\[
		(\forall^\infty i) \left(f(i) \ge i^2 \ge \frac{i+\log_2 (1/\epsilon)}{s}\right).
		\]
		Take $i_0 \in \omega$ such that $
		(\forall i \ge i_0) (f(i) \ge \frac{i+\log_2 (1/\epsilon)}{s})$. Now define $\tau \in (\twototheltomega)^\omega$ as
		\[
		\tau(i) = \begin{cases} (0)^{\ceil{(i+\log_2 (1/\varepsilon))/s}} & (i < i_0), \\  \sigma(i) & (i \ge i_0). \end{cases}
		\]
		If $x \in 2^\omega$ satisfies $(\exists^\infty i) (\sigma(i) \subset x)$ then $(\exists i) (\tau(i) \subset x)$. Thus $A \subset [\sigma]_\infty \subset \bigcup_i [\tau(i)]$.
		
		Also, by $|\tau(i)| \ge (i+\log_2 (1/\varepsilon))/s$, $2^{-|\tau(i)| s} \le \varepsilon / 2^i$.
		Thus we have $\sum_i 2^{-|\tau(i)| s} \le \varepsilon$.
		Therefore $A \in \HDZ$.
		
		For $\HDZ \setminus \scrI_{\id} \ne \emptyset$, take $A = \{ x \in 2^\omega : (\forall n \in \omega) \text{($x \upharpoonright I_n$ is constant)}\}$, where $I_n = [n^2, (n+1)^2)$.
		To show $A \in \HDZ \setminus \scrI_\id$, first define a tree $T$ as follows:
		\begin{align*}
			T_0 &= \{()\}, \\
			T_{n+1} &= \{ t \append (b)^{|I_n|} : t \in T_n, b \in 2 \}, \\
			T &= \bigcup_{n} T_n \downarrow.
		\end{align*}
		Here $T_n \downarrow$ denotes the downward closure of $T_n$.
		Clearly, the paths through $T$ is $A$.

		Note that 
		\[A = \bigcap_n \bigcup_{\sigma \in T_n} [\sigma].
		\]
		Let $s > 0$. Then we have
		\begin{align*}
			\Haus^s_\infty(A) &\le \Haus^s_\infty\left(\bigcup_{\sigma \in T_n} [\sigma]\right) \\
			&\le 2^{n} \cdot 2^{-n^2s} \\
			&\to 0 \text{ (as $n \to \infty$)}.
		\end{align*}
		So we get $\Haus^s(A) = 0$ by Lemma \ref{hausinfty}. Since $s > 0$ is arbitrary, we have $\dimh(A) = 0$. So $A \in \HDZ$.
		
		To show $A \not \in \scrI_\id$, assume that $A \in \scrI_\id$.
		Then we can take $\sigma \in (\twototheltomega)^\omega$ such that $\height \sigma \gg \id$ and $A \subset [\sigma]_\infty$. We may assume that $\range \sigma \subset T$.
		Take the natural bijection $\varphi\from \twototheltomega \to T$.
		Considering $\tau(n) = \varphi^{-1}(\sigma(n))$, we get $2^\omega \subset [\tau]_\infty$.
		Moreover, since $\varphi$ maps a node whose length is $n$ into a node whose length is $n^2$,
		\[
		\height \tau = \sqrt{\height \sigma} \gg \id.
		\]
		This implies $2^\omega \in \scrI_\id$, contradiction.
		Thus we get $A \not \in \scrI_\id$.
	\end{proof}
	
	\section{Additivity number and cofinality of $\HDZ$}
	
	\begin{fact}[{{\cite[Theorem 534B]{fremlin2008measure}}}]
		For every $0 < s < 1$, there is a Galois-Tukey isomorphism $\Cof(\scrN^s) \simeq \Cof(\nul)$. In particular $\cof(\scrN^s) = \cof(\nul)$ and $\add(\scrN^s) = \add(\nul)$.
	\end{fact}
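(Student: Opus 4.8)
Since this is Theorem~534B of \cite{fremlin2008measure}, I only describe the strategy; in particular the implicit passage between $[0,1]$ and $2^\omega$ is routine, in the spirit of Lemma~\ref{intervalandr} (the binary expansion map $2^\omega \to [0,1]$ is $1$-Lipschitz for $d$ and carries a length-$k$ basic clopen set onto a dyadic interval of length $2^{-k}$, so $s$-dimensional covers transfer both ways with weights changed by at most a factor $2$). The plan is to show that $\Cof(\nul)$ and $\Cof(\scrN^s)$ are both Galois--Tukey equivalent to a single combinatorial relational system $\Lc(c, h)$ with $h$ nondecreasing to infinity and $c$ increasing fast (say $\sum_n h(n)/c(n) < \infty$); since any two such systems are mutually GT-morphic by the usual blocking argument, this yields morphisms $\Cof(\scrN^s) \to \Cof(\nul)$ and back, and hence $\cof(\scrN^s) = \cof(\nul)$ and $\add(\scrN^s) = \add(\nul)$ by \cite[Theorem 4.9]{blass2010combinatorial}.

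For $\nul$ this GT-equivalence is Bartoszy\'nski's characterisation of $\add(\nul)$ and $\cof(\nul)$, and the point is to rerun that analysis for $\scrN^s$ with the Hausdorff content replacing Lebesgue measure. By Lemma~\ref{hausinfty}, $A \in \scrN^s$ iff $\Haus^s_\infty(A) = 0$; approximating each witnessing cover by basic clopen sets (in $2^\omega$ a set of diameter $2^{-k}$ lies inside some $[\sigma]$ with $\abs{\sigma} = k$) and intersecting over the levels shows that $\scrN^s$ has a cofinal family of $G_\delta$ sets of the form $\bigcap_n \bigcup_i [\sigma_n(i)]$ with $\sum_i 2^{-s\abs{\sigma_n(i)}} \le 2^{-n}$, just as $\nul$ has such a family with weight $2^{-\abs{\sigma_n(i)}}$. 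The key observation is that the $s$-weight of a length-$k$ basic clopen set is $(2^{-k})^s = 2^{-sk}$, still geometric in $k$: after rescaling lengths by $1/s$ (at level $n$ only the basic sets of length roughly $n/s$ matter --- longer ones are too cheap, shorter ones too expensive), choosing a cofinal family of such cover-sequences under inclusion becomes a localisation problem $\Lc(c,h)$ of the shape above, and tracking the multiplicative constants gives the two morphisms.

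The main obstacle is that $\Haus^s$, unlike Lebesgue measure, is not $\sigma$-finite on $2^\omega$ when $s<1$: a set of finite $\Haus^s$-measure has Hausdorff dimension $\le s$, hence so does a countable union of such, while $\dimh(2^\omega) = 1$ by Proposition~\ref{dimofcantoris1}; in fact there are Lebesgue-null sets of full Hausdorff dimension, and such a set lies in \emph{no} member of $\scrN^s$, all of whose members have dimension $\le s$. So the isomorphism cannot be induced by a measure isomorphism or a metric map --- it is genuinely an isomorphism of the cofinal structures, to be extracted from the cover descriptions above. Concretely, the delicate points are: adapting the steps of Bartoszy\'nski's argument that use $\sigma$-additivity of Lebesgue measure to the merely countably subadditive, non-$\sigma$-finite content $\Haus^s_\infty$; and synchronising the per-level weight budget $2^{-n}$ with the per-basic-set weights $2^{-s\abs{\sigma}}$ via the factor-$2$ scaling between consecutive levels, so that the parameters $c,h$ one obtains remain in the class above. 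That synchronisation, together with the content-versus-measure bookkeeping, is where the real work lies.
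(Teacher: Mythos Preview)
The paper does not give its own proof of this statement: it is recorded as a \emph{Fact} with a bare citation to \cite[Theorem~534B]{fremlin2008measure} and no argument. So there is nothing in the paper to compare your sketch against.

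As for the sketch itself, the overall strategy --- reduce both $\Cof(\nul)$ and $\Cof(\scrN^s)$ to localisation systems $\Lc(c,h)$ via Bartoszy\'nski-style cover bookkeeping, then use that all such systems are mutually GT-equivalent --- is the right shape, and your identification of the non-$\sigma$-finiteness of $\Haus^s$ as the obstacle to a cheap transfer is apt. Two remarks. First, the sentence ``there are Lebesgue-null sets of full Hausdorff dimension, and such a set lies in no member of $\scrN^s$'' is slightly garbled: since $\scrN^s$ is hereditary, ``lies in no member of $\scrN^s$'' just means ``is not in $\scrN^s$'', which is immediate; the real point you want is that $\nul \not\subset \scrN^s$, so no inclusion of ideals induces the morphism and one must genuinely go through the combinatorics. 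Second, the sketch stops short of the actual work: ``tracking the multiplicative constants gives the two morphisms'' and ``that synchronisation \dots\ is where the real work lies'' acknowledge rather than discharge the burden. What is missing is the concrete definition of the maps $\varphi\colon \scrN^s \to \prod c$ and $\psi\colon S(c,h) \to \scrN^s$ (and their inverses), together with the verification that $\varphi(A) \in^* S \Rightarrow A \subset \psi(S)$. Since the paper only quotes the result, your sketch is already more than the paper offers, but as a standalone proof it remains a plan rather than an argument.
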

	
	\begin{lem}
		Let $\seq{I_n : n \in \omega}$ be a sequence of ideals over a set $X$.
		Let $I_{n+1} \subset I_n$ for every $n$ and $I = \bigcap_n I_n$.
		Suppose that $\add(I_n) = \kappa$ for every $n$.
		Then $\add(I) \ge \kappa$.
	\end{lem}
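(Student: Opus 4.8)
The plan is to unwind the definition of additivity and apply the hypothesis to each $n$ separately. Recall that for an ideal $J$ the quantity $\add(J) = \norm{\Cof(J)^\perp}$ unwinds to $\min\{\abs{\mathcal F} : \mathcal F \subseteq J \text{ and } \bigcup \mathcal F \notin J\}$, so the assertion $\add(J) \ge \kappa$ is equivalent to the statement: whenever $\mathcal F \subseteq J$ and $\abs{\mathcal F} < \kappa$, then $\bigcup \mathcal F \in J$. Also note that $I = \bigcap_n I_n$, being an intersection of ideals, is again an ideal, so $\add(I)$ is meaningful.

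With this reformulation the proof is immediate. I would fix an arbitrary family $\mathcal F \subseteq I$ with $\abs{\mathcal F} < \kappa$ and show $\bigcup \mathcal F \in I$. Fix $n \in \omega$. Since $\mathcal F \subseteq I \subseteq I_n$ and $\abs{\mathcal F} < \kappa = \add(I_n)$, the reformulation of additivity applied to $I_n$ gives $\bigcup \mathcal F \in I_n$. As $n$ was arbitrary, $\bigcup \mathcal F \in \bigcap_n I_n = I$. Hence no subfamily of $I$ of size less than $\kappa$ can have its union outside $I$, i.e.\ $\add(I) \ge \kappa$.

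There is no real obstacle here: the argument is a direct definitional unwinding, and in fact it uses neither that the sequence $\seq{I_n : n \in \omega}$ is decreasing nor that it is countable. The only points needing a moment's care are the equivalence between "$\add(J) \ge \kappa$" and "$(<\kappa)$-indexed unions of members of $J$ stay in $J$", and the trivial observation that an intersection of ideals is an ideal. I would include a remark that the same proof yields $\add\bigl(\bigcap_{i \in A} J_i\bigr) \ge \inf_{i \in A} \add(J_i)$ for an arbitrary family $\seq{J_i : i \in A}$ of ideals on $X$, since that is the actual content.
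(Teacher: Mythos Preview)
Your proof is correct and is essentially the same as the paper's: fix a family of size $<\kappa$ in $I$, use $\add(I_n)=\kappa$ to get the union into each $I_n$, and intersect. Your added remarks (that neither the decreasingness nor the countability of the sequence is needed, and the more general inequality $\add\bigl(\bigcap_i J_i\bigr)\ge\inf_i\add(J_i)$) are accurate observations not made explicit in the paper.
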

	\begin{proof}
		Let $\lambda < \kappa$.
		Take $\seq{X_\alpha : \alpha < \lambda}$ with each $X_\alpha \in I$.
		Then by the assumption $\bigcup_{\alpha < \lambda} X_\alpha \in I_n$ for all $n$.
		Thus we have $\bigcup_{\alpha < \lambda} X_\alpha \in I$.
	\end{proof}
	
	\begin{cor}
		$\add(\nul) \le \add(\HDZ)$.
	\end{cor}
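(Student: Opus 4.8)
The plan is to invoke the preceding Lemma with a carefully chosen countable decreasing sequence of ideals whose intersection is $\HDZ$. The natural ingredients are the ideals $\scrN^s$ together with the cited Fact that $\add(\scrN^s) = \add(\nul)$ for $0 < s < 1$ and the Remark that $\HDZ = \bigcap_{s>0}\scrN^s$; the only real point to notice is that the Lemma is stated for \emph{countably many} ideals, so one must first thin the continuum-indexed family $\{\scrN^s : s > 0\}$ down to a cofinal countable subfamily without changing the intersection.

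Concretely, I would put $I_n = \scrN^{1/(n+2)}$ for $n \in \omega$ and check the three hypotheses of the Lemma. First, the sequence is decreasing: for $0 < s_1 \le s_2$ and $x \in [0,1]$ one has $x^{s_1} \ge x^{s_2}$, so $\pow_{s_1} \ge \pow_{s_2}$ as gauge functions on $[0,1]$, hence $\Haus^{s_1} \ge \Haus^{s_2}$ on subsets of $2^\omega$ (every diameter in $2^\omega$ lies in $[0,1]$), and therefore $\scrN^{s_1} \subseteq \scrN^{s_2}$; taking $s_1 = 1/(n+3)$ and $s_2 = 1/(n+2)$ gives $I_{n+1} \subseteq I_n$. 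Second, $\bigcap_n I_n = \HDZ$: if $A \in \HDZ$ then $\Haus^{1/(n+2)}(A) = 0$ for every $n$ by the formula for $\dimh$, so $A \in I_n$ for all $n$; conversely, if $A \in \bigcap_n I_n$ then $\dimh(A) \le 1/(n+2)$ for every $n$, so $\dimh(A) = 0$ and $A \in \HDZ$. (This is exactly the Remark, using that $\{1/(n+2) : n \in \omega\}$ is cofinal toward $0$.) Third, since $1/(n+2) \le 1/2 < 1$ for all $n$, the cited Fact gives $\add(I_n) = \add(\scrN^{1/(n+2)}) = \add(\nul)$ for every $n$.

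With these verified, the Lemma applies with $\kappa = \add(\nul)$ and yields $\add(\HDZ) = \add\bigl(\bigcap_n I_n\bigr) \ge \add(\nul)$, which is the claim. I do not anticipate a genuine obstacle: the argument is essentially a substitution into the Lemma and the Fact, and the only step requiring any thought is the passage from the uncountable family $\{\scrN^s\}$ to the countable decreasing sequence $\langle I_n \rangle$, which the monotonicity of $s \mapsto \scrN^s$ makes routine.
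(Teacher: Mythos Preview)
Your proposal is correct and is exactly the intended argument: the paper states the corollary without proof immediately after the Lemma and the Fact, and your choice $I_n = \scrN^{1/(n+2)}$ together with the monotonicity $\scrN^{s_1} \subseteq \scrN^{s_2}$ for $s_1 \le s_2$ is precisely how one is meant to instantiate them.
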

	
	\begin{thm}
		Let $\seq{I_n : n \in \omega}$ be a decreasing sequence of $\sigma$-ideals over a set $X$ and $I = \bigcap_m I_m$.
		Suppose that for each $m$, there is a Galois--Tukey morphism $(\varphi_m, \psi_m) \colon \Cof(I_m) \to \Lc(\omega, 2^\id)$.
		Then there is a Galois--Tukey morphism $(\varphi, \psi) \colon \Cof(I) \to \Lc(\omega, 2^\id)$.
	\end{thm}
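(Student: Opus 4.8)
The plan is to amalgamate the countably many morphisms $(\varphi_m,\psi_m)$ into a single one by storing, at coordinate $n$, the finite tuple $\seq{\varphi_0(A)(n),\dots,\varphi_n(A)(n)}$ as one natural number. The point of using $\Lc(\omega,2^\id)$ is that a slalom of the permitted size $2^n$ at position $n$ is still wide enough to recover, for each fixed $m$ and all $n\ge m$, the value $\varphi_m(A)(n)$ inside a slalom of size $\le 2^n$ --- which is exactly the kind of input that $(\varphi_m,\psi_m)$ can consume.

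Concretely, I would fix an injection $c\from\omega^{<\omega}\to\omega$. Given $A\in I=\bigcap_m I_m$ we have $A\in I_m$ for every $m$, so each $\varphi_m(A)\in\omega^\omega$ is defined, and I would set
\[
\varphi(A)(n)=c\bigl(\seq{\varphi_0(A)(n),\varphi_1(A)(n),\dots,\varphi_n(A)(n)}\bigr).
\]
For the other component, given $S\in S(\omega,2^\id)$ (so $\abs{S(n)}\le 2^n$ for all $n$) and given $m\in\omega$, I would define $\hat S_m$ by
\[
\hat S_m(n)=\begin{cases} \{\,t(m): t\in\omega^{n+1},\ c(t)\in S(n)\,\} & \text{if } n\ge m,\\ \emptyset & \text{if } n<m.\end{cases}
\]
Since $c$ is injective, $\abs{\hat S_m(n)}\le\abs{S(n)}\le 2^n$, so $\hat S_m\in S(\omega,2^\id)$, and I would put $\psi(S)=\bigcap_{m\in\omega}\psi_m(\hat S_m)$.

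The verification splits in two. For $\psi(S)\in I$: each $\psi_m(\hat S_m)$ lies in $I_m$ and $\psi(S)\subseteq\psi_m(\hat S_m)$, so $\psi(S)\in I_m$ for every $m$ (ideals are downward closed), hence $\psi(S)\in\bigcap_m I_m=I$. For the morphism inequality: assume $\varphi(A)\in^* S$, say $\varphi(A)(n)\in S(n)$ for all $n\ge N$, and fix $m$. For every $n\ge\max(N,m)$ the tuple $t=\seq{\varphi_0(A)(n),\dots,\varphi_n(A)(n)}\in\omega^{n+1}$ satisfies $c(t)=\varphi(A)(n)\in S(n)$, so by construction $\varphi_m(A)(n)=t(m)\in\hat S_m(n)$; hence $\varphi_m(A)\in^*\hat S_m$, and by the morphism property of $(\varphi_m,\psi_m)$ we get $A\subseteq\psi_m(\hat S_m)$. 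As $m$ was arbitrary, $A\subseteq\psi(S)$, as required.

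The only step requiring care is the size accounting for the $\hat S_m$: it works because the $m$-th coordinate only ever needs to be read at positions $n\ge m$, where $S$ already has size $\le 2^n$, and because injectivity of $c$ keeps the coordinate projections from enlarging cardinality. Beyond that the argument is a routine amalgamation; incidentally it seems to use only that each $I_m$ is an ideal, not that the $I_m$ form a decreasing sequence of $\sigma$-ideals.
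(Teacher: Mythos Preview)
Your proof is correct and follows essentially the same approach as the paper: encode the tuple $\seq{\varphi_0(A)(n),\dots,\varphi_n(A)(n)}$ at position $n$ via a fixed coding of $\omega^{<\omega}$, then project each coordinate back out to obtain slaloms that can be fed to the $\psi_m$. The only difference is that the paper takes $\psi(S)=\bigcup_{m}\bigcap_{n\ge m}\psi_n(\PR_n(S))$ whereas you take the plain intersection $\psi(S)=\bigcap_m\psi_m(\hat S_m)$; your version is a mild simplification and, as you correctly note, does not use that the $I_m$ are decreasing or that they are $\sigma$-ideals.
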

	\begin{proof}
		Fix a bijection $\omega^{<\omega} \to \omega$ and let $\seq{a_0, \dots, a_n}$ denote the image of the $n$-tuple under this bijection.
		For $n \in \omega$, let $\pr_n \colon \omega \to \omega$ denote the $n$-th projection.
		Put 
		\[\PR_n \colon S(\omega, 2^\id) \to S(\omega, 2^\id); S \mapsto (\pr_n ``(S(n)) : n \in \omega).\]
		
		For $X \in I$, define $\varphi(X) \in \omega^\omega$ by
		\[
		\varphi(X)(m) = \seq{\varphi_0(X)(m), \dots, \varphi_m(X)(m)}.
		\]
		For $S \in S(\omega, 2^\id)$, define $\psi(S) \subset X$ by
		\[
		\psi(S) = \bigcup_{m \in \omega} \bigcap_{n \ge m} \psi_n(\operatorname{PR}_n(S)).
		\]
		Since $\varphi_n(\operatorname{PR}_n(S)) \in I_n$, we have $\psi(S) \in I$.
		Fix $X \in I$ and $S \in S(\omega, 2^\id)$ such that $\varphi(X) \in^* S$.
		Then
		\[
		(\forall^\infty i) (\seq{\varphi_0(X)(i), \dots, \varphi_i(X)(i)} \in S(i)).
		\]
		So
		\[
		(\forall^\infty i) (\forall n \le i) (\varphi_n(X)(i) \in \PR_n(S)(i)).
		\]
		Thus
		\[
		(\forall^\infty n) (\forall i \ge n) (\varphi_n(X)(i) \in \PR_n(S)(i)).
		\]
		So by the Galois-Tukeyness of $(\varphi_n, \psi_n)$,
		\[
		(\forall^\infty n) (X \subset \psi_n(\PR_n(S))).
		\]
		Thus, by the definition of $\psi$, we have
		\[
		X \subset \psi(S).
		\]
	\end{proof}
	
	\begin{cor}
		$\cof(\HDZ) \le \cof(\nul)$.
	\end{cor}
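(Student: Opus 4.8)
The plan is to present $\HDZ$ as a countable decreasing intersection of $\sigma$-ideals to which the preceding Theorem applies, and then feed in the Fremlin Fact together with Bartoszyński's localization characterization of $\cof(\nul)$.

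First I would record the monotonicity $\scrN^s \subset \scrN^{s'}$ for $0 < s < s' \le 1$: on $2^\omega$ every set used in a cover has diameter $\le 1$, so $\diam(C)^{s'} \le \diam(C)^s$, hence $\Haus^{s'}_\delta(A) \le \Haus^s_\delta(A)$ for every $A$ and $\delta$, and therefore $\Haus^{s'}(A) \le \Haus^s(A)$. Combined with the Remark that $\HDZ = \bigcap_{s > 0} \scrN^s$, this gives $\HDZ = \bigcap_{n \ge 2} \scrN^{1/n}$, so that $\seq{\scrN^{1/(n+2)} : n \in \omega}$ is a decreasing sequence of $\sigma$-ideals on $2^\omega$ whose intersection is $\HDZ$.

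Next, for each $n \ge 2$ I would produce a Galois--Tukey morphism $\Cof(\scrN^{1/n}) \to \Lc(\omega, 2^\id)$. Since $0 < 1/n < 1$, the Fact of Fremlin quoted above provides a Galois--Tukey isomorphism $\Cof(\scrN^{1/n}) \simeq \Cof(\nul)$, so it suffices to have a morphism $\Cof(\nul) \to \Lc(\omega, 2^\id)$. This is supplied by Bartoszyński's characterization of the null ideal: $\Cof(\nul)$ is Galois--Tukey equivalent to the localization relational system whose slaloms have any prescribed width tending to infinity, in particular to $\Lc(\omega, 2^\id)$; the same characterization gives $\norm{\Lc(\omega, 2^\id)} = \cof(\nul)$. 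Composing the isomorphism with this morphism (morphisms compose) yields the required morphisms $\Cof(\scrN^{1/(n+2)}) \to \Lc(\omega, 2^\id)$ for every $n \in \omega$.

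Finally I would apply the preceding Theorem to the sequence $\seq{\scrN^{1/(n+2)} : n \in \omega}$, obtaining a Galois--Tukey morphism $\Cof(\HDZ) \to \Lc(\omega, 2^\id)$; reading off norms with the basic Fact on Galois--Tukey morphisms then gives $\cof(\HDZ) = \norm{\Cof(\HDZ)} \le \norm{\Lc(\omega, 2^\id)} = \cof(\nul)$, which is the claim. The monotonicity of the $\scrN^s$ and the reindexing are routine; the only substantive ingredient is the identity $\cof(\nul) = \norm{\Lc(\omega, 2^\id)}$ together with the morphism $\Cof(\nul) \to \Lc(\omega, 2^\id)$, i.e.\ Bartoszyński's localization characterization of $\cof(\nul)$ with slalom width $2^\id$ in place of the usual $\id$, which is where I expect any real work to lie, although it is entirely standard.
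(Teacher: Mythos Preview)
Your argument is correct and is exactly the intended application: the paper states the corollary without proof because it follows immediately from the preceding Theorem applied to the decreasing sequence $\seq{\scrN^{1/(n+2)} : n \in \omega}$, using Fremlin's Fact to identify each $\Cof(\scrN^{1/n})$ with $\Cof(\nul)$ and Bartoszy\'nski's localization to pass to $\Lc(\omega,2^{\id})$. Your monotonicity computation and the reindexing are the routine details the paper suppresses.
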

	
	\section{Separating uniformity of $\nul$ and $\scrN^s$}
	
	\begin{thm}\label{nonhdzltnonnull}
		\begin{enumerate}
			\item For every forcing poset $P$ with Laver property and $s \in (0, 1)$, $P \forces 2^\omega \cap V \not \in \scrN^s$.
			\item For every $s \in (0, 1)$, it is consistent with ZFC that $\non(\scrN^s) < \non(\nul)$.
			\item For every $0 < s < d$ with $d \in \omega$, it is consistent with ZFC that $\non(\scrN_{\R^d}^s) < \non(\nul)$.
		\end{enumerate}
	\end{thm}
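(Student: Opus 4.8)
\emph{Part (1).} The plan is to argue by contradiction, and the point where $s<1$ enters is that an $s$-Hausdorff-small cover is forced to be very \emph{thin} at each scale. So suppose some $p\in P$ forces $2^\omega\cap V\in\scrN^s$. Since $\Haus^s_\infty\le\Haus^s$, this gives $p\forces\Haus^s_\infty(2^\omega\cap V)=0$, so, fixing $\varepsilon=1/2$ and replacing each piece of a witnessing cover by the cylinder of the same diameter, there is a $P$-name $\dot C\subseteq\twototheltomega$ with $p$ forcing ``$2^\omega\cap V\subseteq\bigcup_{\sigma\in\dot C}[\sigma]$ and $\sum_{\sigma\in\dot C}2^{-s\abs{\sigma}}<\varepsilon$''. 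For any increasing $\seq{n_j}$ in $V$ and any $j$, the block $\{\sigma\in\dot C: n_j\le\abs{\sigma}<n_{j+1}\}$ provably has fewer than $2^{sn_{j+1}}$ members, which is negligible compared with $2^{n_j}$ once $sn_{j+1}$ sits comfortably below $n_j$. I would therefore choose in $V$ a rapidly increasing sequence $0=n_0<n_1<\cdots$ with $sn_{j+1}\le n_j-2j-2$ for $j\ge1$ (possible exactly because $s<1$; one may take $n_{j+1}$ of order $n_j/\sqrt s$), let $\dot D_j$ name the $j$-th block, and --- coding only subsets of $\bigcup_{n_j\le k<n_{j+1}}2^k$ of size $<2^{sn_{j+1}}$, which $\dot D_j$ always is --- code $\dot D_j$ by a natural number $\dot g(j)<f(j)$ for a fixed $f\in V$.

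Now invoke the Laver property: below $p$ there are $q\le p$ and $H\in V$ with $\abs{H(j)}\le2^j$ and $q\forces(\forall j)\,\dot g(j)\in H(j)$. Let $\tilde D_j\in V$ be the union of the $\le 2^j$ blocks coded by $H(j)$; then $q\forces\dot D_j\subseteq\tilde D_j$, each member of $\tilde D_j$ has length in $[n_j,n_{j+1})$, and $\abs{\tilde D_j}<2^j\cdot 2^{sn_{j+1}}$ (for $j=0$, $H(0)$ is a singleton, so $q$ decides $\dot D_0=\tilde D_0$ and $\tilde D_0$ has weight $<\varepsilon$). Hence the $V$-set $B:=\bigcup_j\bigcup_{\sigma\in\tilde D_j}[\sigma]$ satisfies
\[
\mu(B)\;\le\;\mu\bigl(\bigcup\nolimits_{\sigma\in\tilde D_0}[\sigma]\bigr)+\sum_{j\ge1}\abs{\tilde D_j}\,2^{-n_j}\;<\;\varepsilon+\sum_{j\ge1}2^{\,j+sn_{j+1}-n_j}\;\le\;\varepsilon+\sum_{j\ge1}2^{-j-2}\;<\;1.
\]
Picking $x\in(2^\omega\setminus B)\cap V$, the condition $q$ forces that $x$ lies in no $[\sigma]$ with $\sigma\in\dot C=\bigcup_j\dot D_j$, contradicting $q\forces x\in\bigcup_{\sigma\in\dot C}[\sigma]$. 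So $P\forces 2^\omega\cap V\notin\scrN^s$. I expect the delicate step --- and the main obstacle --- to be the choice of $\seq{n_j}$: it must grow fast enough that the $2^j$-fold blow-up introduced by the Laver slalom does not push $\mu(B)$ up to $1$, yet still tend to infinity; both are available only because $s<1$.

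\emph{Part (2).} Over a ground model of CH, let $P$ be the countable support iteration of Mathias forcing of length $\omega_2$. Mathias forcing has the Laver property, which is preserved by countable support iterations of proper forcings, so $P$ has the Laver property and $P\forces\frakc=\omega_2$. By (1), $P\forces 2^\omega\cap V\notin\scrN^s$, hence $P\forces\non(\scrN^s)=\omega_1$. For $\non(\nul)$: a set $A$ of size $\omega_1$ in the extension lies (by the $\omega_2$-chain condition) in some $V[G_\alpha]$ with $\alpha<\omega_2$, so it suffices that $2^\omega\cap V[G_\alpha]$ be null in the final model. If $m\subseteq\omega$, with increasing enumeration $\seq{m_k}$, is the Mathias real added at stage $\alpha$, then $m$ has the Ramsey property over $V[G_\alpha]$ (for every infinite $D\in V[G_\alpha]$, either $m\subseteq^* D$ or $m\cap D$ is finite), so the measure-preserving map $\Phi\from 2^\omega\to 2^\omega$, $\Phi(x)(k)=x(m_k)$, sends every $x\in 2^\omega\cap V[G_\alpha]$ to an eventually constant real. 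Thus $2^\omega\cap V[G_\alpha]\subseteq\Phi^{-1}(E)$, where $E$ is the (null) set of eventually constant reals, and $\Phi^{-1}(E)$ is null; as nullness is absolute, $A$ is null in the final model. Therefore $P\forces\non(\nul)=\omega_2>\omega_1=\non(\scrN^s)$.

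\emph{Part (3).} Since Hausdorff measure does not depend on the ambient space, $\scrN^s_{[0,1]^d}$ is the trace of $\scrN^s_{\R^d}$ on $[0,1]^d$, so $\non(\scrN^s_{\R^d})\le\non(\scrN^s_{[0,1]^d})$, and it is enough to exhibit, in the model of (2), a set of size $\omega_1$ outside $\scrN^s_{[0,1]^d}$. Fix $\alpha$ with $1<\alpha<d/s$. As in the proof of Theorem \ref{replacingspaces}, applying Proposition \ref{coholderfromcantortointerval} coordinatewise and composing with the $(1/d)$-bi-H\"older map of Proposition \ref{biholdercantor} gives an $(\alpha/d)$-co-H\"older map $g\from 2^\omega\to[0,1]^d$; then $g^{-1}$ is $(d/\alpha)$-H\"older, so Proposition \ref{holderhaus}(1) yields $\Haus^s(g(2^\omega\cap V))\ge c\,\Haus^{s\alpha/d}(2^\omega\cap V)$ for some $c>0$. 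As $s\alpha/d<1$, part (1) gives $\Haus^{s\alpha/d}(2^\omega\cap V)\ne0$, so $g(2^\omega\cap V)\notin\scrN^s_{[0,1]^d}$, a set of size $\le\omega_1$. Beyond this bookkeeping with the H\"older exponents I foresee no further difficulty.
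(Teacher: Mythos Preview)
Your argument is correct, and Parts~(2) and~(3) proceed essentially as in the paper (you simply supply the well-known details for $\non(\nul)=\aleph_2$ that the paper omits). Part~(1), however, is organised differently. The paper enumerates the cover as a sequence $\dot\sigma$, observes via Lemma~\ref{evalfrombelow} that $\abs{\dot\sigma(n)}\ge(\log_2 n)/s-C_s=:\beta(n)$, truncates to $\dot\tau(n)=\dot\sigma(n)\upharpoonright\beta(n)$, and applies the Laver property with slalom width $n^{(1-s)/2}$; the resulting ground-model set $\hat X\supseteq[\dot\tau]_\infty$ is then shown to satisfy $\Haus^{(1+s)/2}(\hat X)=0$, whence $\hat X\neq 2^\omega$ since $\dimh(2^\omega)=1$. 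You instead block the cover by length intervals $[n_j,n_{j+1})$ chosen so that $sn_{j+1}\le n_j-2j-2$, apply the Laver property to the coded blocks with slalom width $2^j$, and show the resulting ground-model set has \emph{Lebesgue} measure below~$1$. Both approaches exploit that for $s<1$ an $s$-Hausdorff-thin cover is combinatorially sparse enough for the Laver slalom to absorb the blow-up, but they cash this out with different measures: the paper's route yields the sharper conclusion that the Laver-closure of any $\scrN^s$-cover already lies in $\scrN^{(1+s)/2}$, while your route is more elementary, avoiding the auxiliary Lemma~\ref{evalfrombelow} and any Hausdorff-measure computation in the estimate.
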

	
	\begin{lem}\label{evalfrombelow}
		Let $0 < s < 1$ and $\sigma \in (\twototheltomega)^\omega$.
		Assume that $\sum_n 2^{-|\sigma(n)| s} \le 1$ and $\sigma$ is in ascending order of length.
		Then $|\sigma(n)| \ge (\log_2 n) / s - C_s$, where $C_s > 0$ is a constant depending only $s$.
	\end{lem}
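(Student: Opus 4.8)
The plan is to exploit the monotonicity of $n \mapsto 2^{-\abs{\sigma(n)} s}$ that comes for free from the hypothesis that $\sigma$ lists strings in (non-strictly) ascending order of length. Fix $n \ge 1$. Since $\abs{\sigma(m)} \le \abs{\sigma(n)}$ for every $m \le n$ and $x \mapsto 2^{-xs}$ is nonincreasing, each of the $n+1$ terms $2^{-\abs{\sigma(0)} s}, \dots, 2^{-\abs{\sigma(n)} s}$ is at least $2^{-\abs{\sigma(n)} s}$. Hence, using the summability hypothesis,
\[
1 \ge \sum_{m \in \omega} 2^{-\abs{\sigma(m)} s} \ge \sum_{m \le n} 2^{-\abs{\sigma(m)} s} \ge (n+1)\, 2^{-\abs{\sigma(n)} s}.
\]

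From this I would read off $2^{\abs{\sigma(n)} s} \ge n+1 > n$, so $\abs{\sigma(n)} s > \log_2 n$, i.e. $\abs{\sigma(n)} > (\log_2 n)/s$. Thus the statement holds with $C_s = 0$; if one prefers a strictly positive constant, or wishes to avoid invoking $\log_2 0$ at the degenerate index $n = 0$, taking $C_s$ to be any fixed positive real (say $C_s = 1$) works, since for $n = 0$ the asserted bound $\abs{\sigma(0)} \ge -C_s$ is vacuous. Note the constant genuinely depends only on $s$ (through the divisor $s$ and, in this presentation, not at all otherwise).

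I do not expect a real obstacle here. The only points requiring a moment's care are the degenerate index $n = 0$, handled by the freedom in choosing $C_s$, and the remark that ``ascending order of length'' must be understood non-strictly, since there are only finitely many strings of each fixed length; but the argument uses only the non-strict inequality $\abs{\sigma(m)} \le \abs{\sigma(n)}$, so this causes no difficulty.
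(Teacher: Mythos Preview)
Your proof is correct and considerably more economical than the paper's. The paper argues that the constraint $\sum_n 2^{-\abs{\sigma(n)} s} \le 1$ forces at most $2^{ks}$ entries of length $k$ in $\sigma$, then compares $\abs{\sigma(n)}$ to the explicit staircase sequence
\[
\alpha = \seq{0,\ \underbrace{1,\dots,1}_{\ceil{2^s}},\ \underbrace{2,\dots,2}_{\ceil{2^{2s}}},\ \dots}
\]
and finally estimates $\alpha(n)$ from below via geometric partial sums, arriving at $\alpha(n) \ge (\log_2 n)/s - C_s$ with $C_s = 2 - \log_2(2^s-1)/s$ (adjusted for finitely many exceptional $n$). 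Your argument bypasses all of this: the single observation that a nonincreasing nonnegative sequence with sum at most $1$ has $(n+1)$-st partial sum at least $(n+1)$ times its $(n+1)$-st term immediately gives $2^{-\abs{\sigma(n)} s} \le 1/(n+1)$, hence the desired bound with $C_s = 0$ (or any positive constant, to absorb $n=0$). Your route also makes transparent that the hypothesis $s<1$ plays no role in this particular lemma---it is needed only downstream, in the proof of Theorem~\ref{nonhdzltnonnull}---and it yields a sharper constant than the paper's computation.
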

	\begin{proof}
		By the assumption, there are at most $2^{ks}$ elements of length $k$ in $\sigma$.
		So for all $n\in \omega$, $\abs{\sigma(n)} \ge \alpha(n)$, where
		\[
		\alpha = \seq{0, \underbrace{1, \dots, 1}_{\ceil{2^s} \text{ terms}}, \underbrace{2, \dots, 2}_{\ceil{2^{2s}} \text{ terms}}, \underbrace{3, \dots, 3}_{\ceil{2^{3s}} \text{ terms}}, \dots}.
		\]
		Thus
		\begin{align*}
			k &= \alpha(1 + \ceil{2^s} + \dots + \ceil{2^{(k-1)s}}).
		\end{align*}
		So
		\[
		n \ge 1 + \ceil{2^s} + \dots + \ceil{2^{(k-1)s}} \Rightarrow \alpha(n) \ge k.
		\]
		Now for some $k_0$ we have for all $k \ge k_0$
		\begin{align*}
			1 + \ceil{2^s} + \dots + \ceil{2^{(k-1)s}} &\le 1 + 2^s + \dots + 2^{(k-1)s} + k \\
			&\le 1 + 2^s + \dots + 2^{(k-1)s} + 2^{ks} \\
			&= (2^{(k+1)s}-1)/(2^s-1).
		\end{align*}
		So for $k \ge k_0$,
		\[
		n \ge (2^{(k+1)s}-1)/(2^s-1)  \Rightarrow \alpha(n) \ge k.
		\]
		That is
		\[
		\log_2((2^s-1) n + 1)/s-1 \ge k  \Rightarrow \alpha(n) \ge k.
		\]
		So
		\[
		\alpha(n) \ge \log_2((2^s-1) n + 1)/s-2,
		\]
		provided that $\log_2((2^s-1) n + 1)/s-1 \ge k_0$.
		
		Thus for all but finitely many $n$ we have
		\begin{align*}
			\alpha(n) &\ge \log_2((2^s-1) n + 1)/s-2 \\
			&\ge \log_2((2^s-1) n)/s-2 \\
			&= \log_2 n / s + \log_2 (2^s-1) / s - 2
		\end{align*}
		
		So putting $C_s = 2 - \log_2 (2^s-1) / s$ gives
		\[
		\alpha(n) \ge \log_2 n / s - C_s.
		\]
		By adjusting $C_s$, we can make the above inequality hold for all n.
	\end{proof}
	
	\begin{lem}\label{ncofhausiszero}
		Let $A \subset 2^\omega$ and $s > 0$.
		If $\Haus^s(A) = 0$, then there is $\sigma \in (\twototheltomega)^\omega$ such that
		\[
		\sum_n 2^{-\abs{\sigma(n)}\cdot s} \le 1 \AND A \subset [\sigma]_\infty.
		\]
	\end{lem}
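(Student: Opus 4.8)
The plan is to turn a sequence of increasingly efficient covers of $A$ into a single element $\sigma$ of $(\twototheltomega)^\omega$ by a diagonal enumeration. The key geometric feature of $2^\omega$ that makes this possible is that it is ultrametric: if $C \subset 2^\omega$ has $0 < \diam(C) \le 1$ and $k = \floor{\log_2(1/\diam(C))}$, then any two points of $C$ agree on their first $k$ coordinates, so $C \subset [t]$ for the common initial segment $t$ of length $k$; moreover $2^{-k} < 2\diam(C)$, hence $2^{-|t|\cdot s} < 2^s \diam(C)^s$. Thus, up to the constant $2^s$, sets of positive diameter in a cover may be replaced by basic clopen sets without inflating $\sum(\cdot)^s$.

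Assuming $A \ne \emptyset$ (the empty case being trivial), I would use $\Haus^s(A) = 0$, which gives $\Haus^s_\infty(A) = 0$ since $\Haus^s_\infty(A) \le \Haus^s(A)$, to fix for each $m \ge 1$ a cover $\seq{C^m_n : n \in \omega}$ of $A$ with $\sum_n \diam(C^m_n)^s < 2^{-m}/(2^s+1)$, discarding empty members. For each $C^m_n$ of positive diameter let $t^m_n$ be the cylinder-generating string of the previous paragraph, so $C^m_n \subset [t^m_n]$ and these strings contribute less than $2^s \cdot 2^{-m}/(2^s+1)$ to $\sum_n 2^{-|t^m_n|\cdot s}$. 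The remaining nonempty members are singletons $\{x\}$; for the $j$-th such member pick $t^m_n \subset x$ long enough that $2^{-|t^m_n|\cdot s} < 2^{-j-1}\cdot 2^{-m}/(2^s+1)$ --- possible precisely because the length of $t^m_n$ is unbounded from above --- so the singleton contributions sum to at most $2^{-m}/(2^s+1)$. Altogether, for each $m \ge 1$ we get $\seq{t^m_n : n \in \omega}$ in $\twototheltomega$ with $A \subset \bigcup_n [t^m_n]$ and $\sum_n 2^{-|t^m_n|\cdot s} < 2^{-m}$.

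Finally I would fix a bijection $p \from \omega \to \{(m,n) : m \ge 1,\ n \in \omega\}$ and set $\sigma(i) = t^m_n$ where $(m,n) = p(i)$. Then $\sum_i 2^{-|\sigma(i)|\cdot s} = \sum_{m \ge 1}\sum_n 2^{-|t^m_n|\cdot s} < \sum_{m \ge 1} 2^{-m} = 1$. And for $x \in A$: for every $m \ge 1$ there is $n(m)$ with $x \in [t^m_{n(m)}]$, i.e.\ $\sigma(p^{-1}(m,n(m))) \subset x$; since $p$ is injective the indices $p^{-1}(m,n(m))$ are pairwise distinct as $m$ ranges over the positive integers, so $\sigma(i) \subset x$ for infinitely many $i$, that is $x \in [\sigma]_\infty$. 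The only delicate point is the zero-diameter members of the covers, which no cylinder of bounded length can contain; but their lengths are free to be taken as large as we like, so a geometrically decaying share of the budget absorbs them, and I do not anticipate a real obstacle.
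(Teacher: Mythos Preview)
Your argument is correct and follows essentially the same route as the paper: for each $m$ take a cover of $A$ with $\sum \diam^s$ below $2^{-m}$ (up to a constant), replace each covering set by a basic cylinder of comparable diameter, and enumerate the resulting double array into a single $\sigma$; the geometric series gives $\sum_n 2^{-|\sigma(n)|s}\le 1$, and the infinitely many layers of covers give $A\subset[\sigma]_\infty$. The paper's version is slightly terser---it simply says ``expand each $C^n_m$ to a basic open set $[\sigma^n_m]$ so that its diameter does not change''---which, strictly speaking, is ill-posed when $C^n_m$ is a singleton; your explicit handling of the zero-diameter members by assigning them arbitrarily long stems with geometrically decaying budget is the honest way to close that small gap.
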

	\begin{proof}
		By $\Haus^s_\infty(A) = 0$, for each $n \in \omega$, we can take $\seq{C^n_m : m \in \omega}$ such that $\sum_m \diam(C^n_m)^s \le 2^{-(n+1)}$ and $A \subset \bigcup_{m \in \omega} C^n_m$.
		Expand each $C^n_m$ to basic open set $[\sigma^n_m]$ so that its diameter does not change.
		Let $\sigma \in (\twototheltomega)^\omega$ be an enumeration of $\seq{\sigma^n_m : n, m \in \omega}$.
		Then we have $\sum_n 2^{-\abs{\sigma(n)}\cdot s} \le 1$ and $A \subset [\sigma]_\infty$.
	\end{proof}
	
	\begin{lem}\label{covhdzisborel}
		For $0 < s < 1$, there is a Borel relational system $\Cov'(\scrN^s)$ that is equivalent to $\Cov(\scrN^s)$.
	\end{lem}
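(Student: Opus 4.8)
The plan is to replace the unwieldy family $\scrN^s$ by a Polish space of ``good coverings''. I would work in $(\twototheltomega)^\omega$, which is Polish (indeed homeomorphic to $\omega^\omega$), and single out the subset
\[
\Sigma_s = \Bigl\{ \sigma \in (\twototheltomega)^\omega : \sum_{n} 2^{-\abs{\sigma(n)}\cdot s} \le 1 \Bigr\}.
\]
Since for each $N$ the partial-sum map $\sigma \mapsto \sum_{n<N} 2^{-\abs{\sigma(n)}\cdot s}$ is continuous, $\Sigma_s$ is an intersection of closed sets, hence a closed subspace, hence Polish. I would then set $\Cov'(\scrN^s) = (2^\omega, \Sigma_s, R)$ with $R = \{(x,\sigma) : x \in [\sigma]_\infty\}$; since $R = \bigcap_N \bigcup_{n \ge N}\{(x,\sigma) : \sigma(n) \subset x\}$ and each inner set is clopen, $R$ is $\Pi^0_2$, so $\Cov'(\scrN^s)$ is a Borel relational system.

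The first substantive point is to see that $\Sigma_s$ faithfully describes $\scrN^s$ in both directions. One direction is exactly Lemma \ref{ncofhausiszero}: every $A \in \scrN^s$ is contained in $[\sigma]_\infty$ for some $\sigma \in \Sigma_s$. For the converse, given $\sigma \in \Sigma_s$, I would estimate for each $N$
\[
\Haus^s_\infty([\sigma]_\infty) \le \Haus^s_\infty\Bigl(\bigcup_{n \ge N}[\sigma(n)]\Bigr) \le \sum_{n\ge N}\diam([\sigma(n)])^s = \sum_{n\ge N}2^{-\abs{\sigma(n)}\cdot s},
\]
using monotonicity of $\Haus^s_\infty$ and the cover $\{[\sigma(n)] : n \ge N\}$; since the full series converges, its tails tend to $0$, whence $\Haus^s_\infty([\sigma]_\infty)=0$ and therefore $[\sigma]_\infty \in \scrN^s$ by Lemma \ref{hausinfty}.

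It then remains to exhibit Galois--Tukey morphisms in both directions, each using $\id_{2^\omega}$ as the first coordinate. From $\Cov(\scrN^s)$ to $\Cov'(\scrN^s)$: take $\psi(\sigma) = [\sigma]_\infty$, which lands in $\scrN^s$ by the previous paragraph; if $x \in [\sigma]_\infty$ then $x \in \psi(\sigma)$, which is the morphism condition. From $\Cov'(\scrN^s)$ to $\Cov(\scrN^s)$: using Lemma \ref{ncofhausiszero}, choose for each $A \in \scrN^s$ some $\sigma_A \in \Sigma_s$ with $A \subset [\sigma_A]_\infty$ and put $\psi(A) = \sigma_A$; if $x \in A$ then $(x,\sigma_A) \in R$, as required. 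By the Fact on Galois--Tukey morphisms, these two morphisms give $\norm{\Cov(\scrN^s)} = \norm{\Cov'(\scrN^s)}$ and $\norm{\Cov(\scrN^s)^\perp} = \norm{\Cov'(\scrN^s)^\perp}$, i.e.\ the two relational systems are equivalent.

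No step here is a real obstacle; the content is simply identifying the right Borel parameter set and checking the two containments above. The one subtlety worth flagging is that the second morphism uses the axiom of choice to select the $\sigma_A$ and so is not itself Borel; if a definable morphism were wanted one would keep the Borel map $\sigma \mapsto [\sigma]_\infty$ and argue only on the $\Cov'(\scrN^s)$ side, but the stated equivalence of norms does not require this. (Note the argument uses only $s>0$; the hypothesis $s<1$ merely keeps us in the range where $\scrN^s$ is a nontrivial proper ideal.)
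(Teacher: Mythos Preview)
Your proposal is correct, but the paper codes things a little differently. Instead of a single sequence $\sigma$ with $\sum_n 2^{-|\sigma(n)|s}\le 1$ and the limsup relation $x\in[\sigma]_\infty$, the paper takes $H\subset(\twototheltomega)^{\omega\times\omega}$ consisting of those $\sigma$ with $\sum_n 2^{-|\sigma(n,m)|s}\le 1/(m+1)$ for every $m$, and uses the relation $x\mathrel{\triangleleft}\sigma\iff(\forall m)(x\in\bigcup_n[\sigma(n,m)])$. That is, the paper encodes $\Haus^s(A)=0$ directly as a sequence of ever-lighter open covers, whereas you pack all such covers into one convergent series and recover them as tails; this is why you need to invoke Lemmas~\ref{hausinfty} and~\ref{ncofhausiszero} to close the loop, while the paper's coding makes both directions of the equivalence essentially immediate (and the proof there is accordingly just the definition). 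Your version has the mild bonus of reusing the $[\sigma]_\infty$ machinery already in play for the Yorioka ideals, and your observation that only $s>0$ is used is accurate.
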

	\begin{proof}
		Define $H$ and $\triangleleft$ as follows
		\begin{align*}
			H =& \{ \sigma \in (\twototheltomega)^{\omega\times\omega} : (\forall m \in \omega)(\sum_n 2^{-\sigma(n, m) \cdot s} \le 1/(m+1)) \}, \\
			x \triangleleft \sigma \iff& (\forall m)(x \in \bigcup_{n \in \omega} [\sigma(n, m)]).
		\end{align*}
		Then $\Cov'(\scrN^s) = (2^\omega, H, \triangleleft)$ suffices.
	\end{proof}
	
	\begin{proof}[Proof of Theorem \ref{nonhdzltnonnull}]
		To show item (1), fix $\dot{\sigma}$ and $p \in \P$ such that $p \forces \dot{\sigma} \in (\twototheltomega)^\omega$ and $p \forces \sum_n 2^{-s |\dot{\sigma}(n)|} \le 1/2$.
		Fix $q \le p$.
		Put $\beta(n) = \floor{(\log_2 n) / s - C_s}$ where $C_s$ is the constant from Lemma \ref{evalfrombelow}.
		Define $\dot{\tau}$ so that $p \forces \dot{\tau}(n) = \dot{\sigma}(n) \upharpoonright \beta(n)$.
		
		By the Laver property, we can take $r \le q$ and $S \in \prod_{n \in \omega} [{}^{\beta(n)} 2]^{\le n^{(1-s)/2}}$ such that $r \forces (\forall^\infty n) (\dot{\tau}(n) \in S(n))$.
		Let $X$ be the Borel code of $\bigcap_{k \in \omega} \bigcup_{n \ge k} \bigcup \{[t] : t \in S(n) \}$.
		Then we have $r \forces [\dot{\tau}]_\infty \subseteq \hat{X}$.
		
		Now we have the following:
		\begin{align*}
			\Haus^{(1+s)/2}_\infty(\hat{X}) &\le \Haus^{(1+s)/2}_\infty(\bigcup_{n \ge k} \bigcup \{ [t] : t \in S(n) \}) \\
			&\le \sum_{n \ge k} n^{(1-s)/2} (2^{-\beta(n)})^{(1+s)/2} \\
			&\le 2^{(C_s + 1) (1+s)/2} \sum_{n \ge k} n^{(1-s)/2} n^{-(1+s)/(2s)} \\
			&= 2^{(C_s + 1) (1+s)/2} \sum_{n \ge k} n^{-(1/2)(s+1/s)} \to 0 \text{ (as $k \to \infty$)}.
		\end{align*}
		We used $(1/2)(s + 1/s) > 1$ in the last equation.
		Thus $\Haus^{(1+s)/2}(\hat{X}) = 0$.
		Since $(1+s)/2 < 1$ and $\dimh(2^\omega) = 1$, we have $\hat{X} \ne 2^\omega$.
		
		Then we can take $x \in 2^\omega \setminus \hat{X}$ in $V$.
		Then by absoluteness we have also $r \forces x \in 2^\omega \setminus \hat{X}$.
		By $r \forces [\dot{\sigma}]_\infty \subset [\dot{\tau}]_\infty \subset \hat{X}$, we have $r \forces x \not \in [\dot{\sigma}]_\infty$.
		Therefore we have $\forces (\forall \sigma\in(\twototheltomega)^\omega) (\sum_n 2^{-|\sigma(n)|s} \le 1 \Rightarrow 2^\omega \cap V \not \subseteq [\dot{\sigma}]_\infty)$.
		So by Lemma \ref{ncofhausiszero}, we obtain $\forces \Haus^s(2^\omega \cap V) > 0.$
		
		For item (2), consider $\omega_2$-step countable support iteration of Mathias forcing over a model of CH.
		In this model, by the item (1) and Lemma \ref{covhdzisborel}, $\non(\scrN^s) = \aleph_1$ whereas $\non(\nul) = \aleph_2$.
		
		For item (3), use item (2) and Proposition \ref{coholderfromcantortointerval}.
		In detail, let $0 < s < d$ and put $s' = s(1+\varepsilon)/d < 1$ for some $\varepsilon > 0$.
		By Proposition \ref{holderproduct}, Proposition \ref{coholderfromcantortointerval} and Proposition \ref{biholdercantor}, there is a $(1+\varepsilon)/d$-co-H\"{o}lder map $f \from 2^\omega \to [0, 1]^d$.
		Take $A \subset 2^\omega$ such that $\abs{A} = \non(\scrN^s)$ and $\Haus^{s'}(A) > 0$.
		Then $\Haus^s(f(A)) \ge C \cdot \Haus^{s'}(A) > 0$ for some constant $C > 0$ by Proposition \ref{holderhaus}. Now we have $\abs{f(A)} \le \abs{A} = \non(\scrN^s)$, so $\non(N_{[0,1]^d}^s) \le \non(N^{s'})$.
		Thus in the same model of (2), we have $\non(\scrN_{\R^d}^s) < \non(\nul)$.
	\end{proof}
	
	\begin{rmk}
		The same consistency of Theorem \ref{nonhdzltnonnull} (2) was already proved in \cite{SHELAH2005403}.
		But the forcing posets are simpler in our work than in their work.
	\end{rmk}
	
	\section{Many different uniformity numbers of Hausdorff measure 0 ideals}
	
	In this section, we prove the following theorem.
	
	\begin{thm}\label{manydiffunifhaus}
		It is consistent with ZFC that there are $\aleph_1$ many cardinals of the form $\non(\scrN^f)$ below continuum.	
	\end{thm}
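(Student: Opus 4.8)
The plan is to transfer, through the Galois--Tukey connections of Section~3, a simultaneous separation of $\aleph_1$ many cardinal characteristics of Yorioka ideals (equivalently, of localization cardinals $\frakv^\exists_{c,h}$) to the family $\{\non(\scrN^f) : f \text{ a gauge function}\}$. The inputs from Section~3 are: Lemma~\ref{neandjh}, which gives $\scrN^{e^*} \subseteq \scrJ_h$ whenever $h$ is large enough relative to $e$; the unlabelled lemma of Section~3, which gives $\scrJ_g \subseteq \scrN^{e^*}$ whenever $e(g(i)) \ge 2i$ for all but finitely many $i$; and Lemma~\ref{yoriokaandhausideals}, which gives $\frakv^\exists_{c,h} \le \non(\scrN^{e^*})$ under a similar growth hypothesis on $e$. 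Since $\non$ is monotone under $\subseteq$, the first two inclusions sandwich $\non(\scrN^{e^*})$ between $\non(\scrJ_g)$ and $\non(\scrJ_h)$, so it suffices to produce a model in which, for $\aleph_1$ many prescribed increasing values $\seq{\kappa_\alpha : \alpha < \omega_1}$, there are gauge functions for which the two surrounding Yorioka uniformity numbers both equal $\kappa_\alpha$.

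For this I would invoke from earlier work (\cite{klausner2019different}) a model $V[G]$ of ZFC, obtained over a model of CH, together with an increasing sequence $\seq{\kappa_\alpha : \alpha < \omega_1}$ of cardinals with $\aleph_1 \le \kappa_\alpha \le \frakc$, in which $\aleph_1$ many uniformity numbers of Yorioka ideals are separated. The crucial feature to extract is not a single value but the \emph{robustness} of the construction: there is a $\ll$-increasing scale of parameters, and for each $\alpha$ a whole $\ll$-interval (a ``band'' $B_\alpha$) of increasing functions $g$ with $\non(\scrJ_g) = \kappa_\alpha$ in $V[G]$ (and, in the localization reformulation, an interval of pairs $(c,h)$ with $\frakv^\exists_{c,h} = \kappa_\alpha$). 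Reusing such a flexible model, rather than building a bespoke forcing, is what makes the rest routine.

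Now, for each $\alpha < \omega_1$, I would choose a monotone $e_\alpha \in \omega^\omega$ tending to infinity --- hence a gauge function $e_\alpha^*$ --- whose two relevant ``inverses'' land inside $B_\alpha$: explicitly, choose increasing $g_\alpha^-, g_\alpha^+ \in B_\alpha$ and arrange $e_\alpha(g_\alpha^-(i)) \ge 2i$ for all but finitely many $i$, together with $e_\alpha(l) \le \min\{n : l < g_\alpha^+(2^n)\}$ for all $l$. The unlabelled lemma of Section~3 then yields $\scrJ_{g_\alpha^-} \subseteq \scrN^{e_\alpha^*}$, and Lemma~\ref{neandjh} yields $\scrN^{e_\alpha^*} \subseteq \scrJ_{g_\alpha^+}$, so in $V[G]$
\[
\kappa_\alpha = \non(\scrJ_{g_\alpha^-}) \le \non(\scrN^{e_\alpha^*}) \le \non(\scrJ_{g_\alpha^+}) = \kappa_\alpha ,
\]
whence $\non(\scrN^{e_\alpha^*}) = \kappa_\alpha$. (If one prefers to bypass the passage between $\scrJ_g$ and $\scrI_f$, the lower bound can be read straight off Lemma~\ref{yoriokaandhausideals}, which is also why the companion model of \cite{klausner2019different} phrased in terms of localization cardinals may be more convenient.) As $\alpha$ ranges over $\omega_1$, the values $\non(\scrN^{e_\alpha^*}) = \kappa_\alpha$ are $\aleph_1$ many distinct cardinals below $\frakc$, which is the assertion of the theorem.

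The main obstacle is precisely the bookkeeping in the previous paragraph. The ``lower inverse'' of $e_\alpha$ demanded by the unlabelled lemma and the ``upper inverse'' demanded by Lemma~\ref{neandjh} grow at genuinely different rates (the latter loses a logarithm, the former only a factor of $2$ inside the argument), so a careless choice of $e_\alpha$ can drop the two inverses into different bands and collapse the separation. Making the choice work requires spelling out, from \cite{klausner2019different}, how widely the scale $\seq{g_\alpha}$ may be spread while still controlling the associated $\non$'s, and then picking each $e_\alpha$ (for instance as a suitably iterated logarithm/exponential built from the scale) so that both inverses remain in one band $B_\alpha$ and the bands for distinct $\alpha$ stay pairwise disjoint. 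Once this combinatorial arrangement is fixed, everything else is the formal Galois--Tukey manipulation already carried out in Section~3.
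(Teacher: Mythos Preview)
Your approach matches the paper's: both sandwich $\non(\scrN^{e_\alpha^*})$ between $\frakv^\exists_{c_\alpha,h_\alpha}$ and $\frakc^\forall_{a_\alpha,d_\alpha}$ via the lemmas of Section~3, and then invoke the Klausner--Mej\'ia forcing (Fact~\ref{kmfact}(2)) to pin these endpoints to prescribed values $\kappa_\alpha$. The paper packages your ``bands'' as a \emph{modified suitable} family --- the suitability conditions (S1)--(S7) together with three new conditions (MS1)--(MS3) that encode exactly the two growth hypotheses you impose on $e_\alpha$ --- and your sandwich inequality is Proposition~\ref{verysuitable1}. The ``main obstacle'' you correctly isolate is the content of Proposition~\ref{verysuitable2}: there the paper does \emph{not} reuse the specific suitable family built in \cite{klausner2019different} but rather reruns the recursive construction from scratch, interleaving the choice of $u_\alpha$ (and hence $e_\alpha = e_{u_\alpha}$) with the other parameters at every stage so that the required slack is present by design. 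That is the only caution to your plan: the off-the-shelf family from \cite{klausner2019different} is not guaranteed to leave enough room between $(c_\alpha,h_\alpha)$ and $(b_\alpha,g_\alpha)$ to absorb the logarithmic mismatch between your two ``inverses'', so fitting $e_\alpha$ after the fact may fail; the fix is exactly to rebuild the parameter family with the gauge data included, which is what the paper carries out.
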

	We modify the proof that there are consistently many different uniformity numbers of Yorioka ideals from \cite{klausner2019different}.
	
	\begin{defi}
		\begin{enumerate}
			\item For $c, h \in \omega^\omega$, define $g_{c,h} \in \omega^\omega$ by
			\[
			g_{c,h}(k) = \floor{\log_2 c(n)} \text{ (whenever $k \in J_n$)}
			\]
			where $(J_n)_{n \in \omega}$ is the interval partition with $|J_n| = h(n)$ for all $n \in \omega$.
			\item For $b, g \in \omega^\omega$, define $f_{b, g} \in \omega^\omega$ by
			\[
			f_{b, g}(k) = \sum_{l \le n} \ceil{\log_2 b(l)} \text{ (whenever $k \in I_n$)}
			\]
			where $(I_n)_{n \in \omega}$ is the interval partition with $|I_n| = g(n)$ for all $n \in \omega$.
			\item For $f \in \omega^\omega$ increasing, define $e_f \in \omega^\omega$ by
			\[
			e_f(k) = \min\{n \in \omega : k < f(2^n)\}.
			\]
			
			\item For $c, h \in \omega^\omega$ define $c^{\triangledown h} \in \omega^\omega$ by
			\[
			c^{\triangledown h}(n) = \abs{[c(n)]^{\le h(n)}}.
			\]
		\end{enumerate}
	\end{defi}
	
	\begin{defi}[{{\cite[Definition 4.1]{klausner2019different}}}]
		Two functions $(n_k^-)_{k \in \omega}, (n_k^+)_{k \in \omega}$ of natural numbers $\ge 2$ are called \textbf{bounding sequences} if 
		\begin{enumerate}
			\item[(i)] $n_k^- \cdot n_k^+ < n_{k+1}^-$ for all $k \in \omega$, and
			\item[(ii)] $\lim_{k \to \infty} \log_{n_k^-} n_k^+ = \infty$.
		\end{enumerate}
		
		Given bounding sequences $(n_k^-)_{k \in \omega}, (n_k^+)_{k \in \omega}$, a family
		$\mathcal{F} = \{ (a_\alpha, d_\alpha, b_\alpha, g_\alpha, f_\alpha, c_\alpha, h_\alpha) : \alpha \in A  \}$
		of tuples of increasing functions in $\omega^\omega$
		is called \textbf{suitable} with respect to $(n_k^-)_{k \in \omega}, (n_k^+)_{k \in \omega}$ if it satisfies the following properties for all $\alpha \in A$:
		\begin{enumerate}
			\item[(S1)] For all $k \in \omega$, we have $a_\alpha(k), d_\alpha(k), b_\alpha(k), g_\alpha(k), b_\alpha^{\triangledown g_\alpha}(k), \frac{b_\alpha(k)}{g_\alpha(k)}, h_\alpha(k), c_\alpha^{\triangledown h_\alpha}(k) \in [n_k^-, n_k^+]$.
			\item[(S2)] $h_\alpha < c_\alpha$ and $\limsup_{k \to \infty} \frac{1}{d_\alpha(k)} \log_{d_\alpha(k)} (h_\alpha(k) + 1) = \infty$.
			\item[(S3)]$b_\alpha / g_\alpha \ge d_\alpha$.
			\item[(S4)] $a_\alpha \ge b_\alpha^{\triangledown g_\alpha}$.
			\item[(S5)] There is some $l > 0$ such that $f_{b_\alpha, g_\alpha} \le^* f_\alpha \circ \pow_l$.
			\item[(S6)] $f_\alpha \ll g_{c_\alpha, h_\alpha}$.
			\item[(S7)] For all $\beta \in A$ with $\beta \ne \alpha$,
			\[
			\lim_{k \to \infty} \min \left\{ \frac{c_\beta^{\triangledown h_\beta}(k)}{d_\alpha(k)}, \frac{a_\alpha(k)}{d_\beta(k)} \right\} = 0.
			\]
		\end{enumerate}
	\end{defi}
	
	\begin{fact}[{{\cite[Section 4 and 5]{klausner2019different}}}]\label{kmfact}
		\begin{enumerate}
			\item There are bounding sequences and there is suitable family $\mathcal{F}$ of continuum size with respect to them.
			\item Assume CH and let $(\kappa_\alpha : \alpha \in A)$ be a sequence of infinite cardinals such that $|A| \le \aleph_1$ and $\kappa_\alpha^\omega = \kappa_\alpha$ for all $\alpha \in A$.
			Given a family $\mathcal{F} =  \{ (a_\alpha, d_\alpha, b_\alpha, g_\alpha, f_\alpha, c_\alpha, h_\alpha) : \alpha \in A  \}$ satisfying (S1) and (S7) with respect to some bounding  sequences, there is a forcing poset that preserves all cardinals and forces
			\[
			\mathfrak{c}^\forall_{a_\alpha,d_\alpha} \le \kappa_\alpha \le \mathfrak{v}^\exists_{c_\alpha,h_\alpha}.
			\]
			for all $\alpha \in A$.
			If the family $\mathcal{F}$ is suitable, then
			\[
			\mathfrak{v}^\exists_{c_\alpha,h_\alpha} \le \non(\scrI_{f_\alpha}) \le \mathfrak{v}^\exists_{b_\alpha,g_\alpha} \le	\mathfrak{c}^\forall_{a_\alpha,d_\alpha}
			\]
			is a ZFC theorem. Thus the forcing poset forces
			\[
			\mathfrak{v}^\exists_{c_\alpha,h_\alpha} = \non(\scrI_{f_\alpha}) = \mathfrak{v}^\exists_{b_\alpha,g_\alpha} = \mathfrak{c}^\forall_{a_\alpha,d_\alpha} = \kappa_\alpha
			\]
			for all $\alpha \in A$.
		\end{enumerate}	
	\end{fact}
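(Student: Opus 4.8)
The plan is to reproduce the two constructions of \cite{klausner2019different}; I describe the shape of each and isolate the step I expect to be the main obstacle. The statement splits into three pieces: (a) the combinatorial existence of bounding sequences together with a suitable family of continuum size (part (1)); (b) the ZFC chain $\mathfrak{v}^\exists_{c_\alpha,h_\alpha}\le\non(\scrI_{f_\alpha})\le\mathfrak{v}^\exists_{b_\alpha,g_\alpha}\le\mathfrak{c}^\forall_{a_\alpha,d_\alpha}$; and (c) a forcing poset preserving all cardinals and forcing $\mathfrak{c}^\forall_{a_\alpha,d_\alpha}\le\kappa_\alpha\le\mathfrak{v}^\exists_{c_\alpha,h_\alpha}$ for every $\alpha\in A$. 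Granting (b) and (c), the displayed equalities of part (2) are immediate, so the substance is in (a) and (c).

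For (a), bounding sequences are produced explicitly: put $n_0^-=2$ and recursively $n_k^+=(n_k^-)^{k+2}$, $n_{k+1}^-=n_k^-\cdot n_k^++1$; then (i) holds by construction and $\log_{n_k^-}n_k^+=k+2\to\infty$, giving (ii). The point of such a choice is that the window $[n_k^-,n_k^+]$ has logarithmic width tending to infinity, leaving room to place all eight quantities of (S1) inside it while meeting the in-tuple demands (S2)--(S6). Building the suitable family of continuum size is the combinatorial heart of part (1): one fixes an independent-like structure on $\omega$ of size $\mathfrak{c}$ and uses it to decide, coordinate by coordinate, whether $d_\alpha(k),a_\alpha(k),c_\alpha^{\triangledown h_\alpha}(k)$ sit near $n_k^-$ or near $n_k^+$, arranged so that for every pair $\alpha\ne\beta$ and every large $k$ one of the two ratios in (S7) is tiny --- it is a $\lim$, not merely a $\liminf$, which is what makes the bookkeeping delicate. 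The remaining functions $g_\alpha,h_\alpha,b_\alpha,f_\alpha,c_\alpha$ are then chosen with enough slack to satisfy (S2)--(S6): $\log_{d_\alpha(k)}(h_\alpha(k)+1)$ unbounded along a subsequence for (S2), size comparisons for (S3)--(S4), and suitable interval partitions for (S5)--(S6) so that the derived functions $f_{b_\alpha,g_\alpha}$ and $g_{c_\alpha,h_\alpha}$ have the required shape. This is \cite[Section 4]{klausner2019different}.

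For (b), each inequality is a Galois--Tukey morphism, in the spirit of Lemma \ref{yoriokaandhausideals}. For $\mathfrak{v}^\exists_{c_\alpha,h_\alpha}\le\non(\scrI_{f_\alpha})$: code each slalom $S\in S(c_\alpha,h_\alpha)$ as a $\sigma_S\in(\twototheltomega)^\omega$ of height $g_{c_\alpha,h_\alpha}$, so that (S6) yields $[\sigma_S]_\infty\in\scrJ_{g_{c_\alpha,h_\alpha}}\subseteq\scrI_{f_\alpha}$, and attach the coding of reals exactly as in the proof of Lemma \ref{yoriokaandhausideals}; this gives a morphism $\Cov(\scrI_{f_\alpha})\to\wLc(c_\alpha,h_\alpha)$. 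For $\non(\scrI_{f_\alpha})\le\mathfrak{v}^\exists_{b_\alpha,g_\alpha}$ one builds a morphism $\wLc(b_\alpha,g_\alpha)\to\Cov(\scrI_{f_\alpha})$: encode $x\in\prod b_\alpha$ into a real $\varphi(x)\in2^\omega$ recording the entries of $x$, and use (S5) to arrange that any $\sigma$ of height $\gg f_\alpha$ witnessing membership in $\scrI_{f_\alpha}$ decodes to a slalom in $S(b_\alpha,g_\alpha)$ that $\in^\infty$-catches the relevant reals. Finally $\mathfrak{v}^\exists_{b_\alpha,g_\alpha}\le\mathfrak{c}^\forall_{a_\alpha,d_\alpha}$ is a ZFC inequality between an anti-localization uniformity and a localization covering number: using $a_\alpha\ge b_\alpha^{\triangledown g_\alpha}$ (S4), encode a $(b_\alpha,g_\alpha)$-slalom as a real in $\prod a_\alpha$ via an injection $[b_\alpha(k)]^{\le g_\alpha(k)}\hookrightarrow a_\alpha(k)$; if an $(a_\alpha,d_\alpha)$-slalom $\in^*$-catches that real then at large $k$ it confines the $k$-th coordinate of the original slalom to at most $d_\alpha(k)$ subsets, whose union has fewer than $b_\alpha(k)$ elements by (S3), so a value outside that union gives a real escaping the original slalom. ((S2) serves here only as a nondegeneracy condition and is reused in (c).)

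For (c), the forcing is a finite-support product $\mathbb{P}=\prod_{\alpha\in A}\mathbb{P}_\alpha$ in which each $\mathbb{P}_\alpha$ is itself a product of $\kappa_\alpha$-many copies of a localization poset adding a slalom in $S(a_\alpha,d_\alpha)$ containing all ground-model reals of $\prod a_\alpha$; the $\kappa_\alpha$ generic slaloms witness $\mathfrak{c}^\forall_{a_\alpha,d_\alpha}\le\kappa_\alpha$, cardinals are preserved by a linkedness (ccc-type) argument, and CH together with $\kappa_\alpha^\omega=\kappa_\alpha$ keeps the bookkeeping clean and, via (b), pins the value to exactly $\kappa_\alpha$. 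The delicate half --- and the step I expect to be the main obstacle --- is $\mathfrak{v}^\exists_{c_\alpha,h_\alpha}\ge\kappa_\alpha$: every set of fewer than $\kappa_\alpha$ reals of $\prod c_\alpha$ in the extension must be $\in^\infty$-caught by a single slalom in $S(c_\alpha,h_\alpha)$. This is proved by an ultrafilter-limit ($\mathbb{FAM}$-linkedness) argument in the tradition of Goldstern--Shelah and Kamo--Osuga: one shows that $\mathbb{P}$ enjoys a strong finite-support-linkedness property (a $(c_\alpha,h_\alpha)$-goodness admitting Boolean-ultrapower limits of conditions, inherited by finite-support products) and assembles a master slalom coordinate by coordinate. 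Here (S1) supplies the window $[n_k^-,n_k^+]$ with $\log_{n_k^-}n_k^+\to\infty$ needed for the limit slalom to fit the $h_\alpha$-budget, (S2) is the matching non-triviality of the localization factors, and (S7) controls the cross-interaction: a real added by a factor $\mathbb{P}_\beta$ with $\beta\ne\alpha$ must still be $(c_\alpha,h_\alpha)$-catchable, which works precisely because $\min\{c_\beta^{\triangledown h_\beta}(k)/d_\alpha(k),\,a_\alpha(k)/d_\beta(k)\}\to0$. Verifying this goodness property and its preservation under the product is the technical core of \cite[Section 5]{klausner2019different}; everything else assembles from (a), (b) and routine forcing bookkeeping.
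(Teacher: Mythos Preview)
The paper does not prove this statement; it is recorded as a \textbf{Fact} citing \cite[Sections 4 and 5]{klausner2019different}, with no argument supplied, so there is no in-paper proof to compare your proposal against. Your outline is a reasonable summary of the Klausner--Mej\'ia argument, correctly separating the three ingredients (the combinatorial construction of a continuum-sized suitable family, the Galois--Tukey morphisms giving the ZFC chain, and the forcing construction with its ultrafilter-limit/linkedness preservation) and correctly flagging the lower bound $\mathfrak{v}^\exists_{c_\alpha,h_\alpha}\ge\kappa_\alpha$ as the technical heart.

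One inaccuracy worth noting: the forcing in \cite{klausner2019different} is a finite-support \emph{iteration} with bookkeeping over all $\alpha\in A$, not a product $\P=\prod_{\alpha\in A}\P_\alpha$ of products. The distinction matters for the upper bound $\mathfrak{c}^\forall_{a_\alpha,d_\alpha}\le\kappa_\alpha$: a product of $\kappa_\alpha$ copies of the localization poset only guarantees slaloms localizing ground-model reals, whereas in the extension one must localize all reals of $\prod a_\alpha$, including those added by other factors; the iteration structure (each generic slalom localizes everything added so far) is what delivers this. Also, for part (a), the construction of the suitable family in \cite{klausner2019different} (and in the modified version carried out in Proposition \ref{verysuitable2} of this paper) proceeds by a tree of finite approximations indexed by $2^{<\omega}$, with (S7) verified via the lexicographic order on $2^\omega$, rather than via an ``independent-like structure'' as you describe; your description is not wrong in spirit, but the actual mechanism is more concrete.
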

	
	\begin{defi}
		Given bounding sequences $(n_k^-)_{k \in \omega}, (n_k^+)_{k \in \omega}$, a family
		$\mathcal{F} = \{ (a_\alpha, d_\alpha, b_\alpha, g_\alpha, c_\alpha, h_\alpha, e_\alpha, u_\alpha) : \alpha \in A  \}$
		of tuples of increasing functions in $\omega^\omega$
		is called \textbf{modified suitable} with respect to $(n_k^-)_{k \in \omega}, (n_k^+)_{k \in \omega}$ if it satisfies (S1), (S2), (S3), (S4), (S7) and the following (MS1), (MS2) and (MS3) for all $\alpha \in A$:
		\begin{enumerate}
			\item[(MS1)] $e_\alpha = e_{u_\alpha}$.
			\item[(MS2)] $e_\alpha(g_{c_\alpha, h_\alpha}(k)) \ge 2 \log_2 k$ for all $k \in \omega$.
			\item[(MS3)] $f_{b_\alpha,g_\alpha} \le u_\alpha$.
		\end{enumerate}
	\end{defi}
	
	\begin{prop}\label{verysuitable1}
		For a modified suitable family $\mathcal{F} = \{ (a_\alpha, d_\alpha, b_\alpha, g_\alpha, c_\alpha, h_\alpha, e_\alpha, u_\alpha) : \alpha \in A  \}$, we have
		\[
		\mathfrak{v}^\exists_{c_\alpha,h_\alpha} \le \non(\scrN^{e_\alpha^*}) \le \non(\scrJ_{u_\alpha}) \le \mathfrak{v}^\exists_{b_\alpha,g_\alpha} \le \mathfrak{c}^\forall_{a_\alpha,e_\alpha}.
		\]
	\end{prop}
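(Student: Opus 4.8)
The plan is to fix $\alpha \in A$ and prove the four inequalities of the chain one by one, each reducing to an ideal inclusion from Section~3 or to a cited combinatorial estimate. For the leftmost inequality $\frakv^\exists_{c_\alpha,h_\alpha} \le \non(\scrN^{e_\alpha^*})$: by (MS1) $e_\alpha = e_{u_\alpha}$, which is nondecreasing and tends to $\infty$ since $u_\alpha$ is increasing, so $e_\alpha^*$ is a legitimate gauge function; and (MS2) is exactly the hypothesis $e_\alpha(g_{c_\alpha,h_\alpha}(k)) \ge 2\log_2 k$ of Lemma~\ref{yoriokaandhausideals} for $(e,c,h)=(e_\alpha,c_\alpha,h_\alpha)$ (the two definitions of the notation $g_{c,h}$ agree). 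Lemma~\ref{yoriokaandhausideals} then delivers $\frakv^\exists_{c_\alpha,h_\alpha} \le \non(\scrN^{e_\alpha^*})$ directly.

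For $\non(\scrN^{e_\alpha^*}) \le \non(\scrJ_{u_\alpha})$, I would prove the inclusion $\scrN^{e_\alpha^*} \subseteq \scrJ_{u_\alpha}$ and use that $\non$ is monotone under $\subseteq$. By definition $e_{u_\alpha}(l) = \min\{n : l < u_\alpha(2^n)\}$, so (MS1) gives $e_\alpha(l) \le \min\{n : l < u_\alpha(2^n)\}$ (with equality) for every $l$; since $e_\alpha$ and $u_\alpha$ are nondecreasing, Lemma~\ref{neandjh} applies with $(e,h)=(e_\alpha,u_\alpha)$ and yields $\scrN^{e_\alpha^*} \subseteq \scrJ_{u_\alpha}$.

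For $\non(\scrJ_{u_\alpha}) \le \frakv^\exists_{b_\alpha,g_\alpha}$, first note that $\scrJ$ is antitone in its parameter: if $g \ge g'$ pointwise and $A \subseteq [\sigma]_\infty$ with $\height\sigma = g$, then $\sigma'(n) := \sigma(n)\upharpoonright g'(n)$ has $\height\sigma' = g'$ and $[\sigma]_\infty \subseteq [\sigma']_\infty$, so $\scrJ_g \subseteq \scrJ_{g'}$. Hence (MS3), i.e.\ $f_{b_\alpha,g_\alpha} \le u_\alpha$, gives $\scrJ_{u_\alpha} \subseteq \scrJ_{f_{b_\alpha,g_\alpha}}$, so $\non(\scrJ_{u_\alpha}) \le \non(\scrJ_{f_{b_\alpha,g_\alpha}})$; and $\non(\scrJ_{f_{b,g}}) \le \frakv^\exists_{b,g}$ is the Klausner--Mej\'ia estimate realised by a Galois--Tukey morphism $\wLc(b,g) \to \Cov(\scrJ_{f_{b,g}})$ (\cite{klausner2019different}). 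This appeal to the external $\scrJ_{f_{b,g}}$-versus-$\wLc(b,g)$ connection is the only ingredient that is not pure bookkeeping, and I expect it to be the step that most needs care when matching up notations.

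For the rightmost inequality $\frakv^\exists_{b_\alpha,g_\alpha} \le \frakc^\forall_{a_\alpha,e_\alpha}$, recall that $\frakv^\exists_{b,g} \le \frakc^\forall_{a,d}$ holds whenever $a \ge b^{\triangledown g}$ and $b/g \ge d$; by inspection this part of the Klausner--Mej\'ia machinery uses only (S1), (S3), (S4), so it gives $\frakv^\exists_{b_\alpha,g_\alpha} \le \frakc^\forall_{a_\alpha,d_\alpha}$. It then remains to show $\frakc^\forall_{a_\alpha,d_\alpha} \le \frakc^\forall_{a_\alpha,e_\alpha}$, which follows from $e_\alpha \le^* d_\alpha$: every $e_\alpha$-slalom is a $d_\alpha$-slalom on all but finitely many coordinates, and truncating those coordinates preserves $\in^*$-catching, so a covering family of $e_\alpha$-slaloms converts into one of $d_\alpha$-slaloms of the same size. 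Finally $e_\alpha \le^* d_\alpha$ is a size estimate: $u_\alpha$ increasing forces $u_\alpha(2^n) \ge 2^n$, whence $e_\alpha(k) = e_{u_\alpha}(k) \le \ceil{\log_2(k+1)}$, while (S1) together with the first clause of the bounding sequences gives $d_\alpha(k) \ge n_k^- \ge 2^{k+1}$. Concatenating the four inequalities proves the proposition.
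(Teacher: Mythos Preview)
Your argument is correct and follows the same four-step route as the paper: Lemma~\ref{yoriokaandhausideals} via (MS2), Lemma~\ref{neandjh} via (MS1), the Klausner--Mej\'ia estimate \cite[Lemma~2.5]{klausner2019different} via (MS3), and \cite[Lemma~2.6]{klausner2019different} via (S3)--(S4). The only divergence concerns the final term $\frakc^\forall_{a_\alpha,e_\alpha}$: the paper's own proof invokes only (S3), (S4) and \cite[Lemma~2.6]{klausner2019different}, which yield $\frakv^\exists_{b_\alpha,g_\alpha} \le \frakc^\forall_{a_\alpha,d_\alpha}$ rather than $\frakc^\forall_{a_\alpha,e_\alpha}$, so the $e_\alpha$ in the displayed chain is almost certainly a typo for $d_\alpha$ (this is also what is needed to combine with Fact~\ref{kmfact}(2)). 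You took the printed statement at face value and supplied the missing monotonicity step $\frakc^\forall_{a_\alpha,d_\alpha} \le \frakc^\forall_{a_\alpha,e_\alpha}$ via $e_\alpha \le^* d_\alpha$; your growth estimates ($e_\alpha(k) \le \lceil\log_2(k+1)\rceil$ from strict monotonicity of $u_\alpha$, and $d_\alpha(k) \ge n_k^- \ge 2^{k+1}$ from condition~(i) on bounding sequences) are sound, so your version in fact proves a slightly stronger statement than the paper's proof does.
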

	\begin{proof}
		$\mathfrak{v}^\exists_{c_\alpha,h_\alpha} \le \non(\scrN^{e_\alpha^*})$ follows from (MS2) and Lemma \ref{yoriokaandhausideals}.
		$\non(\scrN^{e_\alpha^*}) \le \non(\scrJ_{u_\alpha})$ follows from (MS1) and Lemma \ref{neandjh}.
		$\non(\scrJ_{u_\alpha}) \le \mathfrak{v}^\exists_{b_\alpha,g_\alpha}$ follows from (MS3) and \cite[Lemma 2.5]{klausner2019different}.
		$\mathfrak{v}^\exists_{b_\alpha,g_\alpha} \le \mathfrak{c}^\forall_{a_\alpha,e_\alpha}$ follows from (S3), (S4) and \cite[Lemma 2.6]{klausner2019different}.
	\end{proof}
	
	\begin{prop}\label{verysuitable2}
		There are bounding sequences and there is a modified suitable family $\mathcal{F}$ of continuum size with respect to them.
	\end{prop}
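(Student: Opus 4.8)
The plan is to bootstrap from Fact \ref{kmfact}(1). That fact hands us bounding sequences $(n_k^-)_{k \in \omega}, (n_k^+)_{k \in \omega}$ together with a suitable family $\mathcal{F}_0 = \{(a_\alpha, d_\alpha, b_\alpha, g_\alpha, f_\alpha, c_\alpha, h_\alpha) : \alpha \in A\}$ with $\abs{A} = \frakc$. I will keep the same bounding sequences and the same six functions $a_\alpha, d_\alpha, b_\alpha, g_\alpha, c_\alpha, h_\alpha$, and only manufacture the two new components $u_\alpha$ and $e_\alpha$. The point to notice first is that, among the conditions defining ``modified suitable'', the five inherited ones (S1), (S2), (S3), (S4), (S7) mention only $a_\alpha, d_\alpha, b_\alpha, g_\alpha, c_\alpha, h_\alpha$ and the derived quantities $b_\alpha^{\triangledown g_\alpha}$, $b_\alpha/g_\alpha$, $c_\alpha^{\triangledown h_\alpha}$ --- not $f_\alpha$, $u_\alpha$ or $e_\alpha$ --- so they transfer verbatim to the new family, and $u_\alpha, e_\alpha$ are subject to no constraint beyond (MS1), (MS2), (MS3) and being nondecreasing and unbounded. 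The size stays $\frakc$ since $A$ is unchanged.

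For the construction: by (S5) fix $l_\alpha > 0$ with $f_{b_\alpha, g_\alpha} \le^* f_\alpha \circ \pow_{l_\alpha}$. Applying (S6), i.e.\ $f_\alpha \ll g_{c_\alpha, h_\alpha}$, with exponent $2 l_\alpha$, and composing with $\pow_2$ on the right (which preserves $\le^*$), gives the chain
\[
f_{b_\alpha, g_\alpha} \circ \pow_2 \le^* f_\alpha \circ \pow_{l_\alpha} \circ \pow_2 = f_\alpha \circ \pow_{2 l_\alpha} \le^* g_{c_\alpha, h_\alpha}.
\]
I then let $u_\alpha$ be a nondecreasing unbounded function agreeing with $f_\alpha \circ \pow_{l_\alpha}$ above a suitable finite threshold and chosen below it (as the running maximum of $f_{b_\alpha, g_\alpha}$) so that simultaneously $u_\alpha \ge f_{b_\alpha, g_\alpha}$ everywhere --- which is (MS3) --- and $u_\alpha \circ \pow_2 \le g_{c_\alpha, h_\alpha}$ everywhere. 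Off a finite set this last inequality is exactly $f_\alpha \circ \pow_{2 l_\alpha} \le g_{c_\alpha, h_\alpha}$, which holds by the chain; on the finitely many remaining arguments one checks that the window $[\,f_{b_\alpha, g_\alpha}(j),\ g_{c_\alpha, h_\alpha}(\ceil{\sqrt j})\,]$ is nonempty, which again follows from the chain once the threshold is placed beyond where $f_{b_\alpha, g_\alpha} \circ \pow_2 \le g_{c_\alpha, h_\alpha}$. Finally set $e_\alpha := e_{u_\alpha}$, which is (MS1) by definition; note $e_\alpha$ is nondecreasing and tends to infinity since every $u_\alpha(2^n)$ is finite.

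It remains to verify (MS2): $e_\alpha(g_{c_\alpha, h_\alpha}(k)) \ge 2 \log_2 k$ for all $k$. For $k \le 1$ the right side is $\le 0$, so there is nothing to prove. For $k \ge 2$ put $m = \ceil{2 \log_2 k}$. Since $u_\alpha$ is nondecreasing, $e_{u_\alpha}(j) \ge m$ holds whenever $j \ge u_\alpha(2^{m-1})$; and $m - 1 \le 2 \log_2 k$ gives $2^{m-1} \le k^2$, whence
\[
u_\alpha(2^{m-1}) \le u_\alpha(k^2) = (u_\alpha \circ \pow_2)(k) \le g_{c_\alpha, h_\alpha}(k).
\]
Therefore $e_\alpha(g_{c_\alpha, h_\alpha}(k)) = e_{u_\alpha}(g_{c_\alpha, h_\alpha}(k)) \ge m \ge 2 \log_2 k$, establishing (MS2). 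Thus $\mathcal{F} = \{(a_\alpha, d_\alpha, b_\alpha, g_\alpha, c_\alpha, h_\alpha, e_\alpha, u_\alpha) : \alpha \in A\}$ is a modified suitable family of continuum size with respect to the original bounding sequences.

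I expect the only genuinely delicate point to be forcing the ``for all $k$'' clause of (MS2) to coexist with the everywhere-domination in (MS3): both requirements pin down $u_\alpha$ on the same small arguments, so one must guarantee that at those finitely many $j$ the value $f_{b_\alpha, g_\alpha}(j)$ does not exceed $g_{c_\alpha, h_\alpha}(\ceil{\sqrt j})$, and this is not forced by (S1)--(S7) alone. The clean remedy is to re-run the construction behind Fact \ref{kmfact}(1) with the extra components $u_\alpha, e_\alpha$ built in and with the bounding sequences (and the initial values of the functions) chosen so that $f_{b_\alpha, g_\alpha} \circ \pow_2 \le g_{c_\alpha, h_\alpha}$ holds from the start; alternatively one observes that every later use of (MS2) --- via Lemma \ref{yoriokaandhausideals} in the proof of Proposition \ref{verysuitable1} --- only needs its eventual form, so a harmless finite modification of $e_\alpha$ suffices. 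Everything else is bookkeeping inherited from $\mathcal{F}_0$.
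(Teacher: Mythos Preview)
Your approach is genuinely different from the paper's. The paper does not bootstrap from Fact~\ref{kmfact}(1); it rebuilds the whole family from scratch by a single recursion that interleaves the definitions of $d, h, g, b, u, c, a$ and the bounding sequences $n_k^\pm$ (first for a singleton family, then over the tree $2^{<\omega}$ to reach continuum size). Crucially, in the paper $c_\alpha$ is defined \emph{after} and \emph{in terms of} $u_\alpha$ --- explicitly $c(k) = 2^{u((\max J_k)^2)-1}$ --- so that (MS2) holds for every $k$ by construction. Your bootstrap is more economical in that it recycles the six functions already supplied by Fact~\ref{kmfact}(1), and both your derivation of $f_{b_\alpha,g_\alpha}\circ\pow_2 \le^* g_{c_\alpha,h_\alpha}$ from (S5)+(S6) and your verification that $u_\alpha\circ\pow_2 \le g_{c_\alpha,h_\alpha}$ implies (MS2) are correct.

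The gap you flag in your last paragraph, however, is real and neither remedy you offer closes it for the proposition as literally stated. To have $u_\alpha \ge f_{b_\alpha,g_\alpha}$ everywhere \emph{and} $u_\alpha(k^2) \le g_{c_\alpha,h_\alpha}(k)$ everywhere you would need $f_{b_\alpha,g_\alpha}(k^2) \le g_{c_\alpha,h_\alpha}(k)$ for \emph{all} $k$, and (S5)+(S6) give this only eventually; the black-box family from Fact~\ref{kmfact}(1) carries no control on small arguments. Your second remedy --- that Lemma~\ref{yoriokaandhausideals} only needs (MS2) for all but finitely many $k$ --- is true and does suffice for Theorem~\ref{manydiffunifhaus}, but it proves a weaker statement than the one written (and ``finitely modify $e_\alpha$'' is not harmless: it breaks (MS1), since $e_\alpha$ is rigidly $e_{u_\alpha}$, and decreasing $u_\alpha$ to compensate re-threatens (MS3)). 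Your first remedy --- re-run the construction with $u_\alpha, e_\alpha$ built in and $c_\alpha$ chosen afterwards --- is precisely the paper's route, at which point the bootstrap has bought nothing. So either relax (MS2) to $\forall^\infty k$ and keep your argument, or carry out the from-scratch recursion.
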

	\begin{proof}
		First we build bounding sequences $(n_k^-)_{k \in \omega}, (n_k^+)_{k \in \omega}$ and a modified suitable family $\mathcal{F} = \{ (a, d, b, g, c, h, e, u) \}$ of size $1$ by recursion.
		Let $n_0^- = 2$ and $d(0) = 3$.
		Let $\seq{I_n : n \in \omega}$ and $\seq{J_n : n \in \omega}$ be interval partitions with $\abs{I_n} = g(n)$ and $\abs{J_n} = h(n)$.
		Define the component of $\mathcal{F}$ in the following order:
		\begin{enumerate}
			\item $h(k) = d(k)^{(k+1) d(k)}$,
			\item $g(k) = \max \{ (\max J_k)^2 - \min I_k + 1, h(k) + 1 \}$,
			\item $b(k) = 2^{g(k) + d(k)}$,
			\item $u(j) = \sum_{l \le k} \log_2 b(l) + j - \min I_k$ for $j \in I_k$,
			\item $c(k) = 2^{u((\max J_k)^2)-1}$,
			\item $a(k) = \max \{ c^{\triangledown h}(k), b^{\triangledown g}(k) \} + 1$,
			\item $n_k^+ = a(k)$,
			\item $n_{k+1}^- = n_k^- \cdot n_k^+ + 1$ and
			\item $d(k+1) = n_{k+1}^- + 1$.
		\end{enumerate}
		
		Item (1), (3) and (6) ensures (S2), (S3) and (S4) respectively.
		
		Item (4) ensures (MS3) since
		\begin{align*}
			u(j) &= \sum_{l \le k} \log_2 b(l) + j - \min I_k \\
			&\ge \sum_{l \le k} \log_2 b(l) \\
			&= f_{b,g}(j)
		\end{align*}
		for $j \in I_k$.
		Moreover this definition ensures $u$ is strictly increasing since
		\begin{align*}
			u(\max I_{k-1}) &= \sum_{l \le k-1} \log_2 b(l) + \max I_{k-1} - \min I_{k-1} \\
			&= \sum_{l \le k-1} \log_2 b(l) + g(k-1) - 1 \\
			&< \sum_{l \le k} \log_2 b(l) \\
			&= u(\min I_k).
		\end{align*}
		
		When we are done defining $(n_k^-)_{k \in \omega}, (n_k^+)_{k \in \omega}, a, d, b, g, c, h$ and $u$, we define $e$ by
		\[
		e = e_u.
		\]
		This ensures (MS1).
		
		Item (5) ensures (MS2) since
		\[
		g_{c,h}(k) = u((\max J_k)^2)-1 \ge u(k^2)-1
		\]
		for $k \in J_n$ and
		\begin{align*}
			e_u(g_{c,h}(k)) &= \min \{ n : g_{c,h}(k) < u(2^n) \} \\
			&\ge \min \{ n : u(k^2)-1 < u(2^n) \} \\
			&= \ceil{\log_2 k^2} \\
			&\ge 2 \log_2 k.
		\end{align*}
		
		Item (2) ensures that in item (5) we will not access $u$ with an invalid index.
		In fact, from item (2) we obtain
		\[
		g(k) \ge (\max J_k)^2 - \min I_k + 1.
		\]
		So we have
		\[
		(\max J_k)^2 \le \min I_k + g(k) - 1 = \max I_k.
		\]
		Thus when we are in (5), we already defined $u((\max J_k)^2)$.
		
		The above construction ensures
		\[ n_k^- < d(k) < h(k) < g(k)  < b(k) < b^{\triangledown g}(k) < a(k) = n_k^+, \]
		\[ n_k^- < d(k) < b(k)/g(k) < b(k) < n_k^+ \]
		and
		\[ n_k^- < d(k) < h(k) < c(k) < c^{\triangledown h}(k) < a(k) = n_k^+ \]
		So (S1) holds.
		
		Now we shall show how to construct a modified suitable family $\mathcal{F} = \{ (a_\alpha, d_\alpha, b_\alpha, g_\alpha, c_\alpha, h_\alpha, e_\alpha, u_\alpha) : \alpha \in 2^\omega  \}$ of size continuum.
		We construct approximations $\seq{(a_t, d_t, b_t, g_t, c_t, h_t, u_t) : t \in 2^{<\omega} }$ and then put for $\alpha \in 2^\omega$, $a_\alpha = \bigcup_{n \in \omega} a_{\alpha \upharpoonright n}$, etc.
		
		Let $\bar{0} = \seq{0, 0, 0, \dots}, \bar{1} = \seq{1, 1, 1, \dots}$.
		Let $\triangleleft$ denote the lexicographical order of $2^\omega$ and $2^n$ for $n\in \omega$.
		By recursion on  $n \in \omega$ we define $\seq{(a_t, d_t, b_t, g_t, c_t, h_t, u_t) : t \in 2^n }$.
		
		\begin{enumerate}
			\item Let $d_{\bar{0} \upharpoonright (n+1)}(n) > d_{\bar{0} \upharpoonright (n)}(n-1) \cdot a_{\bar{1} \upharpoonright (n)}(n-1) + 2$.
			\item When $d_t(n)$ is defined, put $d_{t^+}(n) = (n+1) a_t(n)$, where $t^+$ is the successor of $t$ in $\triangleleft$.
			\item Define $h_{t^+}(n), g_{t^+}(n), \dots, a_{t^+}(n)$ as in the construction of the modified suitable family of size $1$.
		\end{enumerate}
		
		Put $n_k^- = d_{\bar{0}}(k) - 1$ and $n_k^+ = a_{\bar{1}}(k)$.
		And put $e_\alpha = e_{u_\alpha}$ for $\alpha \in 2^\omega$.
		
		We finished the construction and have to check (S7).
		It suffices that we prove for $\alpha \mathrel{\triangleleft} \beta$, $\lim_{k \to \infty} \frac{a_\alpha(k)}{d_\beta(k)} = 0$.	
		Let $n$ be the minimum number such that $\alpha(n) < \beta(n)$.
		Then by the definition of $d_\beta$, we have $d_\beta(k) \ge (k+1) a_\alpha(k)$ for any $k \ge n$.
		Thus $\frac{a_\alpha(k)}{d_\beta(k)} \to 0$ (as $k \to 0$).
	\end{proof}
	
	By Fact \ref{kmfact} (2), Propositon \ref{verysuitable1} and \ref{verysuitable2}, we have Theorem \ref{manydiffunifhaus}.
	
	\section{Many different covering numbers of Hausdorff measure 0 ideals}
	
	The following fact was proved by Kamo and Osuga in \cite[Section 3]{osuga2014many}.
	
	\begin{fact}\label{kamoosugathm}
		Let $\delta$ be an ordinal and $\seq{\lambda_\alpha : \alpha < \delta}$ be a strictly increasing sequence of regular uncountable cardinals.
		Let $\kappa \ge \delta$ be a cardinal such that $\kappa = \kappa^{<\lambda_\alpha}$ for all $\alpha < \delta$.
		Let $\seq{b_\alpha, c_\alpha : \alpha < \delta}$ be a sequence of pairs of reals in $\omega^\omega$ such that $b_\alpha >^* c_\beta^H \cdot \id$ for all $\beta < \alpha < \delta$ and $b_\alpha >^* 2 \cdot \id$ for all $\alpha < \delta$, where $H = \seq{n^{n^2} : n \in \omega}$.
		Then there is a ccc forcing poset $\P$ such that
		\begin{align*}
			\P \forces ((\forall \alpha < \delta)(\frakc^\exists_{c_\alpha,H} \le \lambda_\alpha \le \frakc^\exists_{b_\alpha,1}) \AND \frakc = \kappa).
		\end{align*}
	\end{fact}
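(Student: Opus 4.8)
The plan is to reconstruct the Kamo--Osuga forcing of \cite{osuga2014many}. Let $\P$ be the finite-support product which for each $\alpha<\delta$ carries a block of $\lambda_\alpha$ copies of a $\sigma$-centered poset $\mathbb{Q}_\alpha$ built from the triple $(b_\alpha,c_\alpha,H)$ (together with an extra factor $\mathrm{Add}(\omega,\kappa)$ of Cohen reals if $\sum_\alpha\lambda_\alpha<\kappa$). A condition of $\mathbb{Q}_\alpha$ is a pair $(s,F)$ where $s$ specifies finite initial segments of a width-$H$ slalom $\dot S\in S(c_\alpha,H)$ and of a function $\dot r\in\prod b_\alpha$, and $F$ is a finite promise set, ordered so that the generic $\dot S$ captures every ground-model member of $\prod c_\alpha$ infinitely often while the generic $\dot r$ is eventually different from every width-$1$ slalom of $\prod b_\alpha$ in the ground model. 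The hypothesis $b_\alpha>^*2\cdot\id$ is what guarantees that the stem of any condition extends arbitrarily far — at coordinate $n$ one must miss at most $\abs F$ values, and eventually $b_\alpha(n)>2n\ge\abs F$ — so $\dot r$ is a total member of $\prod b_\alpha$; the hypotheses $b_\alpha>^*c_\beta^{H}\cdot\id$ for $\beta<\alpha$ are used, as in \cite{osuga2014many}, to secure a preservation property saying that the $(c_\beta,H)$-slaloms introduced at the $\beta$-blocks live in a space too thin relative to $\prod b_\alpha$ to be combined into a cover of $\prod b_\alpha$. Being a finite-support product of $\sigma$-centered posets, $\P$ is ccc (indeed Knaster); from $\abs\P\le\kappa$ and $\kappa=\kappa^{<\lambda_0}\ge\kappa^{\aleph_0}$ the nice-name count yields $\P\forces\frakc\le\kappa$, while the $\kappa$ mutually generic reals added yield $\P\forces\frakc\ge\kappa$.

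To see $\P\forces\frakc^\exists_{c_\alpha,H}\le\lambda_\alpha$, let $\seq{\dot S_{\alpha,i}:i<\lambda_\alpha}$ be the slaloms of the $\alpha$-block. If $\dot x$ names a member of $\prod c_\alpha$ in $V^\P$, then by ccc a countable set $C$ of coordinates decides $\dot x$, and since $\lambda_\alpha$ is regular uncountable there is $i<\lambda_\alpha$ with $(\alpha,i)\notin C$; then $\dot x$ is equivalent to a name over $V_i:=V[\P\text{ restricted to all coordinates other than }(\alpha,i)]$, and by the genericity of $\dot S_{\alpha,i}$ over $V_i$ we get $(\exists^\infty n)\,\dot x(n)\in\dot S_{\alpha,i}(n)$. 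So $\{\dot S_{\alpha,i}:i<\lambda_\alpha\}$ is a $\lambda_\alpha$-sized family of slaloms that $\in^\infty$-covers $\prod c_\alpha$ in $V^\P$.

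To see $\P\forces\lambda_\alpha\le\frakc^\exists_{b_\alpha,1}$, let $\dot{\mathcal F}$ name a family of fewer than $\lambda_\alpha$ width-$1$ slaloms of $\prod b_\alpha$. Each member depends on countably many coordinates, so $\dot{\mathcal F}$ is a name over $V[\P\text{ restricted to }C]$ for some set $C$ of coordinates with $\abs C<\lambda_\alpha$; by regularity of $\lambda_\alpha$ pick $(\alpha,i_0)\notin C$ in the $\alpha$-block. Putting $V':=V[\P\text{ restricted to all coordinates other than }(\alpha,i_0)]\supseteq V[\P\text{ restricted to }C]$, we have $V^\P=V'[G_{(\alpha,i_0)}]$ with $G_{(\alpha,i_0)}$ generic over $V'$ for $\mathbb{Q}_\alpha$, and $\dot{\mathcal F}$ is evaluated in $V'$. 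Since $\{(s,F)\in\mathbb{Q}_\alpha:\psi\in F\}$ is dense for each width-$1$ slalom $\psi\in V'$, the generic real $\dot r_{(\alpha,i_0)}\in\prod b_\alpha$ is eventually different from every member of $\dot{\mathcal F}$, so $\dot{\mathcal F}$ does not $\in^\infty$-cover $\prod b_\alpha$; as $\abs{\dot{\mathcal F}}$ was an arbitrary cardinal below $\lambda_\alpha$, this gives $\P\forces\frakc^\exists_{b_\alpha,1}\ge\lambda_\alpha$. I expect the real work to be concentrated in the part glossed over by "as in \cite{osuga2014many}": designing $\mathbb{Q}_\alpha$ so that a single pair $(s,F)$ drives both the slalom and the evader with the correct density lemmas, pinning down $\abs\P\le\kappa$ from $\kappa=\kappa^{<\lambda_\alpha}$, and — most delicately — verifying the preservation property governed by $b_\alpha>^*c_\beta^H\cdot\id$, which is what one needs in order to force all of the relevant invariants to their intended exact values simultaneously rather than only the two isolated inequalities above.
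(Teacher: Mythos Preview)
The paper does not prove this statement; it records it as a fact from \cite[Section 3]{osuga2014many} and invokes it as a black box in the proof of Theorem \ref{manydiffcovhaus}. So there is no argument in the paper to compare your sketch against beyond the bare citation.

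Your sketch, however, has a real gap in the lower-bound step $\lambda_\alpha\le\frakc^\exists_{b_\alpha,1}$. You take $\P$ to be a finite-support \emph{product}, so each factor $\mathbb{Q}_\alpha$ is defined once and for all in $V$, and the promise sets $F$ occurring in conditions $(s,F)\in\mathbb{Q}_\alpha$ are finite subsets of $V$. You then assert that $\{(s,F)\in\mathbb{Q}_\alpha:\psi\in F\}$ is dense for every width-$1$ slalom $\psi\in V'$; but for $\psi\in V'\setminus V$ this set is \emph{empty}. Mutual genericity does make $G_{(\alpha,i_0)}$ generic over $V'$, yet only for the poset $\mathbb{Q}_\alpha$ as it sits inside $V$; the version of $\mathbb{Q}_\alpha$ computed in $V'$ (allowing side conditions from $V'$) is strictly larger, and your filter is not generic for it. Consequently $\dot r_{(\alpha,i_0)}$ is guaranteed to be eventually different only from width-$1$ slaloms in $V$, not from the members of $\dot{\mathcal F}\subset V'$. (By contrast, your upper-bound step $\frakc^\exists_{c_\alpha,H}\le\lambda_\alpha$ survives the product: the relation $x\in^\infty S$ is an ``infinitely often'' statement, and the sets $\{(s,F):(\exists m>n)(x(m)\in s(m))\}$ are dense in $\mathbb{Q}_\alpha$ for every $x\in V'$, without needing $x$ to appear in a side condition.)

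Kamo and Osuga avoid this by running a finite-support \emph{iteration} of length $\kappa$ with bookkeeping: the iterand at stage $\xi$ is computed in $V^{\P_\xi}$, so its side conditions may name slaloms added earlier. Given $\dot{\mathcal F}$ of size $<\lambda_\alpha$ in the final model, ccc together with $\kappa=\kappa^{<\lambda_\alpha}$ places $\dot{\mathcal F}$ inside some $V^{\P_\xi}$, and a later $\alpha$-stage adds an evader for everything there. The growth hypothesis $b_\alpha>^* c_\beta^H\cdot\id$ is used precisely in this iterative setting, to prove a preservation lemma controlling how the $\beta$-stage forcings interact with the $b_\alpha$-side; in a pure product that hypothesis would have no role at all, which is itself a warning sign that the product is not the intended construction.
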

	
	\begin{thm}\label{manydiffcovhaus}
		It is consistent with ZFC that there are $\aleph_1$ many cardinals of the form $\cov(\scrN^f)$ below continuum.
	\end{thm}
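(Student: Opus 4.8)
\emph{Proof proposal.} The plan is to run the scheme of the proof of Theorem~\ref{manydiffunifhaus}, with the forcing of Fact~\ref{kamoosugathm} in place of that of Fact~\ref{kmfact} and $\cov(\scrN^f)$ in place of $\non(\scrI_f)$. Write $H = \seq{n^{n^2} : n \in \omega}$ and let $\mathbf{1}$ denote the constant function $1$. The combinatorial core is a ZFC chain: for increasing $c, b \in \omega^\omega$, if $u$ is increasing with $f_{c, H} \le u$, $e = e_{u}$, and $b(n) \ge 2^{u(n^2)}$ for all $n$, then
\[
\frakc^\exists_{c, H} \le \cov(\scrN^{e^*}) \le \frakc^\exists_{b, \mathbf{1}}.
\]
The right-hand inequality is Lemma~\ref{yoriokaandhausideals} applied with $(b, \mathbf{1})$ in the role of $(c, h)$: the interval partition becomes the partition into singletons, so $g_{b, \mathbf{1}}(n) = \floor{\log_2 b(n)}$, and the hypothesis $e(g_{b, \mathbf{1}}(n)) \ge 2 \log_2 n$ amounts to $e_{u}(\floor{\log_2 b(n)}) \ge 2 \log_2 n$, which follows from $\floor{\log_2 b(n)} \ge u(n^2)$. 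For the left-hand inequality: since $e = e_{u}$, Lemma~\ref{neandjh} gives $\scrN^{e^*} \subset \scrJ_{u}$, hence $\cov(\scrJ_{u}) \le \cov(\scrN^{e^*})$; and since $f_{c, H} \le u$, the Galois--Tukey morphism underlying \cite[Lemma~2.5]{klausner2019different} (the one used in Proposition~\ref{verysuitable1} to bound $\non(\scrJ_{u_\alpha})$ by $\frakv^\exists_{b_\alpha, g_\alpha}$) equally yields $\frakc^\exists_{c, H} \le \cov(\scrJ_{u})$.

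For the model, start from a ground model of GCH and fix $\delta = \omega_1$, the strictly increasing sequence $\lambda_\alpha = \aleph_{\alpha+1}$ ($\alpha < \omega_1$) of regular uncountable cardinals, and $\kappa = \aleph_{\omega_1+1}$, so that $\kappa > \sup_{\alpha} \lambda_\alpha$ and $\kappa = \kappa^{<\lambda_\alpha}$ for every $\alpha$. Build increasing $c_\alpha, b_\alpha \in \omega^\omega$ by recursion on $\alpha < \omega_1$: given $c_\beta, b_\beta$ for $\beta < \alpha$, choose $c_\alpha$ increasing with $c_\alpha(n) \ge 2$ and $c_\alpha \ge^* b_\beta$ for all $\beta < \alpha$; let $u_\alpha$ be increasing with $f_{c_\alpha, H} \le u_\alpha$ (say $u_\alpha = f_{c_\alpha, H} + \id$) and put $e_\alpha = e_{u_\alpha}$; then choose $b_\alpha$ increasing with $b_\alpha >^* c_\beta^H \cdot \id$ for all $\beta < \alpha$, with $b_\alpha >^* 2 \cdot \id$, and with $b_\alpha(n) \ge 2^{u_\alpha(n^2)}$ for all $n$ -- possible since only countably many functions must be dominated. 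Apply Fact~\ref{kamoosugathm} to $\seq{b_\alpha, c_\alpha : \alpha < \omega_1}$ with these $\lambda_\alpha$ and $\kappa$, obtaining a ccc poset $\P$ with $\P \forces \frakc = \kappa$ and $\P \forces \frakc^\exists_{c_\alpha, H} \le \lambda_\alpha \le \frakc^\exists_{b_\alpha, \mathbf{1}}$ for all $\alpha < \omega_1$; as in the application of this forcing to separate covering numbers of Yorioka ideals in \cite{osuga2014many}, the extension in fact realises $\frakc^\exists_{c_\alpha, H} = \lambda_\alpha = \frakc^\exists_{b_\alpha, \mathbf{1}}$. Combined with the ZFC chain above this gives $\cov(\scrN^{e_\alpha^*}) = \lambda_\alpha$ in the extension for every $\alpha < \omega_1$, and since the $\lambda_\alpha$ are $\aleph_1$ many distinct cardinals below $\frakc = \kappa$, the theorem follows.

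The most delicate point is the final pinning: Fact~\ref{kamoosugathm} as stated only gives the two-sided bound $\frakc^\exists_{c_\alpha, H} \le \lambda_\alpha \le \frakc^\exists_{b_\alpha, \mathbf{1}}$, so to conclude $\cov(\scrN^{e_\alpha^*}) = \lambda_\alpha$ one needs that the Kamo--Osuga extension actually realises $\frakc^\exists_{c_\alpha, H} = \frakc^\exists_{b_\alpha, \mathbf{1}} = \lambda_\alpha$; this is exactly how the forcing is used in \cite{osuga2014many}, and our choice of the $c_\alpha, b_\alpha$ (with $c_\alpha$ dominating the earlier $b_\beta$ and $b_\alpha$ dominating the earlier $c_\beta^H \cdot \id$) is the ``staircase'' that makes it work. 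A second point of care is the interplay of growth conditions: $f_{c_\alpha, H} \le u_\alpha$ must coexist with $b_\alpha$ being enormous ($b_\alpha >^* c_\beta^H \cdot \id$ for $\beta < \alpha$ and $b_\alpha(n) \ge 2^{u_\alpha(n^2)}$), but at each stage only countably many $\le^*$-requirements plus one pointwise lower bound fall on $b_\alpha$, so the recursion goes through; one should also confirm that \cite[Lemma~2.5]{klausner2019different} is phrased (or can be reread) so as to supply the covering inequality $\frakc^\exists_{c, H} \le \cov(\scrJ_{u})$ and not merely its uniformity counterpart. Finally, taking $h = \mathbf{1}$ in Lemma~\ref{yoriokaandhausideals} is harmless, the interval partition simply collapsing to singletons, and any finite perturbation of $e_\alpha$ needed to turn hypotheses valid ``for cofinitely many $n$'' into ones valid ``for all $n$'' leaves $\scrN^{e_\alpha^*}$ unchanged, since $\Haus^{e_\alpha^*}$ depends only on the behaviour of $e_\alpha^*$ near $0$.
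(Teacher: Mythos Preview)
Your ZFC chain runs the wrong way. You establish
\[
\frakc^\exists_{c_\alpha,H}\ \le\ \cov(\scrN^{e_\alpha^*})\ \le\ \frakc^\exists_{b_\alpha,\mathbf 1},
\]
but Fact~\ref{kamoosugathm} gives exactly the same orientation,
\[
\frakc^\exists_{c_\alpha,H}\ \le\ \lambda_\alpha\ \le\ \frakc^\exists_{b_\alpha,\mathbf 1}.
\]
Two parallel sandwiches do not pin anything: both $\lambda_\alpha$ and $\cov(\scrN^{e_\alpha^*})$ lie in the interval $[\frakc^\exists_{c_\alpha,H},\frakc^\exists_{b_\alpha,\mathbf 1}]$, and nothing you have written forces them to coincide. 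Your attempted rescue, that the Kamo--Osuga extension ``in fact realises $\frakc^\exists_{c_\alpha,H}=\lambda_\alpha=\frakc^\exists_{b_\alpha,\mathbf 1}$'', is not what Fact~\ref{kamoosugathm} says, and in \cite{osuga2014many} that equality is \emph{derived} by combining the forcing bound with a ZFC chain going the \emph{opposite} way --- precisely the ingredient you are missing. The extra staircase condition $c_\alpha\ge^* b_\beta$ you impose does nothing for a fixed $\alpha$.

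The paper's proof swaps the roles of $(b_\alpha,\mathbf 1)$ and $(c_\alpha,H)$ in your two bounds. One chooses $b_\alpha$ first (with $b_\alpha>^*c_\beta^H\cdot\id$ for $\beta<\alpha$), then $g_\alpha$ with $g_\alpha(n)\ge\sum_{i\le n}\log_2 b_\alpha(i)$, sets $e_\alpha=e_{g_\alpha}$, and finally picks $c_\alpha$ large enough that $e_\alpha(g_{c_\alpha,H}(n))\ge 2\log_2 n$. Then \cite[Lemma~1]{osuga2014many} gives $\frakc^\exists_{b_\alpha,\mathbf 1}\le\cov(\scrJ_{g_\alpha})$, Lemma~\ref{neandjh} gives $\cov(\scrJ_{g_\alpha})\le\cov(\scrN^{e_\alpha^*})$, and Lemma~\ref{yoriokaandhausideals} with $(c_\alpha,H)$ gives $\cov(\scrN^{e_\alpha^*})\le\frakc^\exists_{c_\alpha,H}$. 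This chain closes the loop against the forcing bound, yielding $\cov(\scrN^{e_\alpha^*})=\lambda_\alpha$ without any appeal to unstated properties of the Kamo--Osuga poset. Your argument becomes correct once you reverse which of $(b_\alpha,\mathbf 1)$ and $(c_\alpha,H)$ feeds into Lemma~\ref{yoriokaandhausideals} and which feeds into the $\scrJ$-bound.
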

	\begin{proof}
		Assume GCH.
		Put $\delta = \omega_1$. Put $\lambda_\alpha = \aleph_{\alpha+1}$ for $\alpha < \omega_1$. Put $\kappa = \aleph_{\omega_1 + 1}$.
		We define $\seq{b_\alpha, g_\alpha, e_\alpha, c_\alpha : \alpha < \omega_1}$ recursively so that
		\begin{enumerate}
			\item $b_\alpha >^* 2 \cdot \id$ for all $\alpha < \omega_1$,
			\item $b_\alpha >^* c_\beta^H \cdot \id$ for all $\beta < \alpha < \omega_1$,
			\item $g_\alpha(n) \ge \sum_{i \le n} \log_2 b_\alpha(i)$,
			\item $e_\alpha(n) = \min \{ m \in \omega : n < g_\alpha(2^m) \}$ and
			\item $c_\alpha$ satisfies $e_\alpha(g_{c_\alpha, H}(n)) \ge 2 \log_2(n)$ for all $n \in \omega$ and $\alpha < \omega_1$.
		\end{enumerate}
		Then, the assumption of Fact \ref{kamoosugathm} holds.
		So we can take a ccc forcing poset $\P$ such that
		\[
		\P \forces (\forall \alpha < \omega_1)(\frakc^\exists_{c_\alpha,H} \le \lambda_\alpha \le \frakc^\exists_{b_\alpha,1}).
		\]
		But by item (3) above and \cite[Lemma 1]{osuga2014many}, we have $\frakc^\exists_{b_\alpha,1} \le \cov(\scrJ_{g_\alpha})$.
		And item (4) and Lemma \ref{neandjh} gives $\cov(\scrJ_{g_\alpha}) \le \cov(\scrN^{e_\alpha^*})$.
		Item (5) and Lemma \ref{yoriokaandhausideals} gives $\cov(\scrN^{e_\alpha^*}) \le \frakc^\exists_{c_\alpha, H}$.
		
		Therefore we have
		\[
		\P \forces (\forall \alpha < \omega_1)(\frakc^\exists_{b_\alpha,1} = \cov(\scrJ_g) = \cov(\scrN^{e_\alpha^*}) = \frakc^\exists_{c_\alpha,H} = \lambda_\alpha).
		\]
		Especially we have
		\[
		\P \forces (\forall \alpha < \omega_1)(\cov(\scrN^{e_\alpha^*}) = \lambda_\alpha).
		\]
	\end{proof}
	
	\section{Discussion}
	
	\begin{rmk}
		Combining theorems in Zapletal's book \cite{zapletal2008forcing}, it turns out that, for every $s \in (0, 1)$, it is consistent with ZFC that $\cov(\nul) < \cov(\scrN^s)$.
		In fact, assume $\mathrm{CH}$. By \cite[Theorem 4.4.2]{zapletal2008forcing}, the idealized forcing $\P$ determined from $s$-dimensional Hausdorff measure $0$ ideal is proper. Consider $\omega_2$ step countable support iteration $\P_{\omega_2}$ of $\P$.
		By \cite[Theorem 4.4.8]{zapletal2008forcing}, $\P$ adds no splitting reals, thus $\P_{\omega_2}$ also adds no splitting reals by \cite[Corollary 6.3.8]{zapletal2008forcing}. Therefore we have $\P_{\omega_2} \forces (\cov(\nul) = \aleph_1 \AND \cov(\scrN^s) = \aleph_2)$.		
	\end{rmk}
	
	\begin{problem}
		\begin{enumerate}
			\item Is it consistent that $\non(\scrI_\id) < \non(\HDZ)$?
			\item Is it consistent that $\cov(\HDZ) < \cov(\scrI_\id)$?
		\end{enumerate}
	\end{problem}
	\begin{problem}
		\begin{enumerate}
			\item Is it consistent that $\add(\nul) < \add(\HDZ)$?
			\item Is it consistent that $\cof(\HDZ) < \cof(\nul)$?
		\end{enumerate}
	\end{problem}
	
	\section*{Acknowledgement}
	
	First, the author is grateful to his supervisor Yasuo Yoshinobu.
	Also, he would like to thank Yoshito Ishiki and Takayuki Kihara, who gave him advice in proving Theorem \ref{replacingspaces} and Lemma \ref{neandjh} respectively.
	Moreover, he is grateful to Jörg Brendle, who read and commented on this paper.
	
	\nocite{*}
	\printbibliography
	
\end{document}